\title[Power Operations are Koszul]{Rings of Power Operations for
  Morava $E$-theories are 
  Koszul}
\author{Charles Rezk}
\date{ \today}
\address{Department of Mathematics \\
University of Illinois at Urbana-Champaign \\ 
Urbana, IL}
\email{rezk@math.uiuc.edu}
\numberwithin{equation}{section}
  \let\c@subsection\c@equation
\theoremstyle{plain}   
\newtheorem{prop}[subsection]{Proposition}
\newtheorem{cor}[subsection]{Corollary}
\newtheorem{lemma}[subsection]{Lemma}
\newtheorem*{main-theorem}{Main Theorem}
\theoremstyle{remark}
\newtheorem{rem}[subsection]{Remark}    
\newtheorem{exam}[subsection]{Example}
\theoremstyle{plain}
\DeclareMathOperator{\id}{id}
\DeclareMathOperator{\Cok}{Cok}
\DeclareMathOperator{\Ker}{Ker}
\newcommand{\End}{{\operatorname{End}}}
\newcommand{\Hom}{{\operatorname{Hom}}}
\newcommand{\ra}{\rightarrow}
\newcommand{\xra}{\xrightarrow}
\DeclareMathOperator{\Ext}{Ext}
\DeclareMathOperator{\Tor}{Tor}
\newcommand{\len}[1]{\lvert#1\rvert}
\newcommand{\powser}[1]{[\![#1]\!]}
\newcommand{\F}{\mathbb{F}}
\newcommand{\Z}{\mathbb{Z}}
\newcommand{\R}{\mathbb{R}}
\newcommand{\Q}{\mathbb{Q}}
\newcommand{\C}{\mathbb{C}}
\newcommand{\sm}{\wedge} 
\newcommand{\dfn}{\textbf}
\def\defeq{\overset{\mathrm{def}}=}
\begin{document}

\newcommand{\Spaces}{\mathrm{Spaces}}
\newcommand{\Spectra}{\mathrm{Spectra}}
\newcommand{\hSpectra}{h\mathrm{Spectra}}

\newcommand{\Ab}{\mathrm{Ab}}
\newcommand{\Set}{\mathrm{Set}}

\newcommand{\length}{\operatorname{length}}
\newcommand{\mesh}{\operatorname{mesh}}

\newcommand{\eet}{{\perp}}
\newcommand{\lin}{\mathcal{L}}

\newcommand{\minel}{\underline{0}}
\newcommand{\maxel}{\underline{1}}

\newcommand{\B}{\mathcal{B}}
\newcommand{\rB}{\overline{\B}}
\newcommand{\hatB}{\hat{\B}}
\newcommand{\chB}{\check{\B}}
\newcommand{\ddB}{\ddot{\B}}

\newcommand{\wt}[1]{\widetilde{#1}}
\newcommand{\ol}[1]{\overline{#1}}
\newcommand{\ul}[1]{\underline{#1}}
\newcommand{\mr}[1]{\mathrm{#1}}
\newcommand{\wh}[1]{\widehat{#1}}
\newcommand{\mc}[1]{\mathcal{#1}}

\newcommand{\bfone}{\mathbbm{1}}

\newcommand{\rank}{\operatorname{rank}}

\newcommand{\cE}[1]{E^{\sm}_{#1}}

\newcommand{\Mod}[1]{\mathrm{Mod}_{#1}}
\newcommand{\Alg}[1]{{\mathrm{Alg}_{#1}}}

\newcommand{\algapprox}{\mathbb{T}}
\newcommand{\talgapprox}{\widetilde{\mathbb{T}}}
\newcommand{\Pfree}{\mathbb{P}}
\newcommand{\gralgcat}{{\mathrm{Alg}^*_{\algapprox}}}

\newcommand{\Si}{\Sigma^\infty}
\newcommand{\Sip}{\Sigma^\infty_+}

\newcommand{\Map}{\operatorname{Map}}

\newcommand{\LKan}{\operatorname{LKan}}

\begin{abstract}
We show that the ring of power operations for any Morava $E$-theory
is Koszul.  
\end{abstract}

\maketitle


\section{Introduction}

\subsection{Power operations for commutative ring spectra}

Given a structured commutative ring spectrum $E$, it is an important
task to 
understand its theory of \emph{power operations}.  For this paper,
power operations
are \emph{additive} operations on the  homotopy groups of commutative
$E$-algebras 
which arise as the residue of the (non-linear) multiplicative
structure of a 
structured commutative ring spectrum.  The closest classical analogue
to power operations is the  Frobenius map on commutative rings of finite
characteristic; indeed there is a close connection between power
operations and the  Frobenius map.  The most familiar examples of power
operations are the Steenrod operations on the mod $p$ homology of a
space, which are in fact a specialization of more general operations
defined on the homotopy of a commutative $H\F_p$-algebra, where
$H\F_p$ denotes the mod $p$ Eilenberg-Mac Lane spectrum.  

\subsection{Power operations for Morava $E$-theory}

In this paper, we address the ring of power operations for a Morava
$E$-theory spectrum.  Recall that to each formal group $G$ of height
$n$ defined over a perfect field $k$ of characteristic $p$, there is
an associated \dfn{Morava $E$-theory spectrum}, an even periodic
complex orientable theory whose associated formal group is the
\emph{universal deformation} of $G$ (as constructed by Lubin and
Tate).  The coefficient ring of $E$ has the form
\[
\pi_*E = \mathbb{W}k\powser{u_1,\dots,u_{n-1}}[u^{\pm}]
\]
where $\mathbb{W}k$ is the unramified extension of $\Z_p$ with residue
field $k$, $u_1,\dots,u_{n-1}\in \pi_0E$, and $u\in \pi_2E$.  Note
that $\pi_0E$ is a complete local ring.
By a theorem of Goerss, Hopkins, and Miller
\cite{goerss-hopkins-moduli-spaces}, every Morava $E$-theory spectrum
admits the structure of a commutative $S$-algebra in an essentially
unique way.  

Fix  $E$ to be a Morava $E$-theory spectrum.  Let $\Alg{E}$ denote the
category of commutative $E$-algebras, and let $\Alg{E,K(n)}$ denote
the category of \emph{$K(n)$-local} commutative $E$-algebras.  The
\dfn{ring of (additive) $E$-theory power operations} $\Gamma$ is a
set of natural additive operations on  
$\pi_0$ of $K(n)$-local commutative $E$-algebras.  In fact, the
completion $\Gamma^{\sm}_{\mathfrak{m}}$ of $\Gamma$ at the maximal
ideal of $\pi_0E$ is precisely the 
endomorphism ring of the homotopy functor $\pi_0\colon h\Alg{E,K(n)}\ra \Ab$.
(We give the definition of $\Gamma$ in 
\ref{subsec:rings-gamma-q}, and describe its relation to additive power
operations in \eqref{subsec:aside-on-additive-ops}.)

In this paper, we will need to consider a variant of  $\Gamma$.  Given
an \emph{augmented} $K(n)$-local commutative $E$-algebra, we may
consider a set $\Delta$ of natural additive operations on the
\emph{indecomposable quotient} of $\pi_0$.  The completion
$\Delta^{\sm}_{\mathfrak{m}}$ at the maximal ideal of $\pi_0E$ is the 
endomorphism ring of the indecomposables functor $Q(\pi_0)\colon
\Alg{E,K(n)}_{/E}\ra \Ab$, i.e., it is a ring of operations acting on the ``cotangent space'' of
the homotopy of a commutative $E$-algebra at the augmentation.   There
is an isomorphism of rings $\Gamma\approx \Delta$
\eqref{cor:gamma-delta-iso-free}.

\begin{rem}
If $X$ is any space, then the spectrum $E^{X_+}$ of functions from the
suspension spectrum of $X$ to $E$ is naturally a $K(n)$-local
$E_\infty$-ring spectrum.  Thus, the cohomology group
$E^0(X)=\pi_0(E^{X_+})$ is naturally equipped with the structure of a
$\Gamma$-module, so $\Gamma$ gives rise to a family of unstable
cohomology operations for $E$.
\end{rem}

\begin{exam}
The simplest example of a Morava $E$-theory is complex $K$-theory
completed at a prime $p$, which is associated to the multiplicative
group law (of height 1) over $\F_p$.  In this case $\Gamma\approx
\Z_p[\psi^p]$, where 
$\psi^p$ is a power operation which on $K$-theory cohomology rings
gives the $p$th Adams operation.

There is a \emph{non-additive} operation $\theta^p$
which acts on $\pi_0$ of a $K(1)$-local commutative $K$-algebra, which
is characterized by the identity $\psi^p(x)=x^p+p\theta^p(x)$.
Although $\theta^p$ is not additive, a consequence of this identity is
that $\theta^p$ is additive \emph{modulo decomposables}.  It turns
out that $\Delta \approx \Z_p[\theta^p]$.
\end{exam}

\begin{rem}\label{rem:gamma-grading}
Explicitly, $\Gamma=\bigoplus_{k\geq0} \Gamma[k]$ is a graded ring
with
\[
\Gamma[k] \approx \Ker \bigl[ \cE{0}B\Sigma_{p^k} \ra
\bigoplus_{0<j<p^k} \cE{0}B(\Sigma_j\times \Sigma_{p^k-j}) \bigr],
\]
the map being induced by the transfer associated to the inclusions
$\Sigma_j\times \Sigma_{p^k-j}\subset \Sigma_{p^k}$ 
\eqref{subsec:rings-gamma-q}.   Here 
$\cE{*}(X)$ denotes the \emph{completed $E$-homology} of $X$
\eqref{subsec:completed-e-homology}.  Similarly,
$\Delta=\bigoplus_{k\geq0} \Delta[k]$ is a graded ring with
\[
\Delta[k] \approx \Cok\bigl[ \bigoplus_{0<j<p^k} \cE{0}
B(\Sigma_j\times \Sigma_{p^k-j}) \ra \cE{0}B\Sigma_{p^k}\bigr],
\]
the map being induced by the evident inclusions of groups
\eqref{subsec:rings-delta-q}. 

A key result, due to Strickland
\cite{strickland-morava-e-theory-of-symmetric} is that each
$\Gamma[k]$ and $\Delta[k]$ is a finitely generated free $E_0$ module
\eqref{prop:gamma-finite-free}.  
\end{rem}

\begin{rem}
For a Morava $E$-theory associated to a height $n$-formal group
$G/k$, its ring $\Gamma$ of power operations is entirely determined
by the formal group $G/k$.  In particular, the  category of modules
over $\Gamma$ can be 
identified with a certain category of quasi-coherent sheaves on a
stack of ``deformations of $G/k$ and isogenies which lift powers of
Frobenius''.  This identification is an observation of Ando, Hopkins,
and Strickland (it is prefigured in
\cite{ando-hopkins-strickland-h-infinity}, 
and an explicit statement
is given \cite{rezk-congruence-condition}*{Thm.\ B}).  We won't need
this identification in this paper.
\end{rem}

The ring $\Gamma$ of \emph{additive} power operations for $E$ is just one
piece of the story of its power operations, which are controlled by a certain
monad $\algapprox$ on the category of $E_*$-modules.  This monad was
introduced in \cite{rezk-congruence-condition}; we will review some of
its properties in \eqref{subsec:algebraic-functor}.

\subsection{Koszul algebras and the main theorem}

The notion of Koszul algebra was introduced by Priddy
\cite{priddy-koszul-resolutions}.  Roughly speaking, a Koszul algebra
is a graded $k$-algebra $A$ which admits a \emph{Koszul complex},
namely a functorial resolution $\mc{C}_\bullet(M) = \bigl(\cdots \ra
A\otimes_k C_n\otimes_k M \ra 
A\otimes_k C_{n-1}\otimes_k M\ra\cdots\bigr)$ of left $A$-modules, which is
``minimal'', in the sense that $k\otimes_A \mc{C}_\bullet(M)$ has
trivial differentials, and thus $C_n=\Tor^A_n(k,M)$ (if $M$ is flat
over $k$).  

Our main result is the following.

\begin{main-theorem}\label{main-theorem}
The rings $\Gamma$ and $\Delta$ of power operations for any Morava
$E$-theory, are 
Koszul with respect to their natural grading \eqref{rem:gamma-grading}.
\end{main-theorem}
A subtlety is that although $\Gamma$ contains the coefficient ring
$E_0=\pi_0 E$ of the theory $E$, this subring is not central in
$\Gamma$.  The notion of 
Koszul we will use (described in \S\ref{sec:koszul-rings}) will make
sense for such rings.   Furthermore, with this definition, it will be
a \emph{consequence} of the main theorem that the ring $\Gamma$ is
\emph{quadratic} \eqref{prop:koszul-implies-quadratic}, i.e., there is
an isomorphism 
$\Gamma \approx T_{E_0}(\Gamma[1])/(R)$ with a quotient of the
tensor algebra on the degree 1 part by relations $R\subseteq
\Gamma[2]$ contained in the degree 2 part.

Furthermore, a straightforward calculation of ranks
\eqref{prop:koszul-vanishing} 
shows that, for a Morava $E$-theory of height $n$, the objects $C_k$
satisfy $C_k=0$ when $k>n$.  That is, the ring of power operations for
Morava $E$-theory of height $n$ has a \emph{bounded} Koszul
resolution, of length $n+1$.

The proof of the main theorem is given in
\eqref{subsec:proof-of-main-thm}.

\subsection{Some consequences of the theorem}

We briefly list some  applications and consequences of this
result.  

\textit{Description of $\gamma$ at height 2.}  As noted, a consequence of our
theorem is that the ring $\Gamma$ is quadratic with respect to the
grading $\Gamma=\bigoplus \Gamma[k]$: i.e., it is generated by
$\Gamma[1]$ with relations given by the kernel of multiplication
$\Gamma[1]\otimes_{E_0} 
\Gamma[1]\ra \Gamma[2]$.  Furthermore $\Gamma[1]$ and $\Gamma[2]$,
by a theorem of Strickland
\cite{strickland-finite-subgroups-of-formal-groups}, are as
$E_0$-modules dual to rings 
which classify subgroup divisors of order $p$ and $p^2$ of the formal
group of $E$.  Thus, an explicit calculation of the structure of
$\Gamma$ can be recovered from an explicit understanding of these
subgroup rings.

As noted above, the height 1 case is well known (at every prime $\Gamma$ is a
polynomial ring on one generator).  An explicit description of
$\Gamma$ at  height 2 by the author at the prime $2$
\cite{rezk-dyer-lashof-example}, and by Yifei Zhu
at primes $3$ and $5$ \cite{zhu-power-ops-at-3}.  Furthermore, Zhu has
described a uniform
procedure for calculating the height 2 algebra at all primes
\cite{zhu-modular-equations-lubin-tate}. 

\textit{Spaces of maps between commutative $E$-algebras.}
Homological algebra associated to $\Gamma$ appears as input to
spectral sequences and obstruction theory which compute maps between
$K(n)$-local commutative ring spectra.  These tools are a special case
of the general machinery of Goerss and Hopkins
\cite{goerss-hopkins-moduli-spaces}.  

Here is an example of how our theorem aids such calculations (details
are in the preprint \cite{rezk-power-ops-structure-calculation}).  Fix
a height $n$ Morava $E$-theory.
Suppose
$R$ and $F$ are $K(n)$-local commutative 
$E$-algebras equipped with an augmentation to $E$.  Then there is a spectral sequence
\[
E_2^{s,t}\Longrightarrow \pi_{t-s}\Map(R,F)
\]
computing homotopy groups of the space of maps of augmented
$E$-algebras.  The $E_2$-term is Quillen cohomology of the 
$\algapprox$-algebra $\pi_*R$ with coefficients in $\pi_*F$.

When $\pi_*R$ is \emph{smooth} as a $\pi_*E$-algebra, a composite
functor spectral sequence an isomorphism of the $E_2$-term with 
\[
E_2^{s,t} = \Ext_{\Delta}^s( \wh{Q}(\pi_*R),
 \pi_* \ol{F}).
\]
This is Ext in graded $\Delta$-modules, where
$\wh{Q}$ denotes the completion of the indecomposables of $\pi_*R$
with respect to the maximal ideal of $E_*$, and $\ol{F}$ is the fiber
of the augmentation $F\ra E$.  

When $\pi_*R$ is smooth over $\pi_*E$, the indecomposables
$\wh{Q}(\pi_*R)$ are projective over $\pi_*E$.  Recall that $\Delta$
(the ring of operations which acts naturally on indecomposables) is
isomorphic to $\Gamma$, and hence is Koszul by \eqref{main-theorem}.
Our theorem
\eqref{main-theorem} then tells us that  $E^{s,*}_2=0$ for $s>n$, and
furthermore provides a Koszul resolution for computing the
$E_2$-term.  In particular, makes possible explicit calculations at
height 2, some of which are described in the preprint
\cite{rezk-power-ops-structure-calculation}.

\subsection{Sketch of the proof}

We briefly indicate here the structure of the proof, which occupies
the rest of the paper, and is completed in
\eqref{subsec:proof-of-main-thm}. 
We fix a Morava $E$-theory spectrum $E$ for some height $n\geq1$.  The
argument will not apply 
directly to the ring $\Gamma$ of additive power operations for $E$,
but rather to the ring $\Delta$ mentioned earlier, which is isomorphic
to $\Gamma$ as a graded ring by \eqref{cor:gamma-delta-iso-free}.

Consider the functor $\wt{C}$ \eqref{exam:free-einfty-spaces} which
associates to a space $X$ the free non-unital
$E_\infty$-algebra on $X$:
\[
\wt{C}(X) \approx \coprod_{m\geq1} X^m_{h\Sigma_m}.
\]
There is an analogous functor $\wt{D}$ \eqref{exam:free-einfty-spectra} on
spectra, given by  
\[
\wt{D}(Y) \approx \bigvee_{m\geq1} Y^{\sm m}_{h\Sigma_m},
\]
and we have that $\Sip \wt{C}\approx \wt{D}\Sip$.

In general, given a functor $F$ from an additive category to an
abelian category, we can define a linearization
\eqref{subsec:linearization-def} of $F$, by  
\[
\lin_F (X) \defeq \Cok\left[ F(p_1+p_2)-F(p_1)-F(p_2)\colon F(X\oplus
  X)  \ra F(X)\right],
\]
where $p_1,p_2\colon X\oplus X\ra X$ are the two projections.

Let $\cE{*}X$ denote the completed $E$-homology
(\eqref{subsec:completed-e-homology} of a space $X$.
Applying linearization to the composite functor $F=\cE{*}D$, with 
$X=
S^0$, we are lead to consider the cokernel of a map
\[
\cE{*}D(S^0\vee S^0) \ra \cE{*}D(S^0)
\]
The first step \eqref{prop:linearization-of-algapprox} of the proof is
to identify this cokernel (the linearization of $F$ at $S^0$)
with the underlying $E_*$-module
of an algebra $\Delta$ of power operations (see
\S\ref{sec:rings-of-power-ops}).   
There is a
decomposition $\Delta\approx \bigoplus_{k\geq0} \Delta[k]$, where
$\Delta[k]$ comes from the $m=p^k$ summand in $C$.   (We actually
state and prove this step in terms of an algebraic approximation functor
$\talgapprox$ (\S\ref{subsec:algebraic-functor}), which has the property
that $\talgapprox(\bigoplus
E_*) \approx \cE{*}\wt{D}(\bigvee S^0)$.)

Because $\wt{D}$ is a monad on the homotopy category of spectra, we may
consider the two-sided bar construction
$\B(\wt{D})=\B(\wt{D},\wt{D},\wt{D})$.  A similar 
argument \eqref{prop:linearization-of-t-bar-is-delta-bar} identifies
the cokernel of the analogous map 
\[
\cE{*}\B(\wt{D})(S^0\vee S^0) \ra \cE{*}\B(\wt{D})(S^0)
\]
(i.e., the linearization of $F=\cE{*}\B(\wt{D})$ at $S^0$)
with the two-sided bar construction
$\B(\Delta)=\B(\Delta,\Delta,\Delta)$ of the ring $\Delta$. 
(Again, our actual statement is given in terms of the bar construction
$\B(\talgapprox)$ of the algebraic approximation functor $\talgapprox$.)

What we are actually interested in is a certain quotient $\rB(\Delta)$
of $\B(\Delta)$, which is isomorphic to $\B(E_*,\Delta,E_*)$.  This
quotient admits \eqref{subsec:bar-construction-rings} a decomposition
$\bigoplus_{k\geq0} \rB(\Delta)[k]$ 
associated to the decomposition of $\Delta$.  We observe
\eqref{subsec:koszul-def} that $\Delta$ is 
Koszul if and only if the homology of $\rB(\Delta)[k]$ is concentrated
in degree $k$, for all $k\geq0$.    More precisely,  we take
this homological vanishing property  as the
\emph{definition} of being Koszul; the discussion of
\S\ref{sec:koszul-rings} explains why this is the correct definition.
In particular, we show that with this definition, if a ring is \emph{Koszul}
then it is necessarily \emph{quadratic}
\eqref{prop:koszul-implies-quadratic}.  This also means that we do not
need to first construct an admissible basis (or any basis at all) for
$\Delta$, as is typical in many proofs of the Koszul property.

To prove that $H_*\rB(\Delta)[k]$ is concentrated in degree $k$, we
look at the combinatorics of the bar construction 
$\B(\wt{C})=\B(\wt{C},\wt{C},\wt{C})$, which are governed by
partitions.  In particular, 
there is a weak equivalence of simplicial spaces
\[
\B(\wt{C})(X) \approx \coprod_{m\geq0} (P_m\times X^m)_{h\Sigma_m}
\]
where $P_m$ is  the nerve of the poset of
partitions of an $m$-element set \eqref{subsec:partition-complex}.
(The simplicial coordinate comes 
from the simplicial set $P_m$.)  Translating this into a statement
about $\rB(\Delta)$, we discover
\eqref{cor:fundamental-observation-cor} that $\rB(\Delta)[k]$ is
isomorphic 
to the cokernel of a certain map
\[
\cE{*}(\overline{P}_m\sm (S^0\vee S^0)^{\sm m})_{h\Sigma_m} \ra
\cE{*}(\overline{P}_m \sm (S^0)^{\sm m})_{h\Sigma_m}
\]
where $m=p^k$, and $\overline{P}_m$ is certain quotient of $P_m$.
This cokernel is denoted $\widetilde{Q}(\overline{P}_m)$ in the text,
where $\widetilde{Q}(Y)$ is called the  \emph{transitive $E$-homology}
\eqref{subsec:transitive-e-homology} of a  
$\Sigma_m$-space $Y$.  That is, $\widetilde{Q}(Y)$ is the 
linearization of the functor
$X\mapsto \cE{*}(\Sip Y\sm X^{\sm m})_{h\Sigma_m}$ evaluated at
$X=S^0$.

Thus, the proof is reduced to showing that the simplicial abelian
group $\widetilde{Q}(\overline{P}_m)$ has its homology concentrated in
degree $k$.  We observe \eqref{prop:iso-to-bredon}  that the homology
of this 
simplicial abelian group is precisely the \emph{Bredon homology} of
the reduced partition complex with coefficients in the Mackey functor
defined by $Q$:
\[
H_*(\wt{Q}(\ol{P}_m)) \approx H^{\Sigma_m}_*(\ol{P}_m;Q). 
\]
The result follows via an application
\eqref{prop:adl-special-case} of a theorem of
Arone, Dwyer, and Lesh \cite{arone-dwyer-lesh-bredon-partition}.

\subsection{Historical remarks}

The classical example of an algebra of power operations which is
Koszul  is the May-Dyer-Lashof algebra of power
operations in the homology of a differential graded
$E_\infty$-algebra over $\F_p$.
That this algebra is Koszul (with respect to an appropriately chosen grading)
appears to be 
well-known, though I don't know an explicit reference; it is
implicitly proved in \cite{arone-mahowald-identity-functor}*{\S3.1}. 
The proof for the prime $2$ is an application of the PBW basis theorem
of Priddy; an adjustment needs to be made to give a proof at odd
primes.
Kuhn has an elegant unpublished 
proof that this algebra is Koszul (at least at the prime $2$) which
bypasses the need to find an admissible basis.  

Power operations were first constructed for Morava $E$-theory by Ando
\cite{ando-isogenies-and-power-ops},   
using power operations for $MU$ and suitable choices of orientations.
Soon after, Hopkins and Miller were able to show that Morava
$E$-theories are $E_\infty$-ring spectra, although it took a long time
for the technical details to be worked out.  Further work by Ando,
Hopkins, and Strickland allows for a description of the ring $\Gamma$ of power
operations, in terms of the relevant part of the $E$-cohomology of
symmetric groups.  The key result here is Strickland's identification
of a quotient of $E^*B\Sigma_{p^r}$ as the ring classifying subgroups
of a formal group 
\cite{strickland-finite-subgroups-of-formal-groups},
\cite{strickland-morava-e-theory-of-symmetric}.  Some exposition of
these results is given in \cite{rezk-congruence-condition}.  

The ring of power operations for height $1$ Morava $E$-theories
amounts to the case of $p$-adic $K$-theory; this case is understood by
work of McClure \cite{mcclure-dyer-lashof-k-theory}.
The first (partial) calculation of a power operations algebra for
height $2$ was carried out by Kashiwabara
\cite{kashiwabara-k2-homology}.  What he really did is find 
a basis for the Morava $K(2)$-homology of symmetric groups.  In our
language he did this by understanding $\Gamma/(p,v_1)\Gamma$.  The ideal
$(p,v_1)\Gamma\subset \Gamma$ is not a two-sided ideal, so
$\Gamma/(p,v_1)\Gamma$ is 
not actually a ring.  Thus Kashiwabara (aware of this) only computed
a product up an indeterminacy.  His calculations nonetheless make clear that at
height $2$, the ring $\Gamma$ is a quadratic algebra, and that
$\Gamma$ has an ``admissible basis'' in terms of certain monomials in
the generators, and that the algebra should satisfy the PBW condition
of Priddy; thus $\Gamma$ is Koszul at height $2$.

Ando, Hopkins, and Strickland conjectured that there is a small
resolution (of length $n$) for modules over the ring of power
operations of a height $n$ Morava $E$-theory, for any $n$; that is, that these
rings are Koszul.  
Moreover, they
explicitly describe a model for this resolution; the description
involves the ``building complex'' of the finite subgroup schemes of a
formal group.   A brief discussion of these ideas are given in
\cite{strickland-finite-subgroups-of-formal-groups}*{\S14}.
We do not address their ``building complex''
construction in this paper; however, in \cite{rezk-modular-complexes}
we have described a version of the building complex for height $n=2$,
using elliptic curves.

I announced the theorem of this paper in a talk in Mainz in 2005.
I later realized that the proof I believed I had at that time was not
complete; I found a 
correct proof in 2008, which was posted to the arXiv in 2012.  In 2012 Kathryn Lesh pointed out to me that
her work with Arone and Dwyer \cite{arone-dwyer-lesh-bredon-partition}
led to a significant simplification of the argument, which is
incorporated in this version.

\subsection{Acknowledgments}

The ideas in the proof given here owe a great deal to the work of
Arone and Mahowald on the Goodwillie tower of the identity functor
\cite{arone-mahowald-identity-functor}.  I am of course indebted to
the work of Ando, Hopkins, and Strickland on power operations for
Morava $E$-theory, which is the foundation of the present work.
Finally, I would like to thank Greg Arone, Bill Dwyer, and Kathryn
Lesh for sharing their work on the Bredon homology of the partition
complex.  Finally, I'd like the many people who over the years have
personally educated me about the algebraic theory of power operations,
a list which includes (but is not limited to): Mike Hopkins, Neil
Strickland, Matt Ando, and Paul Goerss.

The author was supported under NSF grants DMS-0203936, DMS-0505056,
and DMS-1006054 during the course of this project.

\section{Monads and bar constructions}

In this section we set up some properties of exponential monads, of
which the key example is the algebraic approximation functor
$\algapprox$ which governs power operations of Morava $E$-theory, to
be discussed in \S\ref{sec:rings-of-power-ops}.  In particular, we
describe how a suitable grading of an exponential monad determines a
grading of its bar-complex.

\subsection{Exponential monads}
\label{subsec:exponential-monads}

Let $\mathcal{C}$ be a symmetric monoidal category with monoidal
product $\otimes$ and unit $\bfone$, and suppose also that
$\mathcal{C}$ admits finite coproducts (denoted ``$\oplus$'', with
initial object $0$), and that $\otimes$ distributes over coproducts.
For convenience, we also assume that inclusions of direct summands are
always monomorphisms in $\mathcal{C}$.
By an \dfn{exponential monad}, we mean a monad equipped with natural
isomorphisms 
\[
\upsilon \colon \bfone \ra T(0), \qquad \zeta\colon T(X)\otimes T(Y)
\ra T(X\oplus Y),
\]
where the map $\zeta$ is a natural transformation of functors
$\mathcal{C}\times \mathcal{C}\ra \mathcal{C}$, with the property that 
$(\upsilon, \zeta)$ make $T\colon \mathcal{C}^\oplus \ra
\mathcal{C}^{\otimes}$ into a strong symmetric monoidal functor.
Furthermore, we require that every $T$-algebra $(A,\phi\colon TA\ra A)$ is
naturally a commutative 
monoid object in the symmetric monoidal category $\mathcal{C}$, with
unit $\bfone\xra{\upsilon} T(0)\xra{T(0)} T(A)\xra{\psi} A$ and product
$A\otimes A \xra{\eta\otimes \eta} TA\otimes TA \xra{\zeta} T(A\oplus
A) \xra{T(\nabla)} TA \xra{\psi} A$. 

The canonical example of an exponential monad is the free commutative
algebra monad on the category of abelian groups.  The examples we need
to work with will be free $E_\infty$-algebra monads on some
homotopy category of spaces or spectra, or monads derived from such.

\subsection{Graded exponential monads}
\label{subsec:graded-exponential}

By a \dfn{graded exponential monad}, we mean an exponential monoidal
monad $T$, together with functors $T\langle m\rangle\colon
\mathcal{C}\ra \mathcal{C}$ and natural monomorphisms $\gamma_m\colon
T\langle m\rangle(X) \ra T(X)$, which fit together to give a direct
sum decomposition  
\[
(\gamma_m)\colon \bigoplus_{m\geq0} T\langle m\rangle (X) \xra{\sim} T(X)
\]
and such that there exist (necessarily unique, because the $\gamma_k$
are monomorphisms) dotted arrows in
\[\xymatrix{
{\bfone} \ar@{.>}[r]^-{\upsilon_0} \ar[dr]_{\upsilon} & {T\langle
  0\rangle(0)} \ar[d]^{\gamma_0} 
& {T\langle p\rangle(X) \otimes T\langle q\rangle(Y)} \ar@{.>}[r]^{\zeta_{p,q}}
\ar[d]_{\gamma_p\otimes \gamma_q} 
& {T\langle p+q \rangle (X\oplus Y)} \ar[d]^{\gamma_{p+q}}
\\
& {T(0)}
& {T(X)\otimes T(Y)} \ar[r]_{\zeta} & {T(X\oplus Y)}
}\]
\[\xymatrix{
{X} \ar@{.>}[r]^-{\eta_1} \ar[dr]_{\eta} & {T\langle 1\rangle (X)}
\ar[d]^{\gamma_1}
& {T\langle p\rangle T\langle q\rangle (X)} \ar@{.>}[r]^-{\mu_{p,q}}
\ar[d]_{\gamma_p \gamma_q} & {T\langle pq\rangle (X)}
\ar[d]^{\gamma_{pq}}
\\
& {T(X)}
& {TT(X)} \ar[r]_{\mu} & {T(X)}
}\]
such that $T\langle 0\rangle(0) \ra T\langle 0
\rangle (X)$ and $\eta_1\colon X\ra T\langle 1\rangle(X)$ are
isomorphisms for all objects $X$.
These conditions imply that 
\[
\bfone \ra T\langle 0\rangle (X),\qquad
(\zeta_{p,q})\colon \bigoplus_{p+q=m}
T\langle p\rangle(X) \otimes T\langle q \rangle (Y)
\ra T\langle p+q\rangle (X\oplus Y)
\]
are isomorphisms.
Furthermore, each composite $T^{\circ q}$ admits a direct sum
decomposition $T^{\circ q}\approx \bigoplus_{m\geq0} T^{\circ
  q}\langle m\rangle$, determined inductively by
\[
T^{\circ q}\langle m\rangle (X) \approx 
\bigoplus_{m=\sum_j jm_j}
\left[
\bigotimes_{j\geq0} T\langle m_j \rangle (T^{\circ (q-1)}\langle
j\rangle (X)) \right],
\]
and this decomposition is compatible with the structure maps of the
monad.  That is, each map $T^{\circ i}\circ \mu\circ T^{\circ j} \colon
T^{\circ (i+j+1)} \ra T^{\circ (i+j)}$ restricts to a coproduct of
maps $T^{\circ (i+j+1)}\langle m\rangle \ra T^{\circ (i+j)}\langle
m\rangle$, and similarly for $T^{\circ i}\circ \eta\circ T^{\circ
  j}$. 

We refer to $T^{\circ q}\langle m\rangle$ as the \dfn{weight
  $m$} part of $T^{\circ q}$.  Note that if $m=\prod_{i=1}^q m_i$,
then there is an evident map 
\[
T\langle m_1\rangle\circ \cdots \circ  T\langle m_q\rangle \ra T^{\circ q}\langle
m\rangle,
\]
which is an inclusion of a direct summand.  We say that summands of
this form have \dfn{pure weight $m$}.

\subsection{The positive part of a graded exponential monad}
\label{subsec:positive-part}

Given a graded exponential monad $T$ on $\mathcal{C}$, we write
$\wt{T}$ for the subfunctor $\wt{T}(X) = \bigoplus_{m\geq1} T\langle
m\rangle (X)$ consisting of the part in positive weight.  It is clear
that $\wt{T}$ is a monad in its own right, so that the inclusion map
$\wt{T}\ra T$ is a map of monads.  We can similarly speak of the
weight $m$ part $\wt{T}^{\circ q}\langle m\rangle$ of $\wt{T}^{\circ
  q}$,  which will be a subobject of $T^{\circ q}\langle m\rangle$.
(Note that $\wt{T}^{\circ q}\langle m\rangle$ will not generally be
equal to $T^{\circ q}\langle m\rangle$ if $q\geq2$, as the latter
contains a large number of contributions from the weight $0$ part of
$T$.)

\subsection{Bar complexes}
\label{subsec:bar-complex}

Given a monad $(T,\eta\colon I\ra T,\mu\colon T\circ T\ra T)$ on a
category $\mathcal{C}$, we will 
write $\B(T)=\B(T,T,T)$ for the two-sided bar construction for $T$;
this is an augmented simplicial object in endofunctors of
$\mathcal{C}$, with $\B_q(T) = T^{\circ (q+2)}$.  More generally, given
$M,N\colon 
\mathcal{C}\ra \mathcal{C}$ which are left and right 
modules for $T$ respectively, there is a bar construction $\B(M,T,N)$
with $\B_q(M,T,N)\approx M\circ T^{\circ q}\circ N$.

Now suppose $T$ is a graded exponential monad.  For each $q\geq0$ and
$m\geq 0$ we define
\[
\B_q(T)\langle m\rangle \defeq T^{\circ(q+2)}\langle m\rangle,
\]
using the direct sum decomposition $T^{\circ (q+2)} \approx
\bigoplus_{m\geq0} T^{\circ (q+2)}\langle m\rangle$ described above.
Thus, the simplicial endofunctor $\B(T)\approx \bigoplus_{m\geq0}
\B(T)\langle m\rangle$ admits a weight decomposition.

We may similarly consider the bar construction of the positive part
$\wt{T}$, and we similarly obtain a weight decomposition $\B(\wt{T})\approx
\bigoplus_{m\geq1} \B(\wt{T})$.

\begin{exam}\label{exam:free-einfty-spaces}
Let $O$ be an $E_\infty$-operad in spaces, and let $C$ denote the 
monad on spaces defined by $C(X) \defeq \coprod_{m\geq0} (O(m)\times
X^m)_{h\Sigma_m}$.  The functor $C$ descends to a monad on the
homotopy category $h\Spaces$ of spaces, which we also denote by
$C$.  This is a graded exponential monad in
our sense; the graded pieces are $C\langle m\rangle (X) \approx
(O(m)\times X^m)_{h\Sigma_m}$.

The corresponding bar complex admits a grading $\B(C)\approx
\coprod_{m\geq0} \B(C)\langle m\rangle$; applied to a space, we obtain
a decomposition $\B(C)(X)\approx \coprod_{m\geq0} \B(C)\langle
m\rangle (X)$ of simplicial spaces.  As we will note below
\eqref{prop:iso-bar-c-with-partition-general}, for the positive part
of $C$ there is a  natural
weak equivalence 
\[
\B(\wt{C})\langle m\rangle (X) \approx (P_m\times X^m)_{h\Sigma_m}
\]
of simplicial spaces, where $P_m$ is the partition complex
\eqref{subsec:partition-complex} on the set
of $m$ elements.
\end{exam}

\begin{exam}\label{exam:free-einfty-spectra}
The monad $D$ on the homotopy category of spectra, defined by
$D(Y)\defeq \bigvee_{m\geq0} (O(m)_+\sm Y^{\sm m})_{h\Sigma_m}$, is
similarly an exponential monad.  
\end{exam}

\begin{exam}\label{exam:free-e-algebra}
Let $\Pfree$ denote the free commutative $E$-algebra monad, defined on
the category $\Mod{E}$ of $E$-module spectra.  This functor descends
to a functor on the homotopy category $h\Mod{E}$, which we also denote
by $\Pfree$.  As such, it is a graded exponential functor, with
$\Pfree\approx \bigvee_{m\geq 0}\Pfree\langle m\rangle$, where 
$\Pfree\langle m\rangle(M) \approx (M^{\sm_E m})_{h\Sigma_m}$.  We
will typically write $\Pfree_m$ for $\Pfree\langle m\rangle$.  
\end{exam}

\subsection{Associations}
\label{subsec:associations}

Let $T$ and $T'$ be graded exponential monads on suitable categories
$\mathcal{C}$ and $\mathcal{C}'$.  An \dfn{association} from $T$ to
$T'$ is a functor $G\colon\mathcal{C}\ra \mathcal{C}'$ which is
equipped with the structure of a weak monoidal functor in two
different ways, namely as functors $C_\oplus \ra C_\oplus'$ and
$C_\otimes\ra C_\otimes'$, together with a natural map $TG\ra 
GT'$ which is compatible with all the structure.

\begin{exam}
The functor $\Sip\colon h\Spaces \ra
h\Spectra$ defines as association between $C$ and $D$.
Likewise, the functor $\Sigma\colon h\Spectra\ra h\Spectra$ defines an
association between $D$ and itself.
\end{exam}

\begin{exam}
There is an association between the monads $C$ and $\Pfree$ described
above, given by the functor $E\sm \Sip\colon h\Spaces\ra h\Mod{E}$.  
In particular: (i) $E\sm \Sip$ takes coproducts to
coproducts; (ii) $E\sm \Sip$ takes products to smash products; (iii)
there is a natural map (in fact, a weak equivalence)
\[
\Pfree(E\sm \Sip X) \ra E\sm\Sip C(X),
\]
which is compatible with the exponential structures on the monads, and
which is compatible with the gradings, in the sense that it restricts
to maps  $E\sm \Sip C\langle m\rangle (X) \ra
\Pfree\langle m\rangle (X)$.
\end{exam}

\begin{exam}
One more example of exponential monad is given in the next section,
where we describe a monad $\algapprox$ on the category $\Mod{E_*}$ of
$E_*$-modules.  There is an association between $\algapprox$ and the
monad $\Pfree$ above,  given by $\pi_*L_{K(n)}\colon h\Mod{E}\ra \Mod{E_*}$, so
that  there is a natural map
\[
\algapprox(\pi_*L_{K(n)}M)\ra \pi_*L_{K(n)}\Pfree(M). 
\]
\end{exam}

\section{Rings of power operations}
\label{sec:rings-of-power-ops}

The homotopy groups of a $K(n)$-local commutative $E$-algebra spectrum
are naturally algebras over a certain monad $\algapprox$, which
captures algebraically the $E$-homology of symmetric groups.
In this section, we recall from \cite{rezk-congruence-condition}
properties of the monad $\algapprox$. 
From this monad, we will extract two kinds of graded rings of ``power
operations''; the ring $\Gamma^q$, which is a ring of
\emph{additive} operations on $\pi_{-q}$ of a $K(n)$-local commutative
$E$-algebra, and the ring $\Delta^q$, which is a ring of operations
on the degree $-q$ part of the  cotangent space of an augmented
algebra.  
The main result of 
this section is that all of the rings in question are isomorphic.  It
is the ring $\Delta\defeq \Delta^0$ which we will 
explicitly show is Koszul in subsequent sections.

\subsection{Introduction}

Before diving in to the case of Morava $E$-theory, it may be useful to
consider things in a more  general context.  

Fix an arbitrary structured commutative ring spectrum $E$, and
the category $\Alg{E}$ of commutative $E$-algebra spectra.  Consider
the problem of determining all \emph{natural 
  operations} on the homotopy groups of commutative $E$-algebras.
That is, for all $i,j\in \Z$ we would like to compute the set of
natural transformations
\[
\pi_i(-) \ra \pi_j(-) \quad \text{of functors $h\Alg{E}\ra \Set$.}
\]
We might instead restrict to the case of \emph{additive} operations,
i.e., natural transformations of functors $h\Alg{E}\ra \Ab$. 

Let us consider how this works out  in the case that $E=H\F_p$, the
mod $p$ Eilenberg MacLane spectrum.\footnote{This is inspired by the
treatments in \cite{may-general-algebraic-approach} and  in
\cite{bmms-h-infinity-ring-spectra} (especially \S~ 
IX.2), though not identical to what those authors write, since after
all the foundations of 
structured module spectra were not available at that time.}  
\begin{itemize}
\item Commutative $H\F_p$-algebras are algebras for the free symmetric
  $H\F_p$-algebra monad $\Pfree$ on $\Mod{H\F_p}$, which as described above
  \eqref{exam:free-e-algebra} is an example of an exponential monad.

\item
  This monad  descends to an exponential monad on the   homotopy category
  $h\Mod{H\F_p}$ of modules, which we also denote by $\Pfree$.  The
  underlying object in $h\Mod{H\F_p}$ of a 
  commutative $H\F_p$-algebra spectrum is naturally an algebra for
  $\Pfree$ on $h\Mod{H\F_p}$ (this kind of structure is called an 
  $H_\infty$-$H\F_p$-algebra structure).

\item Taking homotopy groups defines  a symmetric monoidal
  equivalence $\pi_*\colon   h\Mod{H\F_p}\xra{\sim}  \Mod{\F_p}^*$ to
  the category of graded $\F_p$-vector spaces.  

\item Thus the monad $\Pfree$ on $h\Mod{H\F_p}$ descends to a monad
  $\algapprox$ on $\Mod{\F_P}^*$, which we regard as an ``algebraic
  approximation'' to $\Pfree$.  It is a very good approximation
  indeed, as by construction it comes with  a monoidal natural isomorphism
\[
\alpha \colon \algapprox(\pi_* M) \xra{\sim}  \pi_*\Pfree(M)
\]
of functors $h\Mod{H\F_P}\ra \Mod{\F_P}^*$, which is an example of an
association between the monads $\Pfree$ and $\algapprox$ as defined in
\eqref{subsec:associations}.  Furthermore, $\algapprox$ inherits from
$\Pfree$ the structure of an exponential monad.

\item The object $\algapprox(\F_p[i])\approx \pi_*\Pfree(\Sigma^i
  H\F_p)$ is the free $\algapprox$-algebra on one generator in degree
  $i$.   (Here we write $V[i]$ for the shifted graded vector space,
  with $V[i]_j=V_{-i+j}$.)  It is a formal consequence that the set of natural
  transformations $\pi_i(-)\ra \pi_j(-)$ of functors $h\Alg{H\F_p}\ra
  \Set$ is equal to 
\[
\pi_j \Pfree(\Sigma^iH\F_p) \approx \bigl( \algapprox(\F_p[i])\bigr)_j,
\]
the degree $j$ part of the free algebra on a generator in degree $i$.

\item Addive operations, i.e., natural transformations $\pi_i(-)\ra
  \pi_j(-)$ of functors $h\Alg{H\F_p}\ra \Ab$, correspond precisely to the
  \emph{primitive} elements in the free algebra, i.e., elements in the
  equalizer of the pair of maps 
\[
\algapprox(i_1+i_2), \algapprox(i_1)+\algapprox(i_2)\colon
\algapprox(\F_p[i])_j \rightrightarrows \algapprox(\F_p[i]\oplus \F_p[i])_j,
\]
where $i_1,i_2\colon \F_p[i]\ra \F_p[i]\oplus \F_p[i]$ are the
evident  inclusion maps.
\end{itemize}
Thus, the theory of homotopy operations for commutative
$H\F_p$-algebra spectra is entirely described by  an essentially
algebraic object, the monad $\algapprox$ on graded vector spaces.  The
structure of $\algapprox$ and its category of algebras is entirely
understood: it is the category of graded $\F_p$-algebras equipped with
\emph{Dyer-Lashof operations} $\{Q^s, \beta Q^s\}$, which satisfy an
explicit set of 
axioms\footnote{Essentially
  \cite{bmms-h-infinity-ring-spectra}*{Ch.\ III, Thm.\ 1.1, (1)--(7)}, with the
  modification that in the homotopy of commutative $H\F_p$-algebras
  there is no Bockstein operation $\beta$; the Bockstein is defined
  only on $H\F_p$-algebras of the form $H\F_p\sm_S A$ for commutative
  $S$-algebras $A$.  Rather, one posits additional 
  operations ``$\beta Q^s$'' which satisfy suitable identities.  See
  \cite{may-general-algebraic-approach} for a concrete description of
  this structure.  }.

\begin{rem}
This story goes much the same way for algebras over the rational Eilenberg-MacLane
spectrum $H\Q$.  In this case, the monad $\algapprox$ on the category
of graded rational vector spaces turns out to be identical to the
usual (graded) symmetric algebra monad.  
\end{rem}

We can  carry out a variant of the scenario in which $H\F_p$ is replaced
by a Morava $E$-theory spectrum.  This was done in
\cite{rezk-congruence-condition}, and we will summarize the main
points we need below.  Before doing so, we note some of the points in
the above outline
which require modification. 
\begin{itemize}
\item The homotopy category $h\Mod{E}$ of $E$-modules is not
  equivalent to the category of graded $E_*=\pi_*E$-modules.
  However, there is a full subcategory $h\Mod{E}^{\mr{free}}\subset
  h\Mod{E}$ of 
  ``free'' modules (i.e., those for which $\pi_*M$ is a free
  $E_*$-module),  which is equivalent to the full subcategory
  $\Mod{E_*}^{\mr{free}}\subset \Mod{E_*}$ of free graded
  $E_*$-modules.

\item For a free $E$-module $M\approx \bigvee \Sigma^{i_\alpha}E$, it
  is rarely the case that $\Pfree(M)\approx \bigvee_m M^{\sm_E
    m}_{h\Sigma_m}$ is free.  This is not specific to 
  Morava $E$-theory:  for instance,  $E^{\sm_E m}_{h\Sigma_m}
  \approx E\sm \Sip B\Sigma_m \approx E\vee (E\sm \Si B\Sigma_m)$, and
  $\pi_*(E\sm \Si B\Sigma_m)$  is torsion for any homology theory
  $E$.

\item However, it is true \cite{rezk-congruence-condition}*{Prop.\
    3.17} that for a \emph{finitely generated} free
  $E$-module $M$, the spectra $L_{K(n)} \Pfree_m(M)\approx
  L_{K(n)}( M^{\sm_E m}_{h\Sigma_m}) $ obtained by taking the
  $K(n)$-localization of the weight-homogenous pieces of $\Pfree(M)$
  are also finitely generated and free.  

\item The $K(n)$-localization functor $L_{K(n)}$ does not preverve
  coproducts of spectra.  Thus, $L_{K(n)}\Pfree(M)$ is not generally a
  free $E$-module if $M$ is free.  What is true in this case is that
  $\pi_*L_{K(n)}   \Pfree(M)$ is the $\mathfrak{m}$-adic completion of
  a free $E_*$-module, where $\mathfrak\subset E_0$ is the maximal
  ideal of the coefficient ring.

\end{itemize}

For the purposes of getting at rings of additive operations on
$K(n)$-local commutative $E$-algebras, it
suffices to deal with $L_{K(n)}\Pfree_m(M)$ for finite free
$E$-modules $M$.  Thus we will define ``algebraic approximation'' functors
$\algapprox_m \colon \Mod{E_*}\ra \Mod{E_*}$ by a Kan extension of
$\pi_*L_{K(n)}\Pfree_m$ restricted to finite free $E$-modules.  We
will also construct a natural ``approximation map''
\[
\alpha \colon \algapprox\pi_*(-) =\bigoplus_m \algapprox_m\pi_*(-) \ra \pi_*L_{K(n)}\Pfree(-)
\]
of functors $h\Mod{E}\ra\Mod{E_*}$, which is an isomorphism for finite
free $E$-modules.  

\begin{rem}
It is important to note that $\alpha$ is \emph{not} generally an
isomorphism, even on finite free $E$-modules.  What is true is that
for a free (but not necessarily finite) $E$-module $M$, the induced map
\[
(\algapprox\pi_*M)^{\sm}_{\mathfrak{m}} \xra{\sim} \pi_*L_{K(n)}\Pfree(M)
\]
from  completion with respect to the maximal ideal
$\mathfrak{m}\subset E_0$ is an isomorphism.   
We address this issue and its relevance to operations
in \eqref{subsec:aside-on-additive-ops}.

Another approach to this issue is to construct a ``better''
approximation functor,  defined on a suitable category of
\emph{completed} $E_*$-modules.  Such an approximation functor comes
with an approximation map which is an isomorphism on all (completed)
free $E$-modules.  Such a better approximation functor is constructed
by Barthel and Frankland
\cite{barthel-frankland-completed-approximation}.   This is a useful
construction for the sake of applications,  but does not aid in
proving the theorem of this paper, so we will not make use of it.
\end{rem}

\subsection{Completed $E$-homology}
\label{subsec:completed-e-homology}

We now fix a Morava $E$-theory spectrum, with height $n$.

Given a spectrum $Y$, we define the \dfn{completed $E$-homology} of
$Y$ by 
\[
\cE{*}Y \defeq \pi_* L_{K(n)}(E\sm Y).
\]
If $X$ is a space, we write $\cE{*}X$ for $\cE{*}\Sip X$.
As is well-known, the functor $\cE{*}$ satisfies the
Eilenberg-Steenrod axioms but not Milnor's wedge axiom.

\subsection{Algebraic structure on the homotopy groups of
  $E$-algebras} 
\label{subsec:algebraic-functor}

As we noted above \eqref{exam:free-e-algebra}, the free commutative
$E$-algebra functor $\Pfree\colon h\Mod{E}\ra h\Mod{E}$ carries the
structure of a graded exponential monad with respect to derived smash
product of $E$-modules; in particular, it admits a
decomposition $\Pfree\approx \bigvee_{m\geq0} \Pfree_m$.  We need one
more piece of structure; namely, for all $m\geq1$, there is a natural
transformation $e\colon \Sigma \Pfree_m\ra \Pfree_m\Sigma$, defined
because $\Pfree_m$ is compatible with the enrichment of $\Mod{E}$ over
pointed spaces.

Following \cite{rezk-congruence-condition}*{\S4.4} we define the
\dfn{algebraic approximation functor} $\algapprox_m\colon
\Mod{E_*}\ra \Mod{E_*}$ as a left Kan extension
\[
\algapprox_m := \LKan_{\pi_* i} \pi_*L_{K(n)}\Pfree_mi
\]
with respect to the functors in the diagram
\[\xymatrix@C=40pt{
{h\Mod{E}^{\mr{finite\,free}}} \ar[r]^-{i} \ar[d]_{\pi_*}^{\sim}
& {h\Mod{E}} \ar[r]^-{\pi_*L_{K(n)}\Pfree_m} \ar[d]_{\pi_*}
& {\Mod{E_*}}
\\
{\Mod{E_*}^{\mr{finite\,free}}} \ar[r]_-{j}
& {\Mod{E_*}} \ar@{.>}[ur]_{\algapprox_m}
}\]
Here $i$ and $j$ are the evident fully faithful inclusions.
By construction (since $\pi_* i\approx j\pi_*$ is fully faithful)
there is a natural isomorphism $\algapprox_m \pi_* i \approx \pi_*
L_{K(n)}\Pfree_m i$, i.e., $\algapprox_m(\pi_*M)$ computes the homotopy
groups of $L_{K(n)}\Pfree_m(M)$ when $E$ is a finitely generated free
$E$-module.  In \cite{rezk-congruence-condition}*{\S4.6} it is shown
that this equivalence extends to an \dfn{approximation map}
\[
\alpha_m \colon \algapprox_m(\pi_*M) \ra \pi_*L_{K(n)}\Pfree_m(M),
\]
a natural transformation of  functors $h\Mod{E}\ra \Mod{E_*}$.

We define $\algapprox:= \bigoplus_m \algapprox_m\colon \Mod{E_*}\ra
\Mod{E_*}$, and obtain  an approximation map
\[
\alpha\colon \algapprox(\pi_* M) \ra \pi_* L_{K(n)}\Pfree(M)
\]
(using the natural map $\bigvee_m L_{K(n)}\Pfree_m \ra L_{K(n)}(
\bigvee_m \Pfree_m)=L_{K(n)}\Pfree$).

\subsection{Properties of the algebraic approximation}
\label{subsec:algapprox-properties}

We summarize here the salient properties of the 
of the algebraic approximation.
\begin{enumerate}
\item The functor $\algapprox$ is equipped with the structure of an
  exponential monad with respect to tensor product of $E_*$-modules
  \cite{rezk-congruence-condition}*{Prop.\ 4.10, \S4.13},
  and the approximation map $\alpha$ \cite{rezk-congruence-condition}*{\S4.6} is an association between
  $\algapprox$ and $\Pfree$ in the sense of 
  \eqref{subsec:associations}.  (This is clear from the construction of
  $\gamma_k$ in \cite{rezk-congruence-condition}*{\S4.13}.)

\item The decomposition $\algapprox = \bigoplus_m \algapprox_m$ gives
  the structure of a  \emph{graded} exponential monad,
\cite{rezk-congruence-condition}*{\S4.4,
    \S4.13}, corresponding to the analogous decomposition of
  $\Pfree$.  We will sometimes write 
  typically we write $\algapprox\langle m\rangle$ for 
  $\algapprox_m$.  Furthermore, the approximation map
  descends to the grading, via the maps $\alpha_m$.

\item 
If $M$ is an $E$-module such that $\pi_*M$ is finite and free as an
$E_*$-module, then 
$\alpha_m$ evaluated at $M$ is an isomorphism
\cite{rezk-congruence-condition}*{Prop.\ 4.8}.

\item The approximation maps $\alpha_m$ give rise to a natural
  transformation
\[
\wh\alpha\colon L_0 \algapprox(\pi_*M) \ra \pi_*L_{K(n)}\Pfree(M)
\]
which is an isomorphism when $\pi_*M$ is a flat $E_*$-module
\cite{rezk-congruence-condition}*{Prop.\ 4.17}.  Here
$L_0$ denotes the $0$th left derived functor of the
$\mathfrak{m}$-adic completion functor $(-)^{\sm}_{\mathfrak{m}}\colon
\Mod{E_*}\ra \Mod{E_*}$, where $\mathfrak{m}\subset \pi_0E$ is the
maximal ideal.  

We note that if $M_*$ is a free $E_*$-module, then
$\algapprox(M_*)$ is also free, whence $L_0\algapprox(M_*)$ is
isomorphic to the $\mathfrak{m}$-adic completion of the free module
$\algapprox(M_*)$.  Thus, if $\pi_*M$ is a free $E_*$ module, then we
get an isomorphism $\algapprox(\pi_*M)^{\sm}_{\mathfrak{m}}\xra{\sim}
\pi_*L_{K(n)}\Pfree(M)$. 

\item 
If $N$ is a finite and free $E_*$-module, then so is $\algapprox_m N$
for all $m\geq0$ (by (3) and \cite{rezk-congruence-condition}*{Prop.\
  3.16}).  Thus for all $q\geq0$, the approximation maps 
induce isomorphisms 
\[
\algapprox^{\circ q}\langle m\rangle(\pi_*M)\xra{\sim}
\pi_*L_{K(n)}(\Pfree^{\circ q}\langle m\rangle M)
\]
when $\pi_*M$ is finite and free.

\item 
The functor $\algapprox$ and each functor $\algapprox_m$ commutes with
filtered colimits and reflexive coequalizers
\cite{rezk-congruence-condition}*{Prop.\ 4.12}.  

\item
If $M$ is an $E_*$-module concentrated in even degree, then so is
$\algapprox M$.  (This follows using the Kan extension property (5),
the isomorphism (3), and the fact that $\cE{*}B\Sigma_m$ are
concentrated in even degree.)
\end{enumerate}

We write $\gralgcat$ for the category of algebras for the monad
$\algapprox$ on $\Mod{E_*}$.  

\begin{enumerate}
\item [(8)] The category $\gralgcat$ is complete and
  cocomplete; limits are computed in the underlying category
  $\Mod{E_*}$, and colimits are computed in the underlying category
  $\Alg{E_*}$ of commutative $E_*$-algebras.  In particular,
  coproducts in $\gralgcat$ are tensor products of $E_*$-modules
  \cite{rezk-congruence-condition}*{Cor.\ 4.19}.  

\item [(9)] Using the approximation map, we see
  that every algebra $A$ for the monad $\Pfree$ on $h\Mod{E}$ (i.e., for
  every $H_\infty$-$E$-algebra), the homotopy groups $\pi_*L_{K(n)}A$
  naturally carry the structure of a $\algapprox$-algebra.  That is,
  we obtain a functor
\[
\pi_*L_{K(n)} \colon \Alg{\Pfree} \ra \gralgcat
\]
lifting $\pi_*L_{K(n)} \colon \Mod{E}\ra \Mod{E_*}$. 

\item [(10)]
Let $\Sigma\colon \Mod{E_*}\ra \Mod{E_*}$ denote the functor $\Sigma
M\defeq E_*S^1\otimes_{E_*} M$.  For $m\geq1$, there is an algebraic
\dfn{suspension   map}
\[
E\colon \Sigma \algapprox_m \ra \algapprox_m \Sigma,
\]
which with respect to the approximation map is compatible with the
topological suspension map $e\colon 
\Sigma \Pfree_m\ra \Pfree_m \Sigma$.  Furthermore, the algebraic
suspension map is compatible with the monad structure on $\algapprox$,
in the sense that both ways of building a map $\Sigma
(\algapprox\algapprox)\langle m\rangle \ra (\algapprox\langle m\rangle)\Sigma$
coincide.  
(This suspension map is denoted
$E_1$ in \cite{rezk-congruence-condition}*{\S4.25}, where the notation
$\omega^{-q/2}$ is used for $E_*S^q$.  Compatibility between algebraic
and topological suspension maps is by construction. The algebraic
suspension map is defined for finite free $E_*$-modules by means of the
approximation isomorphism and the topological suspension map; the
general algebraic suspension map is then defined using (5) above.  The
compatibility with the monad structure is
\cite{rezk-congruence-condition}*{Prop.\ 4.27}.) 
\end{enumerate}

We need one more fact about the algebraic suspension: it ``kills
decomposable elements''.
\begin{prop}\label{prop:diagonal-is-null}
For $i,j\geq1$, the composition
\[
\Sigma(\algapprox_i M\otimes \algapprox_j N) \xra{\Sigma\zeta} \Sigma
\algapprox_{i+j}(M\oplus N) \xra{E} \algapprox_{i+j}
\Sigma(M\oplus N)
\]
is equal to $0$.
\end{prop}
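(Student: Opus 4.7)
The plan is to reduce to a topological computation and then observe that the resulting map factors through the smash-diagonal of $S^1$, which is nullhomotopic once $i+j\geq 2$.

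By naturality it suffices to verify the identity when $M$ and $N$ are finite free $E_*$-modules: each of $\algapprox_m$, $\zeta$, and $E$ is determined by its restriction to finite free modules via left Kan extension (property (5)), so the vanishing on finite free modules implies the general case.  Under this hypothesis the approximation maps $\alpha_m$ are isomorphisms (property (3)), and by the construction of $E$ recalled in (10), the algebraic composite $E\circ\Sigma\zeta$ corresponds under $\alpha$ to the topological composite
\[
\Sigma\bigl(\Pfree_i\tilde M\sm \Pfree_j\tilde N\bigr)
\xra{\Sigma\zeta}\Sigma\Pfree_{i+j}(\tilde M\oplus\tilde N)
\xra{e}\Pfree_{i+j}\Sigma(\tilde M\oplus\tilde N),
\]
after applying $\pi_*L_{K(n)}$, where $\tilde M,\tilde N$ are $E$-modules with $\pi_*\tilde M=M$, $\pi_*\tilde N=N$.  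It suffices to show this composite is null in the homotopy category of $E$-modules.

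I then compute the composite using the graded exponential decompositions
\[
\Pfree_{i+j}(\tilde M\oplus\tilde N)\approx\bigoplus_{a+b=i+j}\Pfree_a\tilde M\sm \Pfree_b\tilde N,
\]
and similarly for $\Pfree_{i+j}(\Sigma\tilde M\oplus\Sigma\tilde N)$.  The map $\zeta$ is inclusion of the $(i,j)$-summand.  The pointed-enrichment formula defining $e$ sends $s\sm(x_1\sm\cdots\sm x_{i+j})_\Sigma$ to $\bigl((s\sm x_1)\sm\cdots\sm(s\sm x_{i+j})\bigr)_\Sigma$; applied to a class where the first $i$ factors lie in $\tilde M\subset\tilde M\oplus\tilde N$ and the last $j$ in $\tilde N$, the output lies in the $(i,j)$-summand $\Pfree_i\Sigma\tilde M\sm\Pfree_j\Sigma\tilde N$ of the target, and on that summand the composite is given by the formula $s\sm x\sm y\mapsto s^{\sm(i+j)}\sm x\sm y$.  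In other words, the composite factors through the smash-diagonal $\delta\colon S^1\ra(S^1)^{\sm(i+j)}=S^{i+j}$, $s\mapsto s^{\sm(i+j)}$.

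Since $i+j\geq 2$, $\pi_1 S^{i+j}=0$, so $\delta$ is nullhomotopic and hence the whole composite is null.  Unwinding the approximation isomorphism yields $E\circ\Sigma\zeta=0$.  The main obstacle is the middle step: one must verify carefully that both $\zeta$ and $e$ respect the exponential decompositions on source and target, so that the composite is pinned to the $(i,j)$-summand and is given by the smash-diagonal formula above.  The remaining ingredients---the Kan extension reduction, the compatibility of $E$ with $e$, and the nullity of $\delta$---are essentially formal.
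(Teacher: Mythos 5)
Your outer scaffolding (reduction to finite free modules by Kan extension, transport to topology via the approximation isomorphism and the compatibility of $E$ with $e$) matches the paper's proof exactly, and the idea that the nullity comes from a diagonal on the suspension coordinate is also the paper's idea. But the key step as you have written it would fail if carried out literally. You claim the composite factors through the $(i+j)$-fold smash diagonal $\delta\colon S^1\ra (S^1)^{\sm(i+j)}$, i.e.\ through a map $(S^1)^{\sm(i+j)}\sm \Pfree_i\tilde M\sm\Pfree_j\tilde N\ra \Pfree_i\Sigma\tilde M\sm\Pfree_j\Sigma\tilde N$ "given by the same formula with the circle coordinates decoupled." No such map exists: inside $\Pfree_i\Sigma\tilde M=((S^1\sm\tilde M)^{\sm i})_{h\Sigma_i}$ the symmetric group permutes the suspension coordinates together with the $\tilde M$-factors, so the pointwise formula is only meaningful along the diagonal and does not extend over $(S^1)^{\sm(i+j)}$ sitting outside the homotopy-orbit construction. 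A sanity check that this step proves too much: the identical formula-level reasoning applied to $e\colon\Sigma\Pfree_m\ra\Pfree_m\Sigma$ alone (no $\zeta$) would "show" that $e$ is null for all $m\geq 2$, which is false --- the suspension map on power operations is nontrivial, and indeed is used essentially elsewhere in the paper (e.g.\ in the comparison maps $g_q\colon\Delta^q\ra\Gamma^{q-1}$).

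The correct repair, which is the paper's argument, is to split the circle only into \emph{two} factors, one for each smash factor appearing in the source of $\zeta$: the enriched naturality of $\zeta$ for the two bifunctors $(X,Y)\mapsto\Pfree_iX\sm_E\Pfree_jY$ and $(X,Y)\mapsto\Pfree_{i+j}(X\vee Y)$ gives a homotopy commutative square identifying $e\circ\Sigma\zeta$ with the composite
\[
\Sigma(\Pfree_iX\sm\Pfree_jY)\ra \Sigma\Pfree_iX\sm\Sigma\Pfree_jY
\xra{e\sm e}\Pfree_i\Sigma X\sm\Pfree_j\Sigma Y\ra\Pfree_{i+j}(\Sigma X\vee\Sigma Y),
\]
in which each suspension coordinate stays attached to its own block (so each $e$ is a genuine map), and only the first arrow uses a diagonal, namely the $2$-fold diagonal $S^1\ra S^1\sm S^1\simeq S^2$, which is already null since $\pi_1S^2=0$. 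The point is that the nullity genuinely requires the presence of $\zeta$ --- it is the separation into two blocks never permuted into one another that lets the circle coordinate be split --- whereas your formulation locates the vanishing in a factorization that is not available. With this step replaced, the remainder of your argument (finite-free case plus Kan extension) is the paper's.
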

\begin{proof}
Consider functors $\Mod{E}\times \Mod{E}\ra \Mod{E}$ defined by $(X,Y)\mapsto
\Pfree_iX\sm_E \Pfree_jY$ and $(X,Y)\mapsto \Pfree_{i+j}(X\vee Y)$.  
These are each enriched over pointed spaces, and thus we obtain a
homotopy commutative diagram
\[\xymatrix{
{\Sigma (\Pfree_i X \sm \Pfree_j Y)} \ar[r] \ar[d]_{\Sigma\zeta}
& {\Pfree_i \Sigma X \sm \Pfree_j \Sigma Y} \ar[d]
\\
{\Sigma \Pfree_{i+j}(X\vee Y)} \ar[r]_-{e}
& {\Pfree_{i+j}(\Sigma X\vee \Sigma Y)}
}\]
where the vertical maps come from the exponential structure.    We see
that the bottom horizontal map is the suspension map $e$, while the
top horizontal map factors $\Sigma(\Pfree_i X\sm \Pfree_j Y) \ra
\Sigma\Pfree_i X\sm \Sigma \Pfree_j Y\ra \Pfree_i \Sigma X\sm
\Pfree_j\Sigma Y$.  The first of these two maps is null, as it uses
the diagonal embedding $S^1\ra S^1\sm S^1$.  Thus the composite
$e\circ \zeta$ is null.  The corresponding vanishing result for the
algebraic suspension map is immediate for finite free modules, and
follows in general since the algebraic approximation functors are left
Kan extended from finite free modules.
\end{proof}

\subsection{Augmented rings}
\label{subsec:augmented-rings}

Let $\gralgcat_{/E_*}$ denote the category of $\algapprox$-objects
augmented over $E_*$; an object of $\gralgcat_{/E_*}$ is a morphism $A\ra
E_*$.

We will write $\talgapprox$ for the subfunctor of $\algapprox$
defined by 
\[
\talgapprox(M) \defeq \bigoplus_{m\geq1} \algapprox_m(M). 
\]
This is precisely the kernel of the natural augmentation map
$\algapprox(M)\ra \algapprox(0)=E_*$.

As noted above (\eqref{subsec:positive-part}, using
\eqref{subsec:algapprox-properties}(2)),  
the functor $\talgapprox$ itself inherits the structure of a monad on
$\Mod{E_*}$.  The  category of algebras over $\talgapprox$ is equivalent to the
category of 
$\gralgcat_{/E_*}$ of augmented $\algapprox$-algebras.  This is
a standard 
observation, so we won't spell out the details, except to note
that if $A\ra E_*$ is an augmented $\algapprox$-algebra with structure
map $\psi\colon \algapprox A\ra A$ and augmentation ideal $\wt{A}$, then
the corresponding $\talgapprox$-algebra structure $\tilde\psi\colon
\talgapprox \wt{A}\ra \wt{A}$ is simply the restriction of $\psi$ to
$\talgapprox \wt{A}\subset \algapprox A$.

\subsection{Abelian group objects}
\label{subsec:abelian-group-objects}

Recall that the notion of \dfn{abelian group object} can be defined in
any category with finite products.

\begin{prop}
An object $A\ra E_*$ of $\gralgcat_{/E_*}$ with augmentation ideal $\wt{A}$
admits the structure of an 
abelian group object if and only if $\wt{A}^2=0$, in which case the abelian
group structure is unique.
\end{prop}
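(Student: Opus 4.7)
The plan is the monadic version of the classical characterization of abelian group objects in augmented commutative algebras as square-zero extensions. Since $E_*$ is the initial object in $\gralgcat$ by property (8) of \S\ref{subsec:algebraic-functor}, the unit $s\colon E_*\to A$ is the unique algebra map and automatically a section of $\epsilon$, yielding a splitting $A\cong E_*\oplus\wt A$ of $E_*$-modules. Any zero element of an abelian group object structure on $A\to E_*$ in $\gralgcat/E_*$ must coincide with $s$.

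Uniqueness and the direction $(\Rightarrow)$ would follow from a single calculation. Any morphism $\mu\colon A\times_{E_*}A\to A$ in $\gralgcat/E_*$ is automatically a map of $E_*$-algebras, hence $E_*$-linear. Decomposing an element $(a,b)\in A\times_{E_*}A$ as $(s(\lambda),s(\lambda))+(\tilde a,0)+(0,\tilde b)$, with $\lambda=\epsilon(a)=\epsilon(b)$ and $\tilde a,\tilde b\in\wt A$, $E_*$-linearity together with the identity laws $\mu(s\epsilon(x),x)=x=\mu(x,s\epsilon(x))$ force
\[
\mu(a,b)=a+b-s\epsilon(a),
\]
which proves uniqueness. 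For $(\Rightarrow)$, I would then apply the multiplicativity of $\mu$ to the relation $(\tilde a,0)\cdot(0,\tilde b)=(0,0)$ in the ring $A\times_{E_*}A$: the left side gives $\mu(0,0)=0$ while the right gives $\mu(\tilde a,0)\cdot\mu(0,\tilde b)=\tilde a\tilde b$, so $\wt A^2=0$.

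For $(\Leftarrow)$, assume $\wt A^2=0$ and define $\mu$ by the same formula, with inverse $\iota(a)=2s\epsilon(a)-a$. The group axioms and the ring-map identity $\mu(xy)=\mu(x)\mu(y)$ follow by a direct calculation, the latter precisely using $\wt A^2=0$ to annihilate the cross terms. The main obstacle is to promote $\mu$ from a map of commutative $E_*$-algebras to a morphism of $\algapprox$-algebras, i.e., to verify compatibility with all the higher power operations. I would handle this via the equivalence of \S\ref{subsec:augmented-rings} between augmented $\algapprox$-algebras and $\talgapprox$-algebras, under which $\mu$ corresponds to the codiagonal $\tilde\mu\colon I\to\wt A$, $(x,y)\mapsto x+y$, on the augmentation ideal $I=\wt A\oplus\wt A$ of the product. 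Two observations combine to give equivariance. First, the $\talgapprox$-structure $\psi_I$ on $I$ is characterized by its agreement with both projections $p_i\colon I\to\wt A$; since the restriction of each $p_i$ to the opposite summand is zero and $\algapprox_q(0)=0$ for $q\geq 1$ (as $\algapprox_q$ sends the zero object to zero), one checks that $\psi_I$ vanishes on all mixed summands $\algapprox_p(\wt A)\otimes\algapprox_q(\wt A)$ ($p,q\geq 1$) of the exponential decomposition $\algapprox_m(I)=\bigoplus_{p+q=m}\algapprox_p(\wt A)\otimes\algapprox_q(\wt A)$. Second, on such a mixed summand the equivariance condition $\tilde\mu\circ\psi_I=\psi_{\wt A}\circ\talgapprox(\tilde\mu)$ reduces to the vanishing of $\psi_{\wt A}\circ\algapprox_{p+q}(\nabla)\circ\zeta_{p,q}$, which by the description of the commutative monoid structure on $\algapprox$-algebras in \S\ref{subsec:exponential-monads} coincides with the product $\psi_p(u)\cdot\psi_q(v)$ in $\wt A$ and so lies in $\wt A^2=0$. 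On the pure-weight summands (those with $p=0$ or $q=0$), equivariance is automatic from the naturality of $\zeta$.
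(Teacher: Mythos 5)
Your proposal is correct in outline, and for the converse direction it takes a genuinely different, more computational route than the paper. The paper gets all of the power-operation compatibility in one stroke from property (8) of \S\ref{subsec:algebraic-functor}: since coproducts in $\gralgcat$ are tensor products of $E_*$-modules, the multiplication $A\otimes_{E_*}A\ra A$ is the codiagonal of a coproduct and hence automatically a map of $\algapprox$-algebras, while the evident $\algapprox$-algebra map $A\otimes_{E_*}A\ra A\times_{E_*}A$ is surjective with kernel $\wt{A}\otimes_{E_*}\wt{A}$; so a unital $f\colon A\times_{E_*}A\ra A$ exists precisely when the multiplication kills $\wt{A}\otimes_{E_*}\wt{A}$, i.e.\ when $\wt{A}^2=0$, and both implications and uniqueness drop out together. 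You instead verify $\algapprox$-equivariance of the codiagonal by hand, using the exponential decomposition of $\algapprox_m(\wt{A}\oplus\wt{A})$, the vanishing $\algapprox_q(0)=0$ for $q\geq1$, and the identification of $\psi_{\wt{A}}\circ\algapprox_{p+q}(\nabla)\circ\zeta_{p,q}$ with the product of structure maps; in effect you re-prove, on exactly the summands you need, the piece of the coproduct identification that the paper quotes wholesale. The paper's route buys brevity and treats all operations at once; yours avoids invoking the nontrivial fact that $\gralgcat$-coproducts are tensor products.

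Two loose ends should be tied up. First, an abelian group object also requires the inversion map to be a morphism of $\gralgcat/E_*$; you only check that $\iota(a)=2s\epsilon(a)-a$ is a ring map, and its compatibility with the power operations (equivalently, that $-\mathrm{id}$ on $\wt{A}$ commutes with the $\talgapprox$-structure) neither is obvious nor follows from your summand analysis. The standard fix: once $\mu$ and the unit are morphisms, the shear map $(\pi_1,\mu)\colon A\times_{E_*}A\ra A\times_{E_*}A$ is a bijective morphism, hence an isomorphism in $\gralgcat/E_*$, and $\pi_2\circ(\pi_1,\mu)^{-1}\circ(\mathrm{id},s\epsilon)$ is a morphism which one computes to be $\iota$. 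Second, your key identity $\psi_{\wt{A}}\bigl(\algapprox_{p+q}(\nabla)(\zeta_{p,q}(u\otimes v))\bigr)=\psi(u)\cdot\psi(v)$ is not literally the definition in \S\ref{subsec:exponential-monads}, which multiplies elements of $A$ (images of $\eta$), not arbitrary elements of $\algapprox(A)$; it is the statement that the structure map $\psi\colon\algapprox A\ra A$ is a ring homomorphism for the $\zeta$-induced multiplication on the free algebra. This is true for $\algapprox$ (it holds for $\Pfree$ and passes through the Kan extension from finite free modules), but it needs to be cited or argued, since it is precisely the compatibility that the paper's appeal to the coproduct description supplies for free.
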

\begin{proof}
An abelian group structure is a map $f\colon A\times_{E_*} A \ra A$ of
$\algapprox$-algebras, which satisfies the axioms for an abelian
group; the unit of the abelian group is necessarily given by the
unique $\algapprox$-algebra map $E_*\ra A$.  Since $A\approx E_*\oplus
\wt{A}$ as $E_*$-modules, we see that the evident
$\algapprox$-algebra map $A\otimes_{E_*} A\ra A\times_{E_*}A$ is
surjective, with kernel $\wt{A}\otimes_{E_*}\wt{A}$.  Thus, a map $f$
satisfying the 
unit axiom for an abelian group  exists if and only if the
multiplication map 
$A\otimes_{E_*} A\ra A$ sends $\wt{A}\otimes_{E_*}\wt{A}$ to $0$.
That is, a 
unital $f$ 
exists if and only if $\wt{A}^2=0$.  It is straightforward to show that
if such a unital  $f$ exists, it is given by $f(c,x,y)=c+x+y$ (written in
terms of  the $E_*$-module
decompositions $A\approx E_*\oplus \wt{A}$ and $A\times_{E_*}A\approx 
E_*\oplus \wt{A}\oplus \wt{A}$), and therefore is the unique abelian group
structure on $A$.
\end{proof}

Thus, the category $(\gralgcat_{/E_*})_{\mathrm{ab}}$ of abelian group
objects is identified with the full subcategory $\gralgcat_{/E_*}$ of
augmented $\algapprox$-algebras with $\wt{A}^2=0$, and the left adjoint to
this inclusion is given by  $A\mapsto A/\wt{A}^2$, which can be regarded as
providing the \dfn{cotangent space} to $A$ at the augmentation $A\ra E_*$.

\subsection{Suspension and loop} 
\label{subsec:suspension-and-loop}

The homotopy category $h(\Alg{E}_{/E})$ of augmented commutative
$E$-algebra spectra admits a \dfn{loop} functor $\Omega\colon
h(\Alg{E}_{/E})\ra h(\Alg{E}_{/E})$.  If $A$ is an augmented
commutative $E$-algebra, then $\Omega A$ is the homotopy pullback 
\[\xymatrix{
{\Omega A} \ar[r] \ar[d]
& {E} \ar[d]
\\
{E} \ar[r]
& {A}
}\]
in $h(\Alg{E}_{/E})$.  The underlying $E$-module spectrum of $\Omega A$
has the form 
$E\vee \Sigma^{-1}\wt{A}$, where $\wt{A}$ is the homotopy fiber of the
augmentation $A\ra E$.  

There is a corresponding loop functor $\Omega\colon \gralgcat_{/E_*}\ra
\gralgcat_{/E_*}$, with the property that as an $E_*$-module $\Omega
A\approx E_*\oplus (E_*S^{-1}\otimes_{E_*} \wt{A})$, where $\wt{A}$ is the
augmentation ideal.  Furthermore, the augmentation ideal of $\Omega A$
will be square-zero, and so $\Omega$ factors through a functor
$\gralgcat_{/E_*}\ra (\gralgcat_{/E_*})_{\mathrm{ab}}$. 

To define the algebraic loop functor, recall the suspension map
$E\colon \Sigma\talgapprox \ra \talgapprox \Sigma$ of
\S\ref{subsec:algebraic-functor}(10).  Since $\Sigma\colon \Mod{E_*}\ra
\Mod{E_*}$ is a self equivalence, it has an inverse functor
$\Sigma^{-1}$, and we may therefore use it to define the \dfn{desuspension
  map} $E'\colon \talgapprox \Sigma^{-1}\ra \Sigma^{-1}\talgapprox$.  

Let $A\ra E_*$ in $\gralgcat_{/E_*}$ with augmentation ideal $\wt{A}$.  As
noted in \S\ref{subsec:augmented-rings}, it is equivalent to consider
$\wt{A}$ as an algebra over $\talgapprox$, with structure map
$\tilde\psi\colon \talgapprox \wt{A}\ra \wt{A}$.  We thus \emph{define} $\Omega
A$ to be the augmented $\algapprox$-algebra with underlying
augmentation ideal $\Sigma^{-1}\wt{A}$, and with structure map
$\tilde{\psi}_{\Omega}$  defined as the composite
\[
\talgapprox \Sigma^{-1}\wt{A} \xra{E'} \Sigma^{-1} \talgapprox \wt{A}
\xra{\Sigma^{-1}\tilde\psi} \wt{A}.
\]
To show that  $\tilde{\psi}_\Omega$ defines a $\talgapprox$-algebra
structure is a straightforward calculation, which relies on the
compatibility of suspension with the monad structure of $\algapprox$
described in \ref{subsec:algebraic-functor}(10).  That the augmentation
ideal of $\Omega A$ is square-zero amounts to
\eqref{prop:diagonal-is-null}.

\subsection{The rings $\Gamma^q$}
\label{subsec:rings-gamma-q}

The ring $\Gamma^q$ is defined to be a ring which naturally acts on
the degree $(-q)$-part of 
the underlying $E_0$-module of a $\algapprox$-algebra, and hence acts
naturally on $\pi_{-q}$ of a $K(n)$-local commutative $E$-algebra.

Let $U^q\colon \gralgcat \ra \Ab$ denote the functor
which sends a 
$\algapprox$-algebra $A$ to its $(-q)$-th grading $A_q$, viewed as an
abelian group.  We \emph{define}  $\Gamma^q\defeq \End(U^q)$, the endomorphism
ring of the functor $U^q$; thus, an element $f\in \Gamma^q$ gives a natural
abelian group homomorphism $A_{-q}\ra A_{-q}$ for all $A$ in
$\gralgcat$. 

The underlying set of $U^q(A)$ is naturally
isomorphic to $\Hom_{\gralgcat}(\algapprox(E_*S^{-q}),A)$, where
$\algapprox (M)$ represents the free $\algapprox$-algebra on an
$E_*$-module $M$.  Therefore, we see that endomorphisms of the
composite functor $\gralgcat\xra{U^q} \Ab\ra \Set$
are exactly the $\algapprox$-algebra endomorphisms of
$\algapprox(E_*S^{-q})$.  Hence, the monoid of set-endomorphisms of
$U^q$ is 
\[
\Hom_{\gralgcat}(\algapprox(E_*S^{-q}), \algapprox(E_*S^{-q})) \approx
\Hom_{\Mod{E_0}}( E_*S^{-q}, \algapprox(E_*S^{-q})) \approx
\algapprox(E_*S^{-q})_{-q},
\]
and so $\Gamma^q$ may be identified with the degree $-q$
\emph{primitives} of
$\algapprox(E_*S^{-q})$.   We thus recover the definition of
$\Gamma^q$ given in 
\cite{rezk-congruence-condition}*{\S7}, where the notation
$\omega^{q/2}$ is used for the $E_*$-module $E_*S^{-q}$.

Using the isomorphisms 
\[
\algapprox_m (E_*S^{-q}) \approx \pi_* L_{K(n)}\Pfree_m(E\sm S^{-q})
\approx \cE{*}B\Sigma_m^{-q\rho_m}
\]
and 
\[\algapprox_m (E_*S^{-q}\oplus E_*S^{-q}) \approx \pi_* L_{K(n)}
\Pfree_m(E\sm (S^{-q}\vee S^{-q})) \approx \bigoplus_{0\leq j\leq
  m}\cE{*} B(\Sigma_j\times \Sigma_{m-j})^{-q\rho_m},
\]
we find that 
\[
\Gamma^q
\approx \bigoplus_{k\geq0}
\Gamma^q[k], 
\]
where
\begin{align*}
\Gamma^q[k] 
& \approx 
\Ker \left[ \algapprox_{p^k}(E_*S^{-q})_{-q} \ra
  \algapprox_{p^k}(E_*S^{-q}\oplus E_*S^{-q})_{-q} \right]
\\
&\approx
\Ker\left[ \cE{-q}B\Sigma_{p^k}^{-q\rho_{p^k}}  \ra
  \bigoplus_{0<j<p^k} \cE{-q}B(\Sigma_j\times
  \Sigma_{p^k-j})^{-q\rho_{p^k}}\right],
\end{align*}
the map in the second line being induced by the transfer map
associated to the inclusion 
$\Sigma_j\times \Sigma_{p^k-j}\subset \Sigma_{p^k}$.

Note that $E_0=\Gamma^q[1]\subseteq \Gamma^q$ is a subring, but is not
central; this is because elements of $\Gamma^q$ given rise to  natural maps of
abelian groups, but not necessarily  to $E_0$-module maps. Thus each
$\Gamma^q[k]$ is naturally an $E_0$-bimodule.  The \emph{left} $E_0$-module
structure on $\Gamma^q[k]$ coincides with the module structure
inherited by the inclusion $\Gamma^q[k]\subseteq
\cE{-q}B\Sigma_{p^k}^{-q\rho_{p^k}}$. 

The following is crucial for understanding the structure of
$\Gamma^q$.  It is proved as
\cite{rezk-congruence-condition}*{Prop.\ 7.3}, though it ultimately
derives from \cite{strickland-morava-e-theory-of-symmetric}.
\begin{prop}\label{prop:gamma-finite-free}
Each $\Gamma^q[k]$ is a finitely generated free  left $E_0$-module.
\end{prop}
\begin{proof}
That these are finitely generated and free (and in fact, $\Gamma^q[k]$
is a direct summand of $\cE{-q}B\Sigma_{p^k}^{-q\rho_{p^k}}$) is proved as
\cite{rezk-congruence-condition}*{Prop.\ 6.1 and Prop.\ 7.2}, an
argument which 
ultimiately derives from
\cite{strickland-morava-e-theory-of-symmetric}.   
\end{proof}
We also know the ranks of the $\Gamma^q[k]$; see
\eqref{prop:gamma-ranks} below.

Here is a variant of the above description of $\Gamma^q$, which we will
need below.  Let
$I^q\colon \gralgcat_{/E_*}\ra \Ab$ denote the functor which sends an
augmented $\algapprox$-algebra to the $(-q)$-degree part of its
augmentation ideal.  There is an evident ring homomorphism
$\End(U^q)\ra \End(I^q)$, and it is straightforward to show that this is
an isomorphism.  That is, $\Gamma^q$ is also the endomorphism ring of
$I^q$.

\subsection{The rings $\Delta^q$}
\label{subsec:rings-delta-q}

The ring $\Delta^q$ is defined to be a ring which acts naturally on
the degree $(-q)$-part of the \emph{cotangent space} to an augmentation  $A\ra
E_*$.

Let $Q^q\colon \gralgcat_{/E_*}\ra \Ab$ denote the functor which
sends an augmented $\algapprox$-algebra $A\ra E_*$ to the $(-q)$-th
grading of its abelianization $\wt{A}/\wt{A}^2$, where $\wt{A}$ is the
augmentation 
ideal.  We \emph{define} 
$\Delta^q\defeq \End(Q^q)$, the endomorphism ring of the functor
$Q^q$; thus an element $f\in 
\Delta^q$ gives a natural abelian group homomorphism
$(\wt{A}/\wt{A}^2)_{-q}\ra 
(\wt{A}/\wt{A}^2)_{-q}$ for all $A$ in $\gralgcat_{/E_*}$.  

To each endomorphism $\phi\colon
Q^q\ra Q^q$ we associate an element $\phi(x)$ in the $E_*$-module 
$(\talgapprox(E_*S^{-q}) /
(\talgapprox(E_*S^{-q}))^2)_{-q}$, defined as the image of
the canonical generator of $E_{-q}S^{-q}$ under the map
\[
x\in E_{-q}S^{-q}\ra
\talgapprox(E_*S^{-q})_{-q} \ra 
Q^q(\algapprox(E_*S^{-q}))\xra{\phi}
Q^q(\algapprox (E_*S^{-q})); 
\]
\begin{prop}
The map $\Delta^q\ra
Q^q(\algapprox(E_*S^{-q})) \approx
(\talgapprox(E_*S^{-q}))/(\talgapprox(E_*S^{-q}))^2)_{-q}$ 
sending $\phi$ to $\phi(x)$  is an isomorphism.
\end{prop}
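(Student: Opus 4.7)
The plan is a Yoneda-style argument, analogous to (but much simpler than) the identification of $\Gamma^q$ with a module of primitives in \cite{rezk-congruence-condition}*{\S7}. Set $R \defeq \algapprox(E_*S^{-q})$ and let $R' \defeq R/\wt{R}^2 = \algapprox(E_*S^{-q})/\talgapprox(E_*S^{-q})^2$. The first step is to show that the set-valued functor underlying $V^q$ is represented by $R'$. Since $V^q$ factors through the abelianization functor from \S\ref{subsec:abelian-group-objects}, the adjunction between abelianization and inclusion, combined with the fact that $R$ is the free augmented $\algapprox$-algebra on $E_*S^{-q}$, gives natural bijections
\[
\Hom_{\gralgcat/E_*}(R',A) \cong \Hom_{(\gralgcat/E_*)_{\mathrm{ab}}}(R',A^{\mathrm{ab}}) \cong \Hom_{\Mod{E_*}}(E_*S^{-q},\wt{A}/\wt{A}^2) = V^q(A),
\]
where $A^{\mathrm{ab}} = A/\wt{A}^2$.

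By the Yoneda lemma, set-natural endomorphisms of $V^q$ are then in bijection with $\Hom_{\gralgcat/E_*}(R',R') \cong V^q(R')$, and under the identification $V^q(R') \cong (\talgapprox(E_*S^{-q})/\talgapprox(E_*S^{-q})^2)_{-q}$ (together with the equality $V^q(R) = V^q(R')$, since $V^q$ only sees the abelianization), the element corresponding to a natural transformation $\phi$ is the image of the canonical generator of $E_{-q}S^{-q}$ under $\phi_R$. This is precisely the element $\phi(x)$ as defined in the paragraph preceding the proposition.

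It remains to verify that every set-natural endomorphism of $V^q$ is automatically $\Ab$-natural, so that $\Delta^q = \End(V^q)$ coincides with the full module $V^q(R')$ rather than just a submodule of primitives (as happens for $\Gamma^q$). This is the main conceptual point, and it is where the present situation differs fundamentally from that of $\Gamma^q$: because $R'$ is square zero, it is an abelian group object in $\gralgcat/E_*$, and by \S\ref{subsec:abelian-group-objects} the abelian group structure on any abelian group object is \emph{unique}, so the full subcategory $(\gralgcat/E_*)_{\mathrm{ab}}$ is $\Ab$-enriched with biadditive composition. Any $f\colon R'\to R'$ is therefore automatically a morphism in this additive category, and postcomposition by $f$ induces an $\Ab$-homomorphism on $\Hom_{\gralgcat/E_*}(R',A) \cong \Hom_{(\gralgcat/E_*)_{\mathrm{ab}}}(R',A^{\mathrm{ab}})$. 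This upgrades the Yoneda bijection to the desired isomorphism $\Delta^q \cong V^q(R')$. The only real subtlety in the whole argument is the last step — ensuring that no primitivity condition intervenes — and it is resolved precisely because we have already cut down the representing object by killing squares in the augmentation ideal.
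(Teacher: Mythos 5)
There is a genuine gap at the very first step: the claimed natural bijection $\Hom_{\gralgcat/E_*}(R',A)\cong \Hom_{(\gralgcat/E_*)_{\mathrm{ab}}}(R',A^{\mathrm{ab}})$ is false, and with it the assertion that $V^q$ is represented by $R'$ on $\gralgcat/E_*$. Abelianization is a \emph{left} adjoint to the inclusion of $(\gralgcat/E_*)_{\mathrm{ab}}$, so the adjunction identifies maps out of $A^{\mathrm{ab}}$ into an abelian object with maps out of $A$ into that abelian object; it says nothing about maps out of $R'$ into a general (non-abelian) $A$, and you are in effect using abelianization as if it were a right adjoint (a coreflection). Concretely, a morphism $R'\ra A$ of augmented $\algapprox$-algebras corresponds to an element $a\in \wt{A}_{-q}$ for which the classifying map $\algapprox(E_*S^{-q})\ra A$ kills the ideal generated by squares, i.e.\ to a \emph{subfunctor} of $I^q$ cut out by vanishing conditions, whereas $V^q(A)=(\wt{A}/\wt{A}^2)_{-q}$ is a \emph{quotient} of $I^q(A)$. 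These differ already for $A=\algapprox(E_*S^{-q})$ itself: $V^q(A)$ is the large free module $\Delta^q$, while maps $R'\ra A$ require square-zero behavior in a free algebra. Since the representability fails, the Yoneda identification of $\End(V^q)$ with $\Hom_{\gralgcat/E_*}(R',R')\cong V^q(R')$ does not follow as written, and in particular the surjectivity of $\phi\mapsto\phi(x)$ — the real content of the proposition — is not established. (The closing paragraph inherits the same problem, and also has the variance slightly off: endomorphisms of a corepresented functor $\Hom(R',-)$ act by precomposition, not postcomposition.)

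The statement you want is rather that $V^q(A)\cong \Hom_{(\gralgcat/E_*)_{\mathrm{ab}}}(R',A/\wt{A}^2)$, i.e.\ $V^q$ is representable only after composing with the abelianization endofunctor. Your Yoneda strategy can be repaired, but it needs an extra argument: one must show that a natural endomorphism of $V^q$ on all of $\gralgcat/E_*$ is determined by, and can be constructed from, its restriction to abelian objects, using that the quotient map $A\ra A/\wt{A}^2$ induces an isomorphism on $V^q$; only then does Yoneda on $(\gralgcat/E_*)_{\mathrm{ab}}$ give $\End(V^q)\cong V^q(R')$. This is essentially what the paper does in a more hands-on way: injectivity uses that $I^q$ is corepresented by the free algebra $\algapprox(E_*S^{-q})$; additivity of set-endomorphisms is forced by naturality with respect to the fold map $A\otimes_{E_*}A\ra A$; and surjectivity is obtained by lifting $y$ to $\talgapprox(E_*S^{-q})_{-q}$, viewing it as a set-endomorphism of $I^q$, and transporting it along the endofunctor $A\mapsto A/\wt{A}^2$ to produce an endomorphism of $V^q$ hitting $y$. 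As it stands, your proposal is missing exactly this transport step, and the step it substitutes for it is incorrect.
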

\begin{proof}
It is straightforward to check, using naturality and the bijection
$\wt{A}_{-q}=\Hom_{\gralgcat/E_*}(\algapprox(E_*S^{-q}),A)$,
that an endomorphism $\phi$ is uniquely determined by the element
$\phi(x)$.  Thus it remains to show that the map of the proposition is
surjective.  

Next, suppose that $\phi$ is an endomorphism of the composite functor
$\gralgcat_{/E_*}\xra{Q^q} \Ab\ra \Set$.  The abelian group structure 
$Q^q(A)\times Q^q(A)\ra Q^q(A)$ is naturally isomorphic to the map
obtained by applying $Q^q$ to the fold map $A\otimes_{E_*} A\ra A$ of
augmented $\algapprox$-algebras.  Thus, by naturality, $\phi$ must
commute with this map.  That is, every set endomorphism of $Q^q$ is
automatically an abelian group endomorphism.  

Given $y\in
Q^q(\algapprox(E_*S^{-q}))$, choose any lift $y\in
\talgapprox(E_*S^{-q})_{-q}$ and consider the corresponding
endomorphism $\psi$ of the composite functor $\gralgcat/E_*\xra{I^q} \Ab\ra
\Set$, which sends an augmented algebra to the underlying set of its
augmentation ideal.  Because $A\mapsto A/\wt{A}^2$ is a functor from
$\gralgcat/E_*$ to itself, we can apply $\psi$ to
$I^q(A/\wt{A}^2)\approx Q^q(A)$ to obtain a natural abelian group
endomorphism of $Q^q$, and a straightforward calculation shows that
the evaluation of this endomorphism on the canonical generator is
exactly $y$, as desired.
\end{proof}

Thus, the $E_0$-module
$\Delta^q$ is isomorphic to the degree $-q$ part of the
indecomposable quotient of the augmented 
$E_*$-algebra: $\algapprox(E_*S^{-q})_{-q} \approx 
\bigoplus_{m\geq0} \cE{-q}B\Sigma_m^{-q\rho_m}$.  The rings
$\Delta^q$ admit a grading $\Delta^q\approx \bigoplus_{k\geq0}
\Delta^q[k]$, where 
\[
\Delta^q[k] \approx \Cok\left[\bigoplus_{0<j<p^k}
  \cE{-q}B(\Sigma_j\times \Sigma_{p^k-j})^{-q\rho_{p^k}}\ra 
\cE{-q}B\Sigma_{p^k}^{-q\rho_{p^k}}\right],
\]
the map being induced by the inclusion $\Sigma_j\times
\Sigma_{p^k-j}\subset \Sigma_{p^k}$.

\subsection{Relation between $\Gamma^q$ and $\Delta^q$}

There are isomorphisms of functors $U^q\approx U^{q+2k}$ and
$Q^q\approx Q^{q+2k}$ for all $k\in \Z$, because $E_*$ is an even
periodic graded ring.  The choice of such isomorphisms depends on a
choice of isomorphism $E_*\approx E_{*+2k}$ of $E_*$-modules.
We obtain the following consequence.
\begin{prop}\label{prop:periodicity-of-power-op-rings}
There are isomorphisms $\Gamma^q\approx \Gamma^{q+2k}$ and
$\Delta^q\approx \Delta^{q+2k}$ of graded rings under $E_0$, for all
$k\in \Z$.
\end{prop}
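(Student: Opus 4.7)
The plan is to take the author's hint directly preceding the statement and make it precise: a unit $u\in E_{2k}$ supplies a natural isomorphism of functors $U^q\approx U^{q+2k}$ (and similarly $V^q\approx V^{q+2k}$), whence conjugation produces the required isomorphism of endomorphism rings.

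First I would choose a unit $u\in E_{2k}=\pi_{2k}E$, which exists precisely because $E$ is even periodic. For each $\algapprox$-algebra $A$, multiplication by $u$ on the underlying $E_*$-module gives an $E_*$-linear isomorphism $\sigma_A\colon A_{-q}\xra{\sim} A_{-q-2k}$, natural in $A\in\gralgcat$, and so a natural isomorphism $\sigma\colon U^q\xra{\sim} U^{q+2k}$ of functors $\gralgcat\to\Ab$. For the $V^q$ version, I would use that $\wt{A}\subseteq A$ is an $E_*$-submodule (being the kernel of the $E_*$-linear augmentation) and $\wt{A}^2\subseteq \wt{A}$ is $E_*$-stable, so $u$-multiplication descends to an isomorphism $(\wt{A}/\wt{A}^2)_{-q}\xra{\sim}(\wt{A}/\wt{A}^2)_{-q-2k}$ and gives a natural isomorphism $V^q\xra{\sim} V^{q+2k}$ of functors $\gralgcat/E_*\to\Ab$.

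Next, conjugation $\phi\mapsto \sigma\circ\phi\circ\sigma^{-1}$ defines ring isomorphisms $\Gamma^q\xra{\sim}\Gamma^{q+2k}$ and $\Delta^q\xra{\sim}\Delta^{q+2k}$. To check these are \emph{under $E_0$}, I would observe that the structure map $E_0\to \Gamma^q$ sends $a$ to the natural transformation ``multiplication by $a$ on $A_{-q}$'', and its image under conjugation by $\sigma$ is the operation $x\mapsto u\cdot a\cdot u^{-1}\cdot x = a\cdot x$, using commutativity of $E_*$; this is exactly the image of $a$ in $\Gamma^{q+2k}$. To check the grading is preserved, I would use that the decomposition $\Gamma^q=\bigoplus_{k\geq 0}\Gamma^q[k]$ arises from the weight splitting $\algapprox=\bigoplus_{m\geq 0}\algapprox_m$, which is a splitting of functors on $E_*$-modules; since $\sigma_A$ is $E_*$-linear for every $A$, conjugation by $\sigma$ carries the weight-$p^k$ part to the weight-$p^k$ part, and identically for $\Delta^q$.

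The argument is essentially formal and no step presents a real obstacle. The most delicate point is ensuring the $V^q$ version works, which reduces to the $E_*$-stability of $\wt{A}$ and $\wt{A}^2$ noted above. It is worth remarking that the resulting isomorphism depends on the choice of $u$, and since $E_0$ is not central in $\Gamma$, different choices will in general produce different ring isomorphisms $\Gamma^q\xra{\sim}\Gamma^{q+2k}$; but the statement asserts only existence of such an isomorphism, which is what we have produced.
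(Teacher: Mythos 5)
Your proposal is correct and follows essentially the same route as the paper: the paper's proof likewise obtains the isomorphisms $U^q\approx U^{q+2k}$ and $V^q\approx V^{q+2k}$ from a chosen isomorphism $E_*\approx E_{*+2k}$ of $E_*$-modules (your unit $u\in E_{2k}$) and then notes that, since these are isomorphisms of underlying $E_0$-modules, the induced ring isomorphisms are compatible with the subring $E_0$. Your explicit conjugation check $u\,a\,u^{-1}=a$ and the remark on weight-grading preservation simply make precise what the paper leaves implicit.
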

\begin{proof}
The only thing to note is that the isomorphisms $U^q\approx U^{q+2k}$
and $Q^q\approx Q^{q+2k}$ obtained from the periodicity of $E$
actually  give isomorphisms of
underlying $E_0$-modules, not merely of abelian groups.  Thus the
resulting ring isomorphisms are compatible with the inclusion of the
subring $E_0$.
\end{proof}

Next, we will produce a chain  of ring homomorphisms
\[\xymatrix{
{\Gamma^{q+1}} \ar[d]_{f_{q+1}}
& {\Gamma^q}  \ar[d]_{f_q}
& {\Gamma^{q-1}} \ar[d]^{f{q-1}}
\\
{\Delta^{q+1}} \ar[ur]|{g_{q+1}}
& {\Delta^q}   \ar[ur]|{g_q}
& {\Delta^{q-1}}
}\]
Any natural operation $\phi\colon I^q\ra I^q$ on augmentation ideals,
applied to an 
augmented $\algapprox$-algebra 
$A$, naturally induces an endomorphism $Q^q\ra Q^q$ by passage to the
indecomposables (i.e., evaluate $I^q$ on the quotient map $A\ra
A/\wt{A}^2$, which is a map of $\algapprox$ algebras).  Thus, we
obtain a ring homomorphism $f_q\colon  
\Gamma^q\ra \Delta^q$.  Explicitly, the map $f_q[k]\colon
\Gamma^q[k]\ra \Delta^q[k]$ amounts to the
natural map from the primitive subobject to the indecomposable
quotient of $\cE{-q}B\Sigma_{p^k}^{-q\rho_{p^k}}$. 

\begin{prop}\label{prop:fq-iso}
The map $f_q\colon \Gamma^q\ra \Delta^q$ is an isomorphism if $q$ is odd.
\end{prop}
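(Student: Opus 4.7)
The plan is to verify the isomorphism weight-by-weight using the algebraic desuspension of \S\ref{subsec:suspension-and-loop} together with Proposition \ref{prop:diagonal-is-null}. Both $\Gamma^q$ and $\Delta^q$ decompose as direct sums $\bigoplus_{k\geq 0}$ over weights $p^k$ inherited from the grading on $\algapprox$, and the map $f_q$ respects this decomposition. The $k=0$ case is immediate: both $\Gamma^q[0]$ and $\Delta^q[0]$ are canonically $E_0$, arising from $\algapprox_1(E_*S^{-q})=E_*S^{-q}$ in degree $-q$, and $f_q[0]$ is the identity.

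For $k\geq 1$, the key step is to apply the algebraic desuspension map $E'$ to $A=\algapprox(E_*S^{-q})$. Taking $M=N=E_*S^{-q}$ in Proposition \ref{prop:diagonal-is-null} and adjoining through the equivalence $\Sigma\dashv \Sigma^{-1}$, one finds that the map $E'\colon \algapprox_{p^k}(E_*S^{-q})\to \Sigma^{-1}\algapprox_{p^k}(E_*S^{-q+1})$ annihilates every exponential product $\algapprox_i\otimes \algapprox_{p^k-i}\to \algapprox_{p^k}$ with $1\leq i\leq p^k-1$. Consequently $E'$ vanishes on $\wt{A}^2_{-q}$ and factors through $\Delta^q[k]$ in degree $-q$, producing a map $\bar{E'}\colon \Delta^q[k]\to \Sigma^{-1}\algapprox_{p^k}(E_*S^{-q+1})_{-q}$.

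For $q$ odd, $q-1$ is even, so the target $\Sigma^{-1}\algapprox_{p^k}(E_*S^{-q+1})_{-q}$ identifies, via the Thom isomorphism and the periodicity of $E_*$, with an even-degree $E$-homology group of $B\Sigma_{p^k}$, a finitely generated free $E_0$-module. The composition $\Gamma^q[k]\hookrightarrow \algapprox_{p^k}(E_*S^{-q})_{-q}\xra{E'}$ target equals $\bar{E'}\circ f_q[k]$, and I would show this composition is injective with image of rank equal to the rank of $\Gamma^q[k]$, which by Proposition \ref{prop:gamma-finite-free} is a finite number independent of $q$. Since both $\Gamma^q[k]$ and the image are finitely generated free $E_*$-modules of the same rank, the composition is an isomorphism onto its image, which forces both $f_q[k]$ and $\bar{E'}$ to be isomorphisms.

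The main obstacle is verifying injectivity of $E'$ on the primitive subspace $\Gamma^q[k]$, and this is where the parity of $q$ is essential: for odd $q$, a parity argument on the grading of $\cE{*}B\Sigma_{p^k}^{-q\rho_{p^k}}$ should rule out primitive elements lying in the kernel of $E'$, whereas for even $q$ such elements can exist. A possibly cleaner execution would identify $\bar{E'}$ directly with the ring homomorphism $g_q\colon \Delta^q\to \Gamma^{q-1}$ described just before the proposition, reducing the claim that $f_q$ is an isomorphism to the claim that $g_q$ is an isomorphism; the latter should follow from the loop functor description of \S\ref{subsec:suspension-and-loop} together with the fact that $\Omega A$ has square-zero augmentation ideal, which is established via Proposition \ref{prop:diagonal-is-null}.
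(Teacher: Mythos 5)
There is a genuine gap: the two steps you defer --- ``I would show this composition is injective with image of rank equal to the rank of $\Gamma^q[k]$'' and ``a parity argument \dots should rule out primitive elements lying in the kernel of $E'$'' --- are precisely the hard content, and nothing in your sketch supplies them. The factorization of the desuspension through the indecomposables (via \eqref{prop:diagonal-is-null}) only reconstructs the definition of the map $g_q\colon \Delta^q\ra\Gamma^{q-1}$; it says nothing about injectivity of $g_q\circ f_q$ on primitives. Moreover the concluding rank count is logically invalid over $E_0$: an injective map of finitely generated free $E_0$-modules of equal rank need not be surjective (multiplication by $p$ already fails), so even granting injectivity of $\bar{E'}\circ f_q[k]$ you cannot conclude that $f_q[k]$, let alone $\bar{E'}$, is an isomorphism --- and surjectivity of $f_q[k]$ (primitives mapping \emph{onto} indecomposables) is exactly what needs proof. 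Your proposed ``cleaner execution'' is also circular relative to the actual structure of the argument: in the paper the statement that $g_q$ is an isomorphism is \emph{deduced from} \eqref{prop:fq-iso} together with Strickland's Euler-class computation (Theorems 8.5 and 8.6 of \cite{strickland-morava-e-theory-of-symmetric}); it does not follow merely from the loop-functor description and the square-zero property, which only make $g_q$ well defined.

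The paper's proof is much shorter and uses a different mechanism, which is where the hypothesis ``$q$ odd'' genuinely enters: by the argument of \cite{rezk-congruence-condition}*{Prop.\ 7.2}, for odd $q$ the Hopf algebra $\algapprox(E_*S^{-q})\approx\bigoplus_m\pi_*L_{K(n)}\Pfree_m\Sigma^{-q}E$ is a \emph{primitively generated exterior algebra} (the generator sits in odd degree), and for such a Hopf algebra the canonical map from primitives to indecomposables is an isomorphism --- which is exactly the statement that $f_q$ is an isomorphism. If you want to salvage your route, you would need an independent proof that the suspension is injective on $\Gamma^q[k]$ with the correct image, which in effect is the Strickland input used later for $g_q$; the Hopf-algebra argument avoids this entirely.
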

\begin{proof}
The argument of the proof of \cite{rezk-congruence-condition}*{Prop.\
  7.2} shows that $\algapprox(E_*S^{-q}) \approx \bigoplus
\pi_*L_{K(n)}\Pfree_m \Sigma^{-q}E$ is, as a Hopf algebra, a
primitively generated exterior algebra when $q$ is odd.  Thus, the
evident map from primitives to indecomposables is an isomorphism.  
\end{proof}

Recall that the loop construction (\S\ref{subsec:suspension-and-loop})
gives a functor 
$\Omega\colon \gralgcat_{/E_*}\ra (\gralgcat_{/E_*})_{\mathrm{ab}}\subset
\gralgcat_{/E_*}$.  
Thus, there is a natural isomorphism of abelian groups $Q^q(\Omega
A)\approx 
I^{q-1}(A)$, and hence any endomorphism of $Q^q$ 
induces an endomorphism of $I^{q-1}$.  We have thus defined a ring
homomorphism $g_q \colon \Delta^q\ra \Gamma^{q-1}$.  Explicitly, the
map $g_q[k]\colon \Delta^q[k]\ra \Gamma^{q-1}[k]$ is induced by the
``suspension'' map
\[
\cE{-q} B\Sigma_{p^k}^{-q \rho_{p^k}} \ra \cE{-q+1}
B\Sigma_{p^k}^{(-q+1)\rho_{p^k}},
\]
which factors through the quotient $\Delta^q[k]$ and lands in the
submodule $\Gamma^{q-1}[k]$.  

\begin{prop}
The map $g_q\colon \Delta^q\ra \Gamma^{q-1}$ is an isomorphism for all
$q$.  
\end{prop}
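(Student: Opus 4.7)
The map $g_q$ is described in the text as induced, on the weight-$p^k$ summand, by the suspension map $\cE{-q}B\Sigma_{p^k}^{-q\rho_{p^k}}\to \cE{-q+1}B\Sigma_{p^k}^{(-q+1)\rho_{p^k}}$; equivalently, via the identifications $\Delta^q\approx V^q(\algapprox(E_*S^{-q}))$ and $\Gamma^{q-1}\approx I^{q-1}(\algapprox(E_*S^{-q+1}))$, by the algebraic suspension $E\colon \Sigma\talgapprox(E_*S^{-q})\to \talgapprox(E_*S^{-q+1})$ of \S\ref{subsec:algebraic-functor}(10), read off in degree $-q+1$. The first task is to verify the well-definedness already asserted in the text: that $E$ kills decomposables and lands in primitives. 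The former follows directly from \eqref{prop:diagonal-is-null}. The latter follows by applying \eqref{prop:diagonal-is-null} in each off-diagonal bidegree of the comultiplication on $E(\Sigma x)$, using the naturality of $E$ with respect to the diagonal $E_*S^{-q}\to E_*S^{-q}\oplus E_*S^{-q}$ to identify the comultiplication of the suspension with the suspension of the comultiplication.

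Set $\sigma_q:=f_{q-1}\circ g_q\colon \Delta^q\to \Delta^{q-1}$. The second step is to reduce the proposition to the claim that each $\sigma_q$ is an isomorphism. Assuming $\sigma_q$ is an isomorphism, $g_q$ is injective and its image $I:=g_q(\Delta^q[k])\subseteq \Gamma^{q-1}[k]$ satisfies $f_{q-1}|_I$ an isomorphism onto $\Delta^{q-1}[k]$; a direct splitting argument yields $\Gamma^{q-1}[k]=I\oplus \ker(f_{q-1}|_{\Gamma^{q-1}[k]})$. Granting that $\Delta^q[k]$ and $\Gamma^{q-1}[k]$ are finitely generated free $E_*$-modules of the same rank $r_k$ (by the rational calculation of \eqref{prop:gamma-finite-free} applied to both, together with \eqref{prop:fq-iso} and \eqref{prop:periodicity-of-power-op-rings}), the summand $\ker(f_{q-1}|_{\Gamma^{q-1}[k]})$ has rank zero and is torsion-free as a submodule of a free module, hence zero. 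Therefore $I=\Gamma^{q-1}[k]$, so $g_q$ is surjective and thus an isomorphism.

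Finally, to prove $\sigma_q$ is an isomorphism, I claim that $\sigma_{q-1}\circ \sigma_q\colon \Delta^q\to \Delta^{q-2}$ agrees, up to a unit in $E_0^{\times}$, with the periodicity isomorphism of \eqref{prop:periodicity-of-power-op-rings}. Granting this, the analogous claim for $\sigma_q\circ \sigma_{q+1}$ shows $\sigma_q$ is both surjective and injective, hence an isomorphism. The main obstacle is the identification of the double suspension with periodicity. It reduces to the observations that the periodicity isomorphism is obtained by functoriality of $\algapprox$ from a shift isomorphism $E_*S^{-q}\approx \Sigma^{-2}E_*S^{-q+2}$ of $E_*$-modules, that the double algebraic suspension $E\circ E$ restricts to this same sphere shift on the weight-one summand $\algapprox_1\cong \mathrm{id}$, and that compatibility of $E$ with the monad structure (\S\ref{subsec:algebraic-functor}(10)) forces agreement on higher weights.
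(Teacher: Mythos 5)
Your steps (1) and (2) are fine, and in fact run parallel to the paper's own reduction: the paper likewise uses \eqref{prop:gamma-finite-free}, \eqref{prop:fq-iso} and \eqref{prop:periodicity-of-power-op-rings} to reduce everything to showing that a double-suspension composite $\Delta^{0}[k]\to\Gamma^{-2}[k]$ is an isomorphism, and your splitting/rank argument (using that $E_0$ is a domain and $\Gamma^{q-1}[k]$ is finitely generated free) is a legitimate way to perform that reduction.

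The gap is in step (3), which is where all the content of the proposition lives. Under the periodicity and Thom identifications, the composite $\sigma_{q-1}\circ\sigma_q$ is induced by the ``zero-section'' map $B\Sigma_{p^k}^{+}\to B\Sigma_{p^k}^{2\bar{\rho}_{p^k}}$, i.e.\ (dually) by multiplication by the Euler class of $\bar{\rho}_{p^k}\otimes\C$ in $E^0B\Sigma_{p^k}$. This class lies in the augmentation ideal (its restriction to a point vanishes), so the composite is in no sense a unit multiple of the periodicity isomorphism on $\cE{0}B\Sigma_{p^k}$; the assertion that it nevertheless induces an isomorphism after passing to indecomposables and primitives is exactly the nontrivial statement to be proved, not something that can be extracted formally. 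In particular, the appeal to ``compatibility of $E$ with the monad structure forces agreement on higher weights'' does not work: weight-$p^k$ operations for $k\geq 1$ are not composites of weight-one operations (weight one is just $\algapprox_1\approx\mathrm{id}$), so knowing the suspension on the weight-one summand and its compatibility with $\mu$ places no constraint on its effect on $\Delta[k]$; even quadratic generation of $\Gamma$ by weight-$p$ classes is a \emph{conclusion} of this paper and would not help here. The paper closes this gap by citing Theorems 8.5 and 8.6 of \cite{strickland-morava-e-theory-of-symmetric}, which show that the image of $\mathrm{Prim}\,E^0B\Sigma_{p^k}\to\mathrm{Ind}\,E^0B\Sigma_{p^k}$ is generated by the Euler class of $\bar{\rho}_{p^k}\otimes\C$; dualizing, the image of the double suspension out of $\Delta^{0}[k]$ is exactly $\Gamma^{-2}[k]$, and then the freeness and equal-rank statement of \eqref{prop:gamma-finite-free} finishes the argument. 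Some input of this kind about the Morava $E$-theory of symmetric groups is unavoidable, and your proposal as written does not supply it.
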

\begin{proof}
In light of \eqref{prop:gamma-finite-free} and \eqref{prop:fq-iso}, it
is enough to show that the 
composite
\[
\Delta^{2q}[k] \xra{g_{2q}} \Gamma^{2q-1}[k] \xra[\sim]{f_{2q-1}}
\Delta^{2q-1}[k] \xra{g_{2q-1}} \Gamma^{2q-2}[k]
\]
is an isomorphism.  Because $E_*$ is $2$-periodic, it is
enough to consider the 
case $q=0$.  Explicitly, we need to show that if we apply
completed $E_*$-homology to the ``zero-section'' map
\[
B\Sigma_{p^k}^+ \ra B\Sigma_{p^k}^{2\bar{\rho}_{p^k}}\approx
\Sigma^{-2} B\Sigma_{p^k}^{2\rho_{p^k}},
\]
the image is exactly the submodule $\Gamma^{-2}[k]$.   (Here
$\bar{\rho}_{p^k}$ denotes the reduced real representation, so that
$\rho_{p^k}=\R\oplus \bar{\rho}_{p^k}$.) 
That this is
the case follows by Theorems 8.5 and 8.6 of
\cite{strickland-morava-e-theory-of-symmetric}, where the result is
stated in ``dual'' form.  Specifically, Strickland proves that
$\mathrm{Prim} 
E^0B\Sigma_{p^k} \ra \mathrm{Ind} E^0B\Sigma_{p^k}$ is generated by
the Euler class of $\bar{\rho}_{p^k}\otimes \C \approx
2\bar{\rho}_{p^k}$, where $\mathrm{Prim}$ is the kernel of
restrictions, and $\mathrm{Ind}$ the quotient of transfers, along
$\Sigma_i\times \Sigma_{p^k-i}\subset \Sigma_{p^k}$.  
\end{proof}

\begin{cor}\label{cor:gamma-delta-iso-free}
All of the rings $\Gamma^q$ and $\Delta^q$ are isomorphic as 
graded rings under $E_0$.  Furthermore, each of the modules $\Gamma^q[k]$ and
$\Delta^q[k]$ are finitely generated free $E_0$-modules.
\end{cor}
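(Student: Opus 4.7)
The plan is to assemble the chain of ring homomorphisms established in the preceding three propositions into a single cycle of isomorphisms. By Proposition \ref{prop:periodicity-of-power-op-rings}, the isomorphism class of $\Gamma^q$ as a graded ring under $E_0$ depends only on the parity of $q$, and similarly for $\Delta^q$. Thus it suffices to show that $\Gamma^0$, $\Gamma^1$, $\Delta^0$, and $\Delta^1$ all lie in a single isomorphism class under $E_0$, after which periodicity will propagate the identification to every $\Gamma^q$ and $\Delta^q$.

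To do this, I would first invoke Proposition \ref{prop:fq-iso} at $q=1$ to obtain $f_1\colon \Gamma^1\xra{\sim}\Delta^1$, and then apply the preceding proposition (on $g_q$) at $q=1$ to obtain $g_1\colon \Delta^1\xra{\sim}\Gamma^0$. Together these give $\Gamma^1\approx\Delta^1\approx\Gamma^0$. To bring $\Delta^0$ into this class I would use $g_0\colon \Delta^0\xra{\sim}\Gamma^{-1}$ combined with the periodicity isomorphism $\Gamma^{-1}\approx\Gamma^1$ from Proposition \ref{prop:periodicity-of-power-op-rings}. All four of the rings $\Gamma^0,\Gamma^1,\Delta^0,\Delta^1$ are thereby linked by isomorphisms of graded rings under $E_0$, which is all that is needed.

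For the freeness statement, Proposition \ref{prop:gamma-finite-free} already asserts that each $\Gamma^q[k]$ is a finitely generated free $E_*$-module (with rank independent of $q$). Since the maps $f_q$, $g_q$, and the periodicity isomorphisms have all been constructed on each $[k]$-summand separately—the first two by their explicit descriptions at the level of $\cE{-q}B\Sigma_{p^k}^{-q\rho_{p^k}}$, and periodicity by tensoring with the invertible $E_*$-module $E_*S^{2k}$—the chain above restricts to isomorphisms $\Delta^q[k]\approx\Gamma^{q'}[k]$ of $E_0$-modules for suitable $q'$. Hence the $\Delta^q[k]$ are also finitely generated free $E_0$-modules.

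Because every ingredient has already been proved, there is essentially no substantive obstacle: the proof is a bookkeeping exercise, and the only real care required is to track the parity of $q$ in the cycle of isomorphisms so that the two parity classes within each family ($\Gamma$ and $\Delta$) are both reached.
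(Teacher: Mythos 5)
Your proposal is correct and takes essentially the same route as the paper: the paper simply chains $\Delta^0\xra{g_0}\Gamma^{-1}\xra{f_{-1}}\Delta^{-1}\xra{g_{-1}}\Gamma^{-2}$ and then invokes the periodicity isomorphisms \eqref{prop:periodicity-of-power-op-rings} and the freeness statement \eqref{prop:gamma-finite-free}, which is exactly the combination of ingredients you use (your chain just starts at a different value of $q$). Your explicit remark that the isomorphisms respect the $[k]$-grading, giving freeness of each $\Delta^q[k]$, is a slightly more detailed version of the paper's one-line appeal to \eqref{prop:gamma-finite-free}.
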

\begin{proof}
  By the above, we have isomorphisms $\Delta^0 \xra{g_0} \Gamma^{-1}
  \xra{f_{-1}} \Delta^{-1} \xra{g_{-1}} \Gamma^{-2}$, and the general
  isomorphism 
  follows by \eqref{prop:periodicity-of-power-op-rings}.  The freeness
  follows from \eqref{prop:gamma-finite-free}.  
\end{proof}

Finally, we have the following result about the rank of the free $E_0$-modules
$\Gamma[k]$ (and hence of $\Delta[k]$).
\begin{prop}\label{prop:gamma-ranks}
The ranks of $\Gamma[k]$ and $\Delta[k]$ (as left $E_0$-modules) are
given by the 
generating series 
\[
\sum_k \rank \Gamma[k]\cdot T^k = \bigl[ (1-T)(1-pT)\cdots
(1-p^{n-1}T)\bigr]^{-1},
\]
where $n$ is the height of the formal group associated to $E$.
\end{prop}
\begin{proof}
As noted in the proof of \eqref{prop:gamma-finite-free},
$\Gamma[k]\subseteq \cE{0}B\Sigma_{p^k}$ is the inclusion of a summand
of a free module.  Taking $E_0$-module duals gives a projection 
$E^0B\Sigma_{p^k} \ra E^0B\Sigma_{p^k}/I_{p^k}$ which admits an
$E_0$-module retraction.  Here $I_{p^k}$ is the ideal generated by
transfers from subgroups $\Sigma_r\times \Sigma_{p^k-r}$ for
$0<r<p^k$.  

The $E_0$-rank of $\Gamma[k]$ is equal to the $E_0$-rank of
$E^0B\Sigma_{p^k}/I_{p^k}$.  Strickland
\cite{strickland-morava-e-theory-of-symmetric}*{Thm.\ 1.1} computes
its rank over $E_0$ as
\[
\rank E^0B\Sigma_{p^k}/I_{p^k}  =  \len{\{\text{subgroups
    $A\leq (\Q_p/\Z_p)^n$ with $\len{A}=p^k$}\}}. 
\]
A standard combinatorial argument gives the  generating
series.

\end{proof}

For the remainder of this paper, we write $\Delta$ for the ring
$\Delta^0$; this is the ring we will show is Koszul.

\subsection{Aside:  $(\Gamma^q)^{\sm}_{\mathfrak{m}}$ is the ring of
  additive   operations on the $\pi_*$ of $K(n)$-local $E$-algebras} 
\label{subsec:aside-on-additive-ops}

We here describe the relation between the rings $\Gamma^q$ defined
above, and the rings  of additive operations on homotopy groups of
$K(n)$-local commutative $E$-algebras.

An \dfn{additive operation} in degree $-q$ is a natural endomorphism of
the functor 
\[
\pi_{-q} \colon h\Alg{E,K(n)} \ra \Ab
\]
which computes the $(-q)$th homotopy group of a $K(n)$-local commutative
$E$-algebra.  The ring of additive operations in degree $-q$ is
$\End(\pi_{-q})$ .  
The functor $\pi_{-q}$ (as a functor to sets) is corepresented by the
object 
$L_{K(n)}\Pfree(\Sigma^{-1}E)$, whence 
\[
\End(\pi_{-q})\approx \Ker\bigl[ \pi_{-q}L_{K(n)}\Pfree(\Sigma^{-q}E) \ra
\pi_{-q}L_{K(n)}\Pfree(\Sigma^{-q}E\vee \Sigma^{-q}E)\bigr], 
\]
where the map is induced by the diagonal map $\Sigma^{-q}E\ra
\Sigma^{-q}E\vee \Sigma^{-q}E$.  

\begin{prop}
As rings under $E_0$, $\End(\pi_{-q})\approx
(\Gamma^q)^{\sm}_{\mathfrak{m}}$.  
\end{prop}
\begin{proof}
Recall that by definition $\Gamma^q$ sits in an exact sequence 
\[0\ra \Gamma^q\xra{i}
\algapprox(E_*S^{-q})_{-q}\xra{j} \algapprox(E_*S^{-q}\oplus
E_*S^{-q})_{-q}.\]
We will show that this sequence remains exact after taking
$\mathfrak{m}$-adic completion.
By the isomorphism  of \eqref{subsec:algapprox-properties}(4) and
the exact sequence defining $\End(\pi_{-q})$ given above, the
proposition follows.

To prove the claim, note that the  map $i$ is split (as observed in
the proof of \eqref{prop:gamma-finite-free}, whence it is an
inclusion of a direct sum decomposition
$\algapprox(E_*S^{-1})_{-q}\approx \Gamma^q\oplus N$, and $j$ factors
through an inclusion $N\ra \algapprox(E_*S^{-q}\oplus
E_*S^{-q})_{-q}$.  Recall that $\algapprox$ applied to a free
$E_*$-module is a free module, whence all terms in the sequence, as
well as $N$, are free $E_0$-modules.

Thus, to show that applying $\mathfrak{m}$-adic completion to the
above sequence yields another exact sequence, it suffices to show that
the map $N^{\sm}_\mathfrak{m}\ra (\algapprox(E_*S^{-q}\oplus
E_*S^{-q})_{-q})^\sm_{\mathfrak{m}}$ induced by $j$ is injective.
In fact, if $j\colon N\ra N'$ is any
monomorphism of \emph{free} $E_0$-modules, then the induced map on
$\mathfrak{m}$-adic completions
also a monomorphism.  To see this, simply note that the completion of
$\bigoplus_{i\in I} E_0$ can 
be explictly identified with the set of tuples $(a_i)\in
\prod_{i\in I} E_0$ such that for each $k\geq0$, $a_i\in
\mathfrak{m}^k$ for all but finitely many $i$.

\end{proof}

\section{Koszul rings}
\label{sec:koszul-rings}

In this section, we develop the theory of Koszul rings in terms
of the bar construction (following the original
\cite{priddy-koszul-resolutions}), and in the
generality we need.  Specifically, we describe the theory for a ring
$A$ which contains a commutative ring $R$, but which is \emph{not
  central}  
$A$.
I believe these results are standard, but I do not know a convenient
reference in the literature.  In any case, we need to set up the
interpretation of the Koszul property in terms of the bar
construction, for our results in \S\ref{sec:relation-standard-res}.

Furthermore, we will show that a ring $A$ which is Koszul in our sense
is necessarily \emph{quadratic} \eqref{prop:koszul-implies-quadratic}.
Once we show that the 
ring $\Delta$ is Koszul in our sense, we will have thus proved that it
is quadratic.

In the following let $A=\bigoplus_{k\geq0} A[k]$ be a graded associative ring, and
suppose that $R=A[0]$ is commutative.  It is important that we do
\emph{not} assume that 
$R$ is central in $A$.   We write
$\epsilon\colon A\ra R$ for the evident augmentation map.

\subsection{Bar constructions for rings}
\label{subsec:bar-construction-rings}

Let $M$ be a right $A$-module, and let $N$ be a left $A$-module.
The \dfn{two-sided bar construction} $\B(M,A,N)$ is the simplicial
abelian group
defined by 
$$\B_q(M,A,N)=M\otimes_R\underbrace{A\otimes_R\cdots\otimes_R A}_{\text{$q$
    copies}}\otimes_R N,$$  
with face and boundary maps defined in the usual way.  

Let $\mc{N}\B(M,A,N)$ denote the \emph{normalized} chain complex obtained
from the bar resolution, (obtained by quotienting out by the image of
degeneracy operators); we have
$H_*\mc{N}\B(M,A,N)=H_*\B(M,A,N)$. 

Let $\rB(A)=\B(R,A,R)$, where $R$ is viewed as a left or right
$A$-module using the projection $\epsilon\colon A\ra A[0]\approx R$
defined by $\epsilon(A[m])=0$ for $m>0$.
The complex $\rB(A)$ inherits a grading from the grading on $A$, so
that there is an isomorphism of complexes $\rB(A)\approx \bigoplus
\rB(A)[m]$, where 
$$\rB_q(A)[m] \approx \bigoplus_{m_1+\cdots+m_q=m}
A[m_1]\otimes_R\cdots\otimes_R A[m_q],$$
where each index $m_i>0$.  
Thus the 
homology of $\rB(A)$ is graded, too:
$$H_*\rB(A) \approx \bigoplus_{m\geq0} H_*\rB(A)[m].$$
\begin{prop}
We have that 
$$H_0\rB(A) \approx A[0]=R,$$
and that 
$$H_q\rB(A)[m]=0\quad\text{for $q>m$.}$$
\end{prop}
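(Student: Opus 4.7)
The plan is to observe that both statements are immediate bookkeeping consequences of the given grading formula, once we commit to working with the normalized bar complex $N\rB(A)$. Since $H_*\rB(A)\approx H_*N\rB(A)$ as recalled in \S\ref{subsec:bar-construction-rings}, I may use whichever is more convenient, and in $N\rB_q(A)$ each tensor factor is required to lie in the positive-weight part $\bigoplus_{k\geq1}A[k]$, so the formula
\[
\rB_q(A)[m]\approx\bigoplus_{m_1+\cdots+m_q=m}A[m_1]\otimes_R\cdots\otimes_R A[m_q]\qquad(m_i\geq1)
\]
is literally correct and all that is needed is the elementary fact that $q$ strictly positive integers sum to at least $q$.

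For the vanishing $H_q\rB(A)[m]=0$ when $q>m$: the module $\rB_q(A)[m]$ is a direct sum indexed by compositions $m=m_1+\cdots+m_q$ with each $m_i\geq1$, and no such composition exists when $q>m$. Hence the chain group is already zero in that bidegree and its homology vanishes trivially. For the identification of $H_0$: the empty-tensor convention gives $\rB_0(A)[0]=R$ and $\rB_0(A)[m]=0$ for $m>0$, while the same counting argument applied with $q=1$ yields $\rB_1(A)[0]=0$. Consequently, in weight $m=0$ the chain complex is concentrated in degree $0$, giving $H_0\rB(A)[0]\approx R$; and in each weight $m>0$ the target of the differential is already zero, so $H_0\rB(A)[m]=0$. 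Summing over $m$ produces $H_0\rB(A)\approx R$.

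There is essentially no obstacle here; the only thing to be careful about is the implicit passage to the normalized bar complex, which is what legitimizes the condition $m_i\geq 1$ in the grading formula. Once that is in place, the proof reduces to noticing that one cannot express $m$ as a sum of more than $m$ positive integers.
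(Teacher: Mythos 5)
Your proposal is correct and follows essentially the same route as the paper: the paper's proof consists of the single observation that the normalized complex satisfies $N\rB(A)_q[m]=0$ when $q>m$ or when $q=0$ and $m>0$, which is exactly the counting of compositions into positive parts (and the weight-$0$ identification of $R$) that you spell out.
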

\begin{proof}
The normalized complex $\mc{N}\rB(A)$ is such that $\mc{N}\rB(A)_q[m]=0$ if
$q>m$, or if $q=0$ and $m>0$.
\end{proof}

Let $\B(A) \defeq \B(A,A,A)$, the ``big'' two-sided bar construction
on $A$.  Since $\epsilon \colon A\ra R$ is a map of $A$-bimodules, there is
an induced surjective map $\B(A)\ra \rB(A)$ of complexes.  Let
$\widetilde{A}=\ker \epsilon$; this is an $A$-bimodule, so we can define
complexes
\[
\hat{\B}(A)\defeq \B(A,A,\widetilde{A}),\quad 
\check{\B}(A)\defeq \B(\widetilde{A},A,A), \quad
\ddot{\B}(A) \defeq \B(\widetilde{A},A,\widetilde{A}),
\]
each of which is naturally a subcomplex of $\B(A)$.  (The notation is
meant to align with the analogous notation for certain subcomplexes of
the nerve of the partition poset, see \eqref{subsec:partition-complex}.)
\begin{prop}\label{prop:exact-bar-complexes}
The sequence of complexes
\[
0\ra \ddot{\B}(A) \xra{(\text{incl.},-\text{incl.})} \hat{\B}(A)
\oplus \check{\B}(A) \xra{(\text{incl.},\text{incl.})} \B(A) \ra 
\rB(A)\ra 0
\]
is exact.
\end{prop}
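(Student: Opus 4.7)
The plan is to reduce the claim to a simplicial-degree-wise verification that exploits the splitting $A=R\oplus\widetilde{A}$ as an $R$-bimodule, which exists because $R=A[0]$ is a direct summand of the graded ring $A$.  All four complexes in the sequence are of the form $\B_q(X,A,Y)=X\otimes_R A^{\otimes q}\otimes_R Y$ for appropriate $R$-bimodules $X,Y\in\{R,\widetilde{A},A\}$ (with the same face and degeneracy operators), so exactness at each of the four positions reduces to an exactness statement for the corresponding $R$-modules in each fixed simplicial degree $q$.

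Fixing $q$, I would distribute the $R$-bimodule splitting $A=R\oplus\widetilde{A}$ over the outer two tensor factors of $\B_q(A)=A\otimes_R A^{\otimes q}\otimes_R A$.  This produces a direct sum decomposition into four pieces indexed by which of $R$ or $\widetilde{A}$ appears in the first and last slot.  The four subobjects are then identified as sums of these pieces: $\ddot{\B}_q(A)$ is the single summand $\widetilde{A}\otimes_R A^{\otimes q}\otimes_R\widetilde{A}$; $\hat{\B}_q(A)$ is the direct sum of the two summands whose last slot is $\widetilde{A}$; $\check{\B}_q(A)$ is the direct sum of the two summands whose first slot is $\widetilde{A}$; and the map $\B_q(A)\to\rB_q(A)$, defined via $\epsilon$ on the outer two factors, is simply the projection onto the $R\otimes_R A^{\otimes q}\otimes_R R$ summand.

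Given this picture, the exactness reduces to a Mayer--Vietoris-style check for the submodules $U=\hat{\B}_q(A)$ and $V=\check{\B}_q(A)$ of $X=\B_q(A)$.  Injectivity of $\ddot{\B}_q(A)\to\hat{\B}_q(A)\oplus\check{\B}_q(A)$, $z\mapsto(z,-z)$, is immediate; exactness at $\hat{\B}_q(A)\oplus\check{\B}_q(A)$ amounts to the identity $U\cap V=\ddot{\B}_q(A)$, which is visible from the decomposition; exactness at $\B_q(A)$ follows because $U+V$ equals the sum of all summands in which at least one outer factor is $\widetilde{A}$, which is exactly the kernel of the projection onto $R\otimes_R A^{\otimes q}\otimes_R R$; and the projection onto $\rB_q(A)$ is surjective because $R\hookrightarrow A$ is a section of $\epsilon$.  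There is no real obstacle here; the only thing to double check is that all inclusions and projections involved are compatible with the simplicial face and degeneracy maps, which is built in because every map in sight is induced by an $A$-bimodule map on the outer factors ($\epsilon$, the inclusion $\widetilde{A}\hookrightarrow A$, or the inclusion $R\hookrightarrow A$), and the bar differentials only touch the factor-to-factor multiplication and the outer $A$-module actions.
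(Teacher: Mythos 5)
Your proposal is correct and takes essentially the same route as the paper: the paper also argues degreewise, decomposing $\B(A)_q$ into graded summands $A[m_0]\otimes_R\cdots\otimes_R A[m_{q+1}]$ and identifying $\hat{\B}(A)_q$, $\check{\B}(A)_q$, $\ddot{\B}(A)_q$ as the summands with $m_{q+1}>0$, $m_0>0$, or both, which is exactly your splitting $A=R\oplus\widetilde{A}$ applied to the two outer factors. The Mayer--Vietoris bookkeeping you spell out is precisely the paper's ``the result follows easily'' step, and your observation that only degreewise exactness is needed (the maps are already chain maps) is the right justification.
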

\begin{proof}
In degree $q\geq0$, $\B(A)_q$ has the form
\[ 
\B(A)_q \approx \bigoplus_{m_0,\dots,m_{q+1}\geq0} A[m_0]\otimes_R
\cdots \otimes_R A[m_{q+1}].
\]
It is straightforward to identify the subgroups $\hat{\B}(A)_q$,
$\check{\B}(A)_q$, 
and $\ddot{\B}(A)_q$ as consisting of those summands with either
$m_{q+1}>0$, 
$m_0>0$, or both, as the case may be.  The result follows easily.
\end{proof}

\subsection{Definition of Koszul rings}
\label{subsec:koszul-def}

We say that the graded ring $A$ is \dfn{Koszul} if 
$$H_q\rB(A)[m]=0\quad \text{for $q<m$.}$$
In view of the previous proposition, this means that if $A$ is Koszul,
then $\rB(A)[m]$ is a chain complex of $R$-modules whose homology is
concentrated in the single dimension $m$.  

\begin{exam}\label{exam:koszul-tensor-algebra}
Let $V$ be an $R$-bimodule.  The \dfn{tensor algebra} $TV$ is defined by
$$TV \defeq \left(\bigoplus_{m\geq0}\underbrace{V\otimes_R\cdots \otimes_R
V}_{\text{$m$ copies}}\right).$$
It is a straightforward exercise to show that $H_q\rB(TV)[m]=0$ for
all $q\geq0$ and all $m>1$.  Thus $TV$ is Koszul.
\end{exam}

For a Koszul ring $A$, let 
\[
C[m] \defeq H_m\rB(A)[m].
\]
\begin{prop}\label{prop:koszul-ranks}
Suppose that $A$ is Koszul, and that each $A[m]$ is finitely generated
and projective as a left $R$-module.  Then each $C[m]$ is finitely
generated and projective as a left $R$-module.  

Suppose each $A[m]$ is as above, and we write
$\rank_{\mathfrak{P}} A[m] = a_m$ for the rank of
$A[m]_{\mathfrak{P}}$ as an $R_{\mathfrak{P}}$-module for some prime
$\mathfrak{P}\subset R$.  Then each
$C[m]$ has
\[
\sum_{m=0}^\infty \rank_{\mathfrak{P}} C[m]\cdot T^m = \left(
  \sum_{m=0}^\infty 
  (-1)^m a_m\cdot T^m
\right)^{-1} 
\]
in $\Z\powser{T}$.
\end{prop}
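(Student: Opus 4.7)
The plan is to use the Koszul hypothesis to realize $C[m]$ as the unique nonvanishing homology of a finite resolution by finitely generated projective right $R$-modules, and then to read off the ranks by an Euler characteristic computation. First I would verify that each $\rB_q(A)[m] \approx \bigoplus_{m_1+\cdots+m_q=m,\,m_i\geq 1} A[m_1]\otimes_R\cdots\otimes_R A[m_q]$ is finitely generated projective as a right $R$-module. Since this direct sum vanishes for $q > m$, the resolution has finite length. By induction on $q$, the required projectivity reduces to the following: if $X$ and $Y$ are $R$-bimodules that are each f.g.\ projective as right $R$-modules, then $X \otimes_R Y$ is also f.g.\ projective as a right $R$-module (via the right action on $Y$). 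This in turn follows by choosing a dual basis for $X$ as a right $R$-module to exhibit $X \otimes_R Y$ as a right $R$-module direct summand of $Y^n$.

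Next, the Koszul property yields, for each $m\geq 1$, an exact sequence of right $R$-modules
\[
0 \to C[m] \to \rB_m(A)[m] \xrightarrow{d_m} \rB_{m-1}(A)[m] \to \cdots \to \rB_1(A)[m] \to 0.
\]
Working from the right, since $\rB_1(A)[m] = A[m]$ is projective, the short exact sequence $0 \to \ker d_2 \to \rB_2(A)[m] \to \rB_1(A)[m] \to 0$ splits and $\ker d_2$ is a summand of the f.g.\ projective module $\rB_2(A)[m]$. Iterating this splitting, each successive kernel is f.g.\ projective, so in particular $C[m]$ is. The case $m = 0$ is trivial: $C[0] = R$, of rank $1$.

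For the rank formula, localize at $\mathfrak{P}$. Over the local ring $R_\mathfrak{P}$ each $A[m]_\mathfrak{P}$ is free as a right module of rank $a_m$, and choosing a right $R_\mathfrak{P}$-basis of $A[m_1]_\mathfrak{P}$ identifies $(A[m_1]\otimes_R A[m_2])_\mathfrak{P}$ with $A[m_2]_\mathfrak{P}^{a_{m_1}}$ as right $R_\mathfrak{P}$-modules; iterating gives $\rank_\mathfrak{P}\bigl(A[m_1]\otimes_R\cdots\otimes_R A[m_q]\bigr) = a_{m_1}\cdots a_{m_q}$. Taking the alternating sum of ranks in the exact sequence above yields
\[
\rank_\mathfrak{P} C[m] = \sum_{q=1}^{m} (-1)^{m-q} \sum_{\substack{m_1+\cdots+m_q=m \\ m_i\geq 1}} a_{m_1}\cdots a_{m_q}
\]
for $m\geq 1$. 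Setting $g(T) = \sum_{m\geq 1}(-1)^m a_m T^m$, the right-hand generating function is the geometric series $\sum_{q\geq 1}(-g(T))^q = -g(T)/(1+g(T))$; combined with $a_0 = c_0 = 1$ this gives the claimed identity
\[
\sum_{m\geq 0}(\rank_\mathfrak{P} C[m])\,T^m = \frac{1}{1+g(T)} = \Bigl(\sum_{m\geq 0}(-1)^m a_m T^m\Bigr)^{-1}.
\]

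The step requiring the most care is the bimodule bookkeeping: since $R$ is not assumed central in $A$, the left and right $R$-actions on each $A[m]$ may genuinely differ, so one must be precise about which action is used at each step. Once right-module projectivity and right-module rank are shown to be preserved under the iterated tensor products above, the remainder is a routine projective-resolution and Euler-characteristic computation.
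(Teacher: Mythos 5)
Your strategy is the same as the paper's (which only sketches this argument): use that the normalized complex $\rB(A)[m]$ vanishes above degree $m$, so Koszulness gives a finite exact sequence of right $R$-modules ending in $C[m]$, deduce projectivity by splitting off from the right, and get the Hilbert series identity by an Euler characteristic count after localizing. The projectivity half of your write-up is correct and is exactly the fact the paper invokes: your retract argument that $X\otimes_R Y$ is finitely generated projective as a right $R$-module when $X$ and $Y$ are (a right-module retraction $X\rightleftarrows R^n$ induces a retraction $X\otimes_R Y\rightleftarrows Y^n$ of right modules via the $Y$-action) is fine, and the generating-function algebra at the end is also fine once the ranks multiply.

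The gap is precisely at the step you flagged as delicate and then waved through: the claim that a right $R_{\mathfrak{P}}$-basis of $A[m_1]_{\mathfrak{P}}$ identifies $(A[m_1]\otimes_R A[m_2])_{\mathfrak{P}}$ with $A[m_2]_{\mathfrak{P}}^{a_{m_1}}$. Localization is taken with respect to the right action, which passes through $A[m_2]$, so $(A[m_1]\otimes_R A[m_2])_{\mathfrak{P}}\cong A[m_1]\otimes_R\bigl(A[m_2]_{\mathfrak{P}}\bigr)$; this tensor product uses the \emph{unlocalized} $A[m_1]$ and the \emph{left} $R$-action on $A[m_2]$, which need not invert $R\setminus\mathfrak{P}$, so a right basis of $A[m_1]_{\mathfrak{P}}$ gives no splitting. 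In fact the multiplicativity of localized ranks can genuinely fail for non-central bimodules: take $R=k\times k$ and $P=k$ with right action through the first projection and left action through the second; then $P$ is finitely generated projective of rank $1$ at the prime $\mathfrak{P}=0\times k$, yet $P\otimes_R P=0$. Moreover $A=R\oplus P$ (the tensor algebra on $P$) is Koszul with each $A[m]$ projective, and at this prime the stated identity reads $1+T=(1-T)^{-1}$, so no argument can close the gap in the generality in which you (and the proposition) phrase the rank statement. This is as much a caveat about the statement as about your proof: the paper's own proof asserts rank multiplicativity only ``if $R$ is a local ring,'' and in the intended application $R=E_0$ is local and the modules $\Delta[k]$ are finite and free, in which case $A[m_1]\otimes_R A[m_2]\cong A[m_2]^{a_{m_1}}$ as right modules globally and your computation goes through verbatim. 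So you should either assume $R$ local (equivalently, reduce to the case where each $A[m]$ is free as a right $R$-module) before the localization step, or record that hypothesis explicitly.
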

\begin{proof}
This is straightforward, using the fact that if $P$ and $Q$ are
finitely generated and projective as left $R$-modules, then so is
$P\otimes_R Q$, and that if $R$ is a local ring,  $\rank P\otimes_R Q
= (\rank P)(\rank Q)$. 
\end{proof}

\subsection{Koszul resolutions}
\label{subsec:koszul-resolutions}

Let $M$ be a right, and $N$ a left $R$-module.  Define a filtration
$\{F_m\}$ 
of
the normalized bar complex $\mc{N}\B(M,A,N)$, so that 
\[
F_m = F_m\mc{N}\B_q(M,A,N) \defeq M\otimes_R \left(
  \bigoplus_{m_1+\cdots+m_q\leq m} A[m_1]\otimes_R\cdots \otimes_R
  A[m_q]\right) \otimes_R N.
\]
There are isomorphisms of complexes 
\[
F_m/F_{m-1} \approx M\otimes_R \rB(A)[m] \otimes_R N,
\]
and thus a spectral sequence $E_1^{p,q}=H_p(M\otimes_R \rB(A)[q] \otimes_R N)
\Rightarrow H_{p+q}(\B(M,A,N))$.  This immediately gives the following.
\begin{prop}
Suppose that $A$ is Koszul.
If $M$ and $N$ are flat as right and left $R$-modules respectively,
then the $E_1$-term of the above spectral sequence collapses to a chain complex
of the form
\[
\cdots \ra M\otimes_R C[m] \otimes_R N \ra M\otimes_R C[m-1]\otimes_R
N\ra \cdots.
\]
\end{prop}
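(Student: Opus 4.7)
The plan is to directly compute the $E_1$-page of the spectral sequence, and then to read off the claimed chain complex as the $d_1$-differential acting on the diagonal. The argument is entirely formal once one invokes the flatness hypothesis to commute tensor product with homology.

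By the identification $F_m/F_{m-1} \approx M \otimes_R \rB(A)[m] \otimes_R N$ recorded in the excerpt, the $E_1$-term is by construction
\[
E_1^{p,q} \;=\; H_p\bigl(M \otimes_R \rB(A)[q] \otimes_R N\bigr),
\]
so no additional work is needed at this stage. Next, since $M$ is flat as a right $R$-module, the functor $M \otimes_R (-)$ is exact on complexes of left $R$-modules; similarly $(-) \otimes_R N$ is exact on complexes of right $R$-modules. The complex $\rB(A)[q]$ is a complex of $R$-bimodules, so applying these two exact functors in turn yields the natural isomorphism
\[
H_p\bigl(M \otimes_R \rB(A)[q] \otimes_R N\bigr) \;\cong\; M \otimes_R H_p\bigl(\rB(A)[q]\bigr) \otimes_R N.
\]

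I would then invoke the Koszul hypothesis from \S\ref{subsec:koszul-def}: $H_p(\rB(A)[q]) = 0$ whenever $p \ne q$, while $H_q(\rB(A)[q]) = C[q]$ by definition of $C[q]$. Combining with the previous step, the $E_1$-page is concentrated on the diagonal $p = q$, with $E_1^{q,q} \cong M \otimes_R C[q] \otimes_R N$, and all off-diagonal entries vanishing.

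Finally, the $d_1$-differential is induced by the components of the bar differential which drop the filtration degree by one (the ``outer'' face operators), and in the $(p,q)$-indexing has bidegree $(-1,-1)$. Restricted to the diagonal it therefore assembles into precisely the chain complex
\[
\cdots \to M \otimes_R C[m] \otimes_R N \to M \otimes_R C[m-1] \otimes_R N \to \cdots
\]
of the statement. The only non-formal input is the flatness of $M$ and $N$, used to move the tensor products outside of homology; the rest is routine bookkeeping with the spectral sequence of a filtered complex, so I anticipate no serious obstacle.
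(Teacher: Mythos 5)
Your proof is correct and matches the paper's intent: the paper gives no separate argument, stating that the proposition follows immediately from the filtration isomorphism $F_m/F_{m-1}\approx M\otimes_R\rB(A)[m]\otimes_R N$, with flatness used exactly as you do to commute the tensor products past homology and the Koszul condition collapsing $E_1$ onto the diagonal. Your identification of the $d_1$-differential as the resulting chain complex is the same routine bookkeeping the paper leaves implicit.
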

This is the \dfn{Koszul complex} for computing $\Tor^A(M,N)$.  Taking
$M=A$ provides for any flat (resp.\ projective) $N$ a flat (resp.\
projective) $A$-module \dfn{Koszul resolution}
\[
\cdots \ra A\otimes_R C[m]\otimes_R N \ra A\otimes_R C[m-1]\otimes_R
N\ra\cdots \ra A\otimes_R C[0]\otimes_R N \ra N\ra 0,
\]
which is an exact sequence of left $A$-modules.

\subsection{Koszul rings are quadratic}

A graded ring $A$ is \emph{quadratic} if it is generated as a ring
under $R$ by
elements of 
degree one, and if all relations are generated by the homogeneous
relations of degree $2$.  
That is, $A$ is quadratic if the natural map
$$T(A[1])/I \ra A$$
is an isomorphism, where $T(V)$ represents the associative ring  over
$R$ freely 
generated by an $R$-bimodule $V$, and
$I$ is a two-sided ideal generated by a sub-bimodule $P\subseteq
A[1]\otimes_R A[1]\subset T(A[1])$. 

\begin{prop}\label{prop:koszul-implies-quadratic}
If $A$ is Koszul, and is flat as a left $R$-module, then $A$ is quadratic.  
\end{prop}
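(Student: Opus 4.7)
The plan is to extract both ``generation in degree one'' and ``quadraticity of the relations'' from the Koszul vanishing, applied in homological degrees $1$ and $2$ respectively. Throughout I exploit the fact that on the normalized bar complex the outer face maps $d_0$ and $d_q$ vanish in positive weight (because $\epsilon$ kills positive weight), so the differential $d\colon \rB_q(A)[m]\ra\rB_{q-1}(A)[m]$ reduces to an alternating sum of internal multiplications $A[m_i]\otimes_R A[m_{i+1}]\ra A[m_i+m_{i+1}]$.

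First, the vanishing of $H_1\rB(A)[m]$ for $m\geq 2$ says precisely that the multiplication
\[
\mu_m\colon \bigoplus_{\substack{i+j=m\\i,j\geq 1}} A[i]\otimes_R A[j]\;\ra\; A[m]
\]
is surjective, and a straightforward induction on $m$ then gives a surjection $A[1]^{\otimes_R m}\twoheadrightarrow A[m]$. Thus $A$ is generated in degree one over $R$, and the tensor algebra map $T(A[1])\ra A$ is surjective. Set $P\defeq \ker\bigl(A[1]\otimes_R A[1]\ra A[2]\bigr)$ and form the quadratic algebra $\widetilde A\defeq T(A[1])/\langle P\rangle$. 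Then the tensor algebra map factors through a surjection $\bar\pi\colon \widetilde A\ra A$ which is an isomorphism in weights $\leq 2$ by construction. I will show by induction on $m$ that $\bar\pi[m]$ is an isomorphism for every $m$.

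Fix $m\geq 3$ and assume $\bar\pi[i]$ is an isomorphism for all $i<m$. For each $q\geq 2$, every summand of $\rB_q(\widetilde A)[m]=\bigoplus \widetilde A[m_1]\otimes_R\cdots\otimes_R \widetilde A[m_q]$ has each $m_i<m$, so the induction hypothesis yields a natural identification $\rB_q(\widetilde A)[m]\approx \rB_q(A)[m]$. For $q\geq 3$, each internal multiplication appearing in the differential lands in a weight $\leq m-1$ (since the unaffected factors sum to at least $1$), so $d_q^{\widetilde A}=d_q^A$ under this identification; the only boundary that differs is $d_2$, where $\bar\pi\circ d_2^{\widetilde A}=d_2^A$. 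Writing $\mu^{\widetilde A}=d_2^{\widetilde A}$ and $\mu^A=d_2^A$, these facts combine to give
\[
\operatorname{im}(d_3^{\widetilde A})\;\subseteq\;\ker(\mu^{\widetilde A})\;\subseteq\;\ker(\mu^A)\;=\;\operatorname{im}(d_3^A)\;=\;\operatorname{im}(d_3^{\widetilde A}),
\]
where the first inclusion is $d^2=0$ in $\rB(\widetilde A)$, the second uses $\mu^A=\bar\pi\circ\mu^{\widetilde A}$, the central equality is the Koszul hypothesis $H_2\rB(A)[m]=0$, and the final equality is the identification at $q=3$. Hence $\ker(\mu^{\widetilde A})=\ker(\mu^A)$. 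Since $\widetilde A$ is generated in degree one, $\mu^{\widetilde A}$ is surjective, so
\[
\widetilde A[m]\;\approx\;\rB_2(\widetilde A)[m]/\ker(\mu^{\widetilde A})\;=\;\rB_2(A)[m]/\ker(\mu^A)\;\approx\;A[m],
\]
with the isomorphism induced by $\bar\pi$. This closes the induction.

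The main obstacle is really just the bookkeeping: verifying that for $q\geq 3$ every internal multiplication in $d_q\colon \rB_q[m]\ra \rB_{q-1}[m]$ lands in weight strictly less than $m$, so that the induction hypothesis genuinely transports the Koszul equality $\ker(\mu^A)=\operatorname{im}(d_3^A)$ to $\widetilde A$. The right-$R$-flatness hypothesis on $A$ enters to ensure that $P$ behaves as an honest $R$-sub-bimodule of $A[1]\otimes_R A[1]$ and that the tensor-product identifications in the bar complex interact cleanly with such sub-bimodules; beyond this it plays no role in the combinatorial core of the argument.
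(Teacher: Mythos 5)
Your proof is correct, but it takes a genuinely different route from the paper. The paper deduces the proposition from the sharper two-way statement \eqref{prop:quadratic-if-vanishing-h1-and-h2}: it maps the tensor algebra $T(A[1])$ onto $A$, forms the short exact sequence of weight-graded complexes $0\ra K[m]\ra \rB(TA[1])[m]\ra \rB(A)[m]\ra 0$, uses the acyclicity of $\rB(TA[1])[m]$ for $m>1$ to identify $H_2\rB(A)[m]\approx H_1K[m]$, and then runs a diagram chase in which the right-$R$-flatness of $A$ is exactly what makes the comparison map $\beta_m$ surjective, so that $H_2$-vanishing translates into ``degree-$m$ relations are generated by lower-degree relations.'' You instead build the quadratic cover $\widetilde A=T(A[1])/\langle P\rangle$ directly and prove $\bar\pi\colon\widetilde A\ra A$ is an isomorphism by induction on weight, exploiting the fact that in weight $m$ the bar complex in homological degrees $\geq 2$ involves only weights $<m$, so the Koszul equality $\ker(\mu^A)=\operatorname{im}(d_3^A)$ transports to $\widetilde A$; the identification of differentials for $q\geq 3$ and the surjectivity/injectivity argument for $\bar\pi[m]$ all check out. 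Two remarks on the trade-offs: the paper's route also gives the converse direction of \eqref{prop:quadratic-if-vanishing-h1-and-h2}, which yours does not (and need not); on the other hand your argument never actually uses flatness --- your closing sentence misidentifies its role, since $P=\ker(A[1]\otimes_RA[1]\ra A[2])$ is automatically a sub-bimodule and your bar-complex identifications are tensor products of isomorphisms, which require no exactness --- so your proof in fact establishes the proposition without the flatness hypothesis, a mild strengthening of the stated result.
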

This result is well-known in the case that $R$ is central; see, for
instance, \cite{polishchuk-positselski-quadratic-algebras}.
This will follow from the sharper result
\eqref{prop:quadratic-if-vanishing-h1-and-h2} below.

\begin{prop}
The ring $A$ is generated by $A[1]$ if and only if
$H_1\rB(A)[m]=0$ for all $m>1$.
\end{prop}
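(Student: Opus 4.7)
The plan is to directly identify $H_1\rB(A)[m]$ with a concrete cokernel built from the multiplication of $A$, and then read off both implications almost immediately.

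First I would compute the homology in question using the normalized bar complex (which has the same homology as $\rB(A)$). For $m\geq1$, we have $\rB_0(A)[m]=0$, $\rB_1(A)[m]=A[m]$, and
\[
\rB_2(A)[m]=\bigoplus_{\substack{i+j=m\\ i,j>0}} A[i]\otimes_R A[j].
\]
On the normalized complex, the outer face maps $d_0$ and $d_q$ vanish identically, because each $A[k]$ with $k>0$ is in $\widetilde{A}=\ker\epsilon$. The differential $\rB_2(A)[m]\to \rB_1(A)[m]$ therefore reduces to $-\mu$, where $\mu$ is the multiplication $\bigoplus_{i+j=m,\,i,j>0}A[i]\otimes_R A[j]\to A[m]$. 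Since $\rB_0(A)[m]=0$ for $m>0$, we conclude
\[
H_1\rB(A)[m]\;\cong\; A[m]\Big/\sum_{\substack{i+j=m\\ i,j>0}} A[i]\cdot A[j].
\]

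For the forward direction, suppose $A$ is generated as a ring over $R$ by $A[1]$. Then every element of $A[m]$ with $m\geq2$ is an $R$-linear combination of products $a_1a_2\cdots a_m$ with each $a_\ell\in A[1]$, and each such product factors as $a_1\cdot(a_2\cdots a_m)\in A[1]\cdot A[m-1]$. Hence $A[m]$ is contained in the image of $\mu$, and $H_1\rB(A)[m]=0$.

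For the converse, I would induct on $m$ to show that each $A[m]$ lies in the subring $A'\subseteq A$ generated by $R$ and $A[1]$. The cases $m=0,1$ are immediate. For $m\geq2$, vanishing of $H_1\rB(A)[m]$ gives $A[m]=\sum_{i+j=m,\,i,j>0}A[i]\cdot A[j]$; since $0<i,j<m$, the inductive hypothesis ensures $A[i],A[j]\subseteq A'$, and therefore $A[m]\subseteq A'$. There is no real obstacle here; the only point requiring care is recognizing the differential in the normalized bar complex as (a sign times) the multiplication map, after which both directions are a single line.
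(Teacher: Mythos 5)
Your proposal is correct and takes essentially the same route as the paper: the paper also identifies $H_1\rB(A)[m]$ (using that the normalized complex has $\rB_0(A)[m]=0$ for $m>0$) with the cokernel of the multiplication map $\bigoplus_{k+\ell=m,\,k,\ell>0}A[k]\otimes_R A[\ell]\to A[m]$, and then concludes by the same induction on degree that you spell out. The sign on the bar differential is immaterial for the cokernel, so nothing further is needed.
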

\begin{proof}
Since $\rB_0(A)[m]=0$ for $m>0$, we have for each $m\geq2$ an exact
sequence
$$\bigoplus_{\substack{k+\ell=m\\ k,\ell>0}} A[k]\otimes_R A[\ell]
\xra{\text{mult}} A[m]\ra H_1\rB(A)[m]\ra 0.$$ 
Clearly, $H_1\rB(A)[m]=0$ if and only if $A[m]$ is spanned by products
of pairs of elements of strictly lower degree.  The result follows.
\end{proof}

\begin{prop}\label{prop:quadratic-if-vanishing-h1-and-h2}
Suppose $A$ is flat as a left $R$-module.
Then $A$ is quadratic if and only if 
$$H_1\rB(A)[m]=0\quad\text{for all $m>1$,}$$
and
$$H_2\rB(A)[m]=0\quad\text{for all $m>2$}.$$
\end{prop}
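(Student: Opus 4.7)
The plan is to prove both implications by comparing the bar construction of $A$ with that of a closely related ring. The easier (reverse) direction uses an induction on weight; the harder (forward) direction compares $\rB(A)$ with $\rB(T)$ where $T = T(A[1])$ is the tensor algebra, exploiting the vanishing $H_*\rB(T)[m] = 0$ for $m \geq 2$ from Example \ref{exam:koszul-tensor-algebra}.

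For the reverse direction (both vanishings $\Rightarrow$ quadratic), the first vanishing combined with the previous proposition shows $A$ is generated in degree $1$. Set $V := A[1]$, $P := \ker(V \otimes_R V \twoheadrightarrow A[2])$, and $A' := TV/(P)$. The natural surjection $\varphi \colon A' \twoheadrightarrow A$ is an isomorphism in degrees $\leq 2$ by construction. I will show by induction on $m \geq 3$ that $\varphi_m$ is an isomorphism. Assuming $\varphi_{m'}$ is an iso for $m' < m$, the induced map of complexes $\rB(A')[m] \to \rB(A)[m]$ is an isomorphism in simplicial degrees $q \geq 2$ (since each tensor factor has weight strictly less than $m$) and equals $\varphi_m$ in degree $q = 1$. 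The kernel complex $K$ is thus concentrated in simplicial degree $1$ with $K_1 = \ker\varphi_m$, giving the long exact sequence
\[
0 \to H_2\rB(A')[m] \to H_2\rB(A)[m] \to \ker\varphi_m \to H_1\rB(A')[m] \to H_1\rB(A)[m] \to 0.
\]
The terms $H_2\rB(A)[m]$ and $H_1\rB(A)[m]$ vanish by hypothesis (since $m \geq 3$), and $H_1\rB(A')[m] = 0$ by the previous proposition applied to $A'$ (which is generated in degree $1$ by construction). Exactness forces $\ker\varphi_m = 0$, completing the induction.

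For the forward direction ($A$ quadratic $\Rightarrow$ vanishings), the $H_1$ vanishing is the previous proposition. For $H_2\rB(A)[m] = 0$ when $m > 2$: write $A = TV/I$ with $V = A[1]$ and $I = (P)$, and consider the surjection $\rB(T)[m] \twoheadrightarrow \rB(A)[m]$ with kernel complex $K$. Since $H_*\rB(T)[m] = 0$ for $m \geq 2$ by the tensor algebra example, the long exact sequence gives $H_2\rB(A)[m] \cong H_1(K)$. Now $K_1 = I_m$, and it suffices to show that $d \colon K_2 \to K_1$ is surjective. An arbitrary generator $\tilde a \cdot p \cdot \tilde b$ of $I_m$ with $p \in P$ and $\deg \tilde a + \deg \tilde b = m - 2 \geq 1$ arises as $-d$ of an explicit element of $K_2$: if $\deg \tilde a \geq 1$, take $\tilde a \otimes (p \tilde b) \in V^{\otimes \deg \tilde a} \otimes_R V^{\otimes(\deg \tilde b + 2)}$, which lies in $K_2$ because $p \tilde b \in I$ maps to zero in $A$; otherwise $\tilde a \in R$ can be absorbed into $p$ to give $(\tilde a p) \otimes \tilde b \in K_2$. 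Summing over such generators, $d(K_2) = K_1$, so $H_1(K) = 0$.

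The main technical obstacle is the forward direction, specifically identifying the kernel complex $K$ and verifying that the proposed elements lie in $K_2$. Flatness of $A$ as a right $R$-module enters to ensure that tensor products over the (non-central) subring $R$ behave well, in particular that the natural image of $V^{\otimes a} \otimes_R I_{b+2}$ in $V^{\otimes a} \otimes_R V^{\otimes(b+2)}$ lies in the kernel of the projection to $\rB_2(A)[m]$.
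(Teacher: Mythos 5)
Your argument is correct, and it is a genuine variant of the paper's proof rather than a reproduction of it. The forward implication is essentially the paper's computation: both compare $\rB(A)[m]$ with the acyclic complex $\rB(T(A[1]))[m]$ via the kernel complex $K$, identify $H_2\rB(A)[m]\cong H_1K[m]$, and show the boundary $K_2\to K_1=I_m$ is onto; you do this by exhibiting explicit preimages $\tilde a\otimes(p\tilde b)$, resp.\ $(\tilde a p)\otimes\tilde b$, whereas the paper instead produces a generating set for all of $K_2[m]$ (the surjection $\beta_m$, which is where it invokes flatness) and then reads both directions at once out of the exact sequence $K_2[m]\xra{\alpha_m} K_1[m]\to H_2\rB(A)[m]\to0$: there, $H_2\rB(A)[m]=0$ is equivalent to ``degree-$m$ relations are consequences of lower-degree relations,'' and quadraticity follows by induction on the degree of relations. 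Your reverse implication replaces that induction by a comparison with the quadratic cover $A'=T(A[1])/(P)$ and an induction on the weight $m$, using only the $H_1$-criterion (applied to $A'$, which is generated in degree one by construction) together with the vanishing hypotheses for $A$; this avoids having to identify $K_2$ at all. A pleasant byproduct is that your proof never actually needs the flatness hypothesis: the one place you invoke it is automatic, since the composite $V^{\otimes a}\otimes_R I_{b+2}\ra V^{\otimes a}\otimes_R V^{\otimes(b+2)}\ra A[a]\otimes_R A[b+2]$ vanishes simply because its second tensor factor $I_{b+2}\ra A[b+2]$ is the zero map (only right-exactness of $\otimes_R$ is used, and likewise tensor products of isomorphisms suffice in your inductive step for $A'\ra A$). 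So your route proves the proposition without flatness, while the paper's single diagram chase buys the sharper degreewise criterion that, for each fixed $m>2$, $H_2\rB(A)[m]=0$ exactly when the degree-$m$ relations are generated by relations of lower degree.
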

\begin{proof}
By the previous proposition, it suffices to show that if $A$ is
generated over $R$ by $A[1]$, then $A$ is quadratic if and only if
$H_2\rB(A)[m]=0$ for all 
$m>2$.  

Let $f\colon
T(A[1])\ra A$ denote the homomorphism of  rings which is the
identity map on $A[1]$; it is surjective and grading 
preserving.  
Consider the resulting exact sequence of chain complexes
$$0\ra K[m]\xra{\gamma} \rB(TA[1])[m] \xra{g} \rB(A)[m]\ra 0,$$
where $g$ is induced by the algebra map $f$.  
Examining the exact sequence of complexes in degrees $1$ and $2$
gives the commutative  diagram
$$\xymatrix@C=10pt@R=20pt{
& {0} \ar[d]
& {0} \ar[d]
\\
{\bigoplus_{\substack{p+q=m \\ p,q>0}} \left(K_1[p]\otimes_R
    A[1]^{\otimes q}\right)
  \oplus \left(A[1]^{\otimes
    p}\otimes_R K_1[q]\right)} \ar@{->>}[r]^-{\beta_m}
\ar[dr]^-{\widetilde{\beta}=\bigoplus(\gamma \otimes \id, 
  \id\otimes \gamma)}
& {K_2[m]} \ar[d] \ar[r]^-{\delta_m}
& {K_1[m]} \ar[d]^{\gamma}
\\
&{\bigoplus_{\substack{p+q=m \\ p,q>0}} A[1]^{\otimes p}\otimes_R A[1]^{\otimes q}}
\ar[d]^{g_2=\bigoplus f\otimes f}
 \ar[r]
& {A[1]^{\otimes m}} \ar[d]^{f}
\\
& {\bigoplus_{\substack{p+q=m \\ p,q>0}} A[p]\otimes_R A[q]} \ar[d] \ar[r]
& {A[m]} \ar[d]
\\
& {0}
& {0}
}$$
in which the columns are exact.  Since $g_2\circ
\widetilde{\beta}=0$, there is a unique lift $\beta_m$ as shown in the diagram,
and $\beta_m$ must be surjective by the flatness hypothesis on $A$.

We have already observed that the complex $\rB(TA[1])[m]$ is acyclic
for all $m>1$ \eqref{exam:koszul-tensor-algebra}.  Thus
$H_2\rB(A)[m]\approx H_1K[m]$ for $m>1$, and 
since $K_0[m]=0$, we obtain an exact sequence 
$$K_2[m] \xra{\delta_m} K_1[m] \ra H_2\rB(A)[m]\ra 0.$$

Putting this all together, we find that (for $m>2$), $H_2\rB(A)[m]=0$ 
if and only if 
$\delta_m$ is surjective, if and only if $\delta_m\beta_m$ is
surjective.   But $\delta_m\beta_m$ being surjective means exactly
that $K_1[m]$ (the module of relations of $A$ of degree $m$) is
generated by relations of lower degree.
\end{proof}

\section{Linearization of functors}
\label{sec:linearization}

In this section, we discuss a certain ``linearization'' operation on
functors between abelian categories.  The linearization construction
gives us a formal approach to the algebra $\Delta$ of
\S\ref{sec:rings-of-power-ops}, and will be used 
later to obtain a bar-resolution of $\Delta$ from partition
complexes. 

\subsection{The linearization construction}
\label{subsec:linearization-def}

Let $\mathcal{A}$ be an additive category, and let $\mathcal{B}$ be an
abelian category.  Let $F\colon \mathcal{A}\ra
\mathcal{B}$ be a (not necessarily additive) functor.  We say $F$ is
\dfn{reduced} if $F0\approx 0$.

For such a reduced functor $F$, we define $\lin_F\colon \mathcal{A}\ra
\mathcal{B}$ together with a natural transformation $\epsilon \colon
F\ra \lin_F$  by setting $\lin_F(X)$ to be the coequalizer of
\[\xymatrix{
{F(X\oplus X)} \ar@<1ex>[rr]^-{F(p_1+p_2)} \ar@<-1ex>[rr]_-{F(p_1)+F(p_2)}
&& {F(X)}
}\]
where $p_i\colon X\oplus X\ra X$ for $i=1,2$ are the two projections.
\begin{prop}
The functor $\lin_F$ is additive; any natural map $F\ra G$ to an
additive functor $G$ factors uniquely through $\epsilon\colon F\ra \lin_F$.
\end{prop}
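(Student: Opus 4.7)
The plan is to verify both claims by a short diagram chase against the defining coequalizer relation, using nothing beyond the hypothesis that $\mathcal{B}$ is abelian.  First, I would confirm that $\lin_F$ is functorial in $X$: for any $f\colon Y\ra X$, the map $F(f\oplus f)\colon F(Y\oplus Y)\ra F(X\oplus X)$ intertwines the two parallel arrows being coequalized (because $f\circ p_i = p_i\circ (f\oplus f)$), so the composite $\epsilon_X\circ F(f)$ factors uniquely through $\epsilon_Y$ to define $\lin_F(f)\colon \lin_F(Y)\ra \lin_F(X)$.  Preservation of identities and composition then follow immediately from the uniqueness clause in the universal property of $\lin_F(Y)$.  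Reducedness of $\lin_F$ is inherited from $F$, since the coequalizer of the zero maps is zero.

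For additivity, I need to show that $\lin_F(f+g)=\lin_F(f)+\lin_F(g)$ for any parallel pair $f,g\colon Y\ra X$.  Since $\epsilon_Y$ is an epimorphism in the abelian category $\mathcal{B}$, it suffices to check the equality after precomposing with $\epsilon_Y$.  The trick is to factor $f+g$ as $(p_1+p_2)\circ h$, where $h=(f,g)\colon Y\ra X\oplus X$.  One then computes
\[
\epsilon_X\circ F(f+g)=\epsilon_X\circ F(p_1+p_2)\circ F(h)=\epsilon_X\circ(F(p_1)+F(p_2))\circ F(h)=\epsilon_X\circ(F(f)+F(g)),
\]
where the middle equality is exactly the defining coequalizer relation at $X$, applied after the morphism $F(h)$.

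For the universal property, suppose $G\colon \mathcal{A}\ra \mathcal{B}$ is additive and $\phi\colon F\ra G$ is a natural transformation.  Additivity of $G$ gives $G(p_1+p_2)=G(p_1)+G(p_2)$, and combining this with naturality of $\phi$ shows that $\phi_X\colon F(X)\ra G(X)$ coequalizes the defining pair.  The universal property of $\lin_F(X)$ thus yields a unique $\bar\phi_X\colon \lin_F(X)\ra G(X)$ satisfying $\bar\phi_X\circ \epsilon_X=\phi_X$, and naturality of $\bar\phi$ in $X$ is forced by applying the same uniqueness clause to a composable arrow in $\mathcal{A}$.

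The argument is essentially formal, and I do not anticipate any substantive obstacle.  The only step that requires a moment of care is the additivity clause: one cannot deduce $F(f+g)=F(f)+F(g)$ directly, since $F$ is not assumed additive, and must instead route the identity through the auxiliary arrow $h=(f,g)$ and the coequalizer relation at $X$.  This is precisely the point at which the defining structure of $\lin_F$ is used.
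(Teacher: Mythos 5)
Your proposal is correct and follows essentially the same route as the paper: the additivity of $\lin_F$ is established by the identical chain of equalities $\epsilon\, F(f+g)=\epsilon\, F(p_1+p_2)F((f,g))=\epsilon\,(F(p_1)+F(p_2))F((f,g))=\epsilon\,(F(f)+F(g))$, followed by cancelling the epimorphism $\epsilon$. For the universal property the paper simply invokes $\lin_G\approx G$ for additive $G$, whereas you verify directly that $\phi_X$ coequalizes the defining pair; these are the same observation ($G(p_1+p_2)=G(p_1)+G(p_2)$) packaged slightly differently, so there is no substantive difference.
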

\begin{proof}
For the first part, note that if $f,g\colon Y\ra X$ are two maps in
$\mathcal{A}$, then 
\[
  \epsilon F(f+g) = \epsilon F(p_1+p_2) F((f,g)) = \epsilon (F(p_1) +
  F(p_2))F((f,g)) = \epsilon(F(f)+F(g)),
\]
as maps $F(Y)\ra \lin_F(X)$, from which it follows that
$\lin_F(f+g)=\lin_F(f)+\lin_F(g)$.   The second part follows from the observation
that $\lin_G\approx G$ when $G$ is additive. 
\end{proof}

Let $\eet F(X) \defeq \ker\left[ F(X\oplus X)
  \xra{(F(p_1),F(p_2))} F(X)\oplus F(X)\right]$, and write
$\beta_F\colon \eet F(X)\ra F(X\oplus X)$ for the inclusion.  Then there is an
exact sequence
\[
\eet F(X) \xra{\gamma_F} F(X) \xra{\epsilon} \lin_F(X) \ra 0.
\]
where $\gamma_F= F(p_1+p_2)\circ \beta_F$.

\subsection{A ``chain rule''}

Given functors $F\colon \mathcal{A}\ra \mathcal{B}$ and $G\colon
\mathcal{B}\ra \mathcal{C}$, where $\mathcal{A}$ is additive and
$\mathcal{B}$ and $\mathcal{C}$ are abelian, there is a
unique 
natural transformation
$c\colon \lin_{F\circ G} \ra  \lin_F\circ \lin_G$ such that $c \epsilon_{F\circ
  G} = \epsilon_F\circ \epsilon_G\colon F\circ G\ra \lin_F\circ \lin_G$,
since $\lin_F\circ \lin_G$ is additive.  Our chain rule says that the
transformation $c$ is an isomorphism whenever things split.

\begin{prop}\label{prop:linearization-composite}
Let $X$ be an object of $\mathcal{A}$.  Suppose that there are direct
sum decompositions 
\[
A\oplus B \xra[\sim]{(i_A,i_B)} G(X), \qquad B\oplus C
\xra[\sim]{(i_{B}', i_C)} \eet G(X),
\]
such that $\epsilon_G i_A\colon A\ra \lin_G(X)$ is an isomorphism,
$\gamma_G i_B' =i_B$, and $\gamma_G i_C=0=\epsilon_G 
i_B$. 
Then the 
natural map $c\colon \lin_{F\circ G}(X)\ra \lin_F(\lin_G(X))$ is an isomorphism.
\end{prop}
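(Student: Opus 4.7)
My plan is to construct an explicit two-sided inverse $\tilde c$ to $c$. Via the isomorphism $\epsilon_G i_A\colon A\xra{\sim}\lin_G(X)$, I first identify $\lin_F(\lin_G(X))$ with $\lin_F(A)$; under this identification, the natural transformation $F(\epsilon_G)\colon F(G(X))\to F(\lin_G(X))$ becomes $F(\pi_A)$, where $\pi_A\colon G(X)=A\oplus B\to A$ is the projection (using $\epsilon_G i_B=0$). So $c$ is characterized by $c\,\epsilon_{F\circ G}=\epsilon_F\,F(\pi_A)$, and I will define $\tilde c\colon \lin_F(A)\to\lin_{F\circ G}(X)$ as the (unique, if it exists) map satisfying $\tilde c\,\epsilon_F=\epsilon_{F\circ G}\,F(i_A)$.

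To show $\tilde c$ is well-defined, I need $\epsilon_{F\circ G}\,F(i_A)$ to annihilate the image of $F(q_1+q_2)-F(q_1)-F(q_2)$ on $F(A\oplus A)$. I would use the morphism $\iota\colon A\oplus A\to G(X\oplus X)$ in $\mathcal{B}$ defined by $\iota = G(j_1)\,i_A\,q_1 + G(j_2)\,i_A\,q_2$, where $j_1,j_2\colon X\to X\oplus X$ are the inclusions and $q_1,q_2$ are the projections out of $A\oplus A$. A short computation using $p_k j_\ell = \delta_{k\ell}\,\id$ gives $G(p_k)\,\iota = i_A\,q_k$ and $G(p_1+p_2)\,\iota = i_A\,(q_1+q_2)$; applying $F$ and post-composing with $\epsilon_{F\circ G}$ transforms the linearization relation on $F(A\oplus A)$ into the linearization relation on $F(G(X\oplus X))$ precomposed with $F(\iota)$, hence kills it. The identity $c\,\tilde c = \id$ then follows immediately from $F(\pi_A i_A) = \id_{F(A)}$.

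The main obstacle is the reverse identity $\tilde c\,c = \id$, which is equivalent to $\epsilon_{F\circ G}\,F(i_A\pi_A) = \epsilon_{F\circ G}$. Here both splittings enter. First, I would apply the linearization relation to $F(\beta_G)\colon F(\eet G(X))\to F(G(X\oplus X))$; the identities $G(p_k)\,\beta_G = 0$ and $G(p_1+p_2)\,\beta_G = \gamma_G$ yield $\epsilon_{F\circ G}\,F(\gamma_G) = 0$. The hypotheses $\gamma_G\,i_B' = i_B$ and $\gamma_G\,i_C = 0$ say precisely that $\gamma_G$ factors as $i_B$ composed with the projection $\eet G(X) = B\oplus C\to B$, and since that projection is split surjective I conclude $\epsilon_{F\circ G}\,F(i_B) = 0$.

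To finish, I would introduce the morphism $\rho\colon G(X)\to G(X\oplus X)$ in $\mathcal{B}$ given by $\rho = G(j_1)\,i_A\pi_A + G(j_2)\,i_B\pi_B$, where $\pi_B\colon G(X)\to B$ is the complementary projection. Using additivity of the $G(p_k)$ in $\mathcal{B}$ together with $p_k j_\ell = \delta_{k\ell}\id$ and $p_1+p_2 = \nabla$, one checks $G(p_1)\rho = i_A\pi_A$, $G(p_2)\rho = i_B\pi_B$, and $G(p_1+p_2)\rho = i_A\pi_A + i_B\pi_B = \id_{G(X)}$. Applying $F$ to $\rho$ and invoking the linearization relation for $F\circ G$ then gives
\[
\epsilon_{F\circ G}\bigl[\id_{F(G(X))} - F(i_A\pi_A) - F(i_B\pi_B)\bigr] = 0,
\]
and combining this with the vanishing of $\epsilon_{F\circ G}\,F(i_B)$ yields $\epsilon_{F\circ G} = \epsilon_{F\circ G}\,F(i_A\pi_A)$, as required. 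The conceptual point is that the idempotent decomposition $\id_{G(X)} = i_A\pi_A + i_B\pi_B$ in $\mathcal{B}$ is lifted through $\rho$ to $G(X\oplus X)$, where the non-additivity of $F$ is precisely what $\lin_{F\circ G}$ is designed to kill.
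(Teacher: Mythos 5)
Your argument is correct, and it is organized differently from the paper's proof even though the computational ingredients largely coincide. The paper first shows $c$ is an epimorphism (via $F(A)\ra F(\lin_G(X))$), then splits $F(G(X))\approx F(A)\oplus F(B)\oplus D$ with $D=\Ker[F(A\oplus B)\ra F(A)\times F(B)]$, identifies the kernel of $F(G(X))\ra \lin_F(\lin_G(X))$ with the image of $((Fi_A)\gamma_F,Fi_B,i_D)$, and constructs section maps $g_1,g_2,g_3$ into $F(G(X\oplus X))$ to show that kernel is hit by the linearization difference, forcing injectivity. You instead build an explicit two-sided inverse $\tilde c$ out of the universal property of the cokernel, verifying $c\tilde c=\id$ and $\tilde c c=\id$ from the relations $\epsilon_{F\circ G}F(i_B)=0$ and $\epsilon_{F\circ G}=\epsilon_{F\circ G}F(i_A\pi_A)$. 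Your morphisms $\iota$, $F(\beta_G)F(i_B')$, and $\rho$ are essentially the paper's $g_1$, $g_2$, $g_3$ in disguise (e.g.\ $\iota$ is exactly the map $(G(i_1)i_A,G(i_2)i_A)\colon A\oplus A\ra G(X\oplus X)$ underlying $g_1$), so the same identities $G(p_k)j_\ell$-style computations do the work in both proofs; what your packaging buys is that you never need the cross-effect summand $D$, the explicit identification of the kernel, or the separate epimorphism step, since everything is expressed as relations holding after composing with the epimorphism $\epsilon_{F\circ G}$. Two small remarks: your derivation of $\epsilon_{F\circ G}F(i_B)=0$ can be shortened to $\epsilon_{F\circ G}F(i_B)=\epsilon_{F\circ G}F(\gamma_G)F(i_B')=0$, using the hypothesis $i_B=\gamma_G i_B'$ directly rather than factoring $\gamma_G$ through the projection; and note that your computations of $G(p_k)\iota$ and $G(p_k)\rho$ quietly use that $G$ is reduced (so that $G(p_kj_\ell)=0$ for $k\neq\ell$), which is indeed part of the standing assumptions for the linearization construction and is used in the same way in the paper's proof.
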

\begin{cor}\label{cor:linearization-projective}
Let $F,G\colon \mathcal{A}\ra \mathcal{A}$ be functors on an abelian
category $\mathcal{A}$, and
suppose that $G(X)$ and $\lin_G(X)$ are projective whenever $X$ is a
projective object.  Then $c\colon \lin_{F\circ G}(X) \ra \lin_F(\lin_G(X))$ is
an isomorphism for all projective $X$.
\end{cor}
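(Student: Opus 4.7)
The plan is to deduce this from Proposition \ref{prop:linearization-composite} by producing the required decompositions of $G(X)$ and $\eet G(X)$ from the projectivity hypotheses.

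Since $\lin_G(X)$ is projective by assumption, I would first split the exact sequence
\[
\eet G(X) \xra{\gamma_G} G(X) \xra{\epsilon_G} \lin_G(X) \ra 0
\]
from \S\ref{subsec:linearization-def} at the right. Choose a section $i_A\colon \lin_G(X)\ra G(X)$ of $\epsilon_G$ and set $A=\lin_G(X)$, $B=\ker(\epsilon_G)$, with $i_B\colon B\hookrightarrow G(X)$ the inclusion. Then $(i_A,i_B)$ gives $A\oplus B\xra{\sim} G(X)$, with $\epsilon_G i_A$ an isomorphism and $\epsilon_G i_B=0$. Since $G(X)$ is projective and $B$ is a direct summand of it, $B$ is projective as well.

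Next, by exactness of the sequence above, $\gamma_G$ factors as $\eet G(X)\twoheadrightarrow B\hookrightarrow G(X)$, the second map being $i_B$. Projectivity of $B$ splits the short exact sequence $0\ra C\ra \eet G(X)\twoheadrightarrow B\ra 0$, where $C\defeq \ker(\gamma_G)$; choose a section $i_B'\colon B\ra \eet G(X)$ and let $i_C\colon C\hookrightarrow \eet G(X)$ be the inclusion. Then $(i_B',i_C)$ gives $B\oplus C\xra{\sim} \eet G(X)$, with $\gamma_G i_B'=i_B$ and $\gamma_G i_C=0$.

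This is exactly the data required by Proposition \ref{prop:linearization-composite}, so its conclusion applies and yields the desired isomorphism $c\colon \lin_{F\circ G}(X)\xra{\sim} \lin_F(\lin_G(X))$. There is no real obstacle here; the only point needing attention is the identification of the image of $\gamma_G$ with $\ker(\epsilon_G)$, which is guaranteed by the exactness recorded in the definition of the linearization.
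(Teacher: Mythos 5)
Your deduction is correct and is exactly the intended route: the paper states the corollary as an immediate consequence of Proposition \eqref{prop:linearization-composite}, with the splittings of $\eet G(X)\xra{\gamma_G} G(X)\xra{\epsilon_G}\lin_G(X)\ra 0$ supplied by projectivity of $\lin_G(X)$ and of $B=\ker\epsilon_G$ (a summand of the projective $G(X)$), just as you do. All the hypotheses of the proposition ($\epsilon_G i_A$ iso, $\gamma_G i_B'=i_B$, $\gamma_G i_C=0=\epsilon_G i_B$) are verified correctly, so nothing is missing.
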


The proof of \eqref{prop:linearization-composite} is given at the end
of this section.

\subsection{The ring $\Delta$ is the linearization of the monad
  $\talgapprox$}

\begin{prop}\label{prop:linearization-of-algapprox}
Let $m=p^k$, and let $M$ be a free $E_*$-module concentrated in even
degree.  Then the natural map
$\lin_{\algapprox\langle m\rangle}(M) 
\ra \Delta[k]\otimes_{E_0} M$ is an isomorphism.
If $m\neq p^k$, $\lin_{\algapprox \langle m\rangle}(M)=0$.
These maps fit together to give an isomorphism
$\lin_{\talgapprox}(M)\approx \Delta\otimes_{E_0}M$.
\end{prop}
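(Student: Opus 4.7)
The plan is to use the graded exponential monad structure of $\algapprox$ to compute $\algapprox_m(M\oplus M)$ explicitly and identify the cokernel defining $\lin_{\algapprox_m}(M)$ with the weight-$m$ indecomposables of the augmented $E_*$-algebra $\algapprox(M)$. The first step is to invoke the isomorphism $\zeta$ of \S\ref{subsec:graded-exponential}, which yields a canonical decomposition
\[
\algapprox_m(M\oplus M) \;\approx\; \bigoplus_{i+j=m} \algapprox_i(M)\otimes_{E_*}\algapprox_j(M).
\]
Naturality of the exponential structure against the inclusions of direct summands then implies that the projection-induced maps $\algapprox_m(p_1),\algapprox_m(p_2)\colon \algapprox_m(M\oplus M)\to \algapprox_m(M)$ vanish on every summand except the one where the appropriate index equals $m$, where each restricts to the canonical isomorphism.

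The second step is to analyze $\algapprox_m(p_1+p_2) = \algapprox_m(\nabla)$: its precomposition with $\zeta_{i,j}$ is, by the very definition of an exponential monad (\S\ref{subsec:exponential-monads}), the weight-graded multiplication $\mu_{i,j}$ of the commutative $E_*$-algebra $\algapprox(M)$. On the $(m,0)$ and $(0,m)$ summands this multiplication is the canonical identification, so those contributions cancel in $\algapprox_m(p_1+p_2) - \algapprox_m(p_1) - \algapprox_m(p_2)$. Consequently the coequalizer reduces to
\[
\lin_{\algapprox_m}(M) \;\approx\; \Cok\!\left[\bigoplus_{\substack{i+j=m \\ i,j>0}} \algapprox_i(M)\otimes_{E_*}\algapprox_j(M) \xrightarrow{\mu_{i,j}} \algapprox_m(M)\right],
\]
that is, the weight-$m$ indecomposables of the augmented algebra $\algapprox(M)$.

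The third step is to match this cokernel with $\Delta[k]\otimes_{E_0} M$. For $M=E_*$, property (3) of \S\ref{subsec:algebraic-functor} gives $\algapprox_j(E_*)\approx\cE{*}B\Sigma_j$, and $\mu_{i,j}$ is induced by the group inclusion $\Sigma_i\times\Sigma_j\subset\Sigma_{i+j}$; when $m=p^k$ the cokernel then matches the definition of $\Delta[k]$ from \S\ref{sec:rings-of-power-ops} verbatim. For a general free $E_*$-module $M$ concentrated in even degree, I would write $M$ as a direct sum of shifted copies of $E_*$ and combine the additivity of $\lin_{\algapprox_m}$, the Kan extension property (5) of \S\ref{subsec:algebraic-functor}, and the even-periodicity of $E$ (\eqref{prop:periodicity-of-power-op-rings}) to extend from the case $M=E_*$.

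The main obstacle is the vanishing when $m$ is not a power of $p$: one must show the product map above is surjective, which is genuine Morava $E$-theoretic input rather than formal algebra. I would derive it by a Sylow-and-transfer argument: when $m$ is not a prime power, the Sylow $p$-subgroup of $\Sigma_m$ is contained in a proper Young subgroup $\Sigma_a\times\Sigma_{m-a}$, so transfer along this inclusion exhibits $\cE{*}B\Sigma_m$ as a retract of $\cE{*}B(\Sigma_a\times\Sigma_{m-a})$ (using that $E$ is $K(n)$-local at $p$), forcing the multiplication map to be surjective in weight $m$.
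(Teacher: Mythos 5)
Your proposal is correct and follows essentially the same route as the paper's proof: both use the exponential decomposition of $\algapprox\langle m\rangle(M\oplus M)$ to identify $\lin_{\algapprox\langle m\rangle}(M)$ with the cokernel of the lower-weight multiplications $\bigoplus_{0<i<m}\cE{*}B(\Sigma_i\times\Sigma_{m-i})\ra \cE{*}B\Sigma_m$ (the weight-$m$ indecomposables), reduce to $M=E_*$ by additivity and colimit/Kan-extension arguments, and then read off $\Delta[k]$ from its definition. The differences are cosmetic: the paper identifies the key map by computing the effect of $\Pfree_m(p_1+p_2)$ topologically and simply asserts surjectivity when $m$ is not a power of $p$, whereas you make the identification through the monad axioms and supply the standard Sylow/transfer retraction (the same argument as in \eqref{cor:transfers-commute}, valid since the index $\binom{m}{a}$ can be chosen prime to $p$ exactly when $m$ is not a power of $p$) to justify that surjectivity.
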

\begin{proof}
Since $\algapprox$ commutes with filtered
colimits (\S\ref{subsec:algebraic-functor}(6)), so does
$\lin_{\algapprox\langle m\rangle}$, and so it is 
enough to consider finite free modules $M=(E_*)^k$. 
Since both $\lin_{\algapprox\langle m\rangle}$ and
$\Delta\otimes_{E_*}({-})$ are additive, it is enough to consider
the module $M=E_*$.   Now we compute that for $m\geq1$, 
\begin{align*}
\eet \algapprox\langle m\rangle (E_*) &= \Ker\left[ 
\algapprox \langle m\rangle(E_*\oplus E_*) \xra{(\algapprox\langle
  m\rangle p_1, \algapprox\langle m\rangle p_2)}  \algapprox \langle
m\rangle (E_*)\oplus \algapprox\langle m\rangle (E_*)\right]
\\
&\approx \bigoplus_{0<i<m} E_*B(\Sigma_i\times \Sigma_{m-i}).
\end{align*}
The map $\algapprox \langle m\rangle (p_1+p_2)\colon \algapprox \langle
m\rangle(E_*\oplus E_*)\ra  \algapprox \langle m\rangle(E_*)$ computes
the effect of $\Pfree_m(p_1+p_2)\colon \Pfree_m(E\vee E)\ra
\Pfree_m(E)$ on homotopy, and we see that $\lin_{\algapprox \langle m
\rangle}(E_*)$ is the cokernel of $\bigoplus_{0<i<m}
E_*B(\Sigma_i\times \Sigma_{m-i})\ra E_*B\Sigma_m$ induced by
transfers.  This map is surjective if $m$ is not a power of $p$; if
$m=p^k$ then in degree $0$ the cokernel is precisely $\Delta[k]$,  and
in general degree the cokernel is $\Delta[k]\otimes_{E_0} E_*$.  
\end{proof}

In particular, it follows that $\lin_{\talgapprox}(M)$ is a free
$E_*$-module whenever $M$ is, since $\Delta$ is a free $E_0$-module
\eqref{cor:gamma-delta-iso-free}.  

\begin{prop}\label{prop:linearization-of-t-bar-is-delta-bar}
There is an isomorphism
\[
\lin_{\B(\talgapprox)}(M) \approx
\B(\Delta)\otimes_{E_0} M
\]
of simplicial $E_*$-modules, which is compatible with grading, so that
$\lin_{\mathcal{B}(\talgapprox)\langle m\rangle }(M) \approx
\mathcal{B}(\Delta)[k]\otimes_{E_0}M$, where $m=p^k$.
\end{prop}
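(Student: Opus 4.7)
Since $\B_q(\talgapprox)=\talgapprox^{\circ(q+2)}$, it suffices to show, for each $n\geq 2$, an isomorphism $\lin_{\talgapprox^{\circ n}}(M)\approx \Delta^{\otimes_{E_0} n}\otimes_{E_0} M$ that is natural in the obvious way, so that the face and degeneracy maps of $\B(\talgapprox)$ linearize to those of $\B(\Delta)$. The plan is to iterate the chain rule \eqref{cor:linearization-projective}, with base case \eqref{prop:linearization-of-algapprox}. I would first reduce to the case that $M$ is a finite free $E_*$-module concentrated in even degree: both sides commute with filtered colimits in $M$ (the left by \S\ref{subsec:algebraic-functor}(6), the right because $\Delta$ is $E_0$-free by \eqref{cor:gamma-delta-iso-free}), and every even-degree free module is a filtered colimit of finite frees.

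\textbf{Iteration and splittings.} For finite free $M$, property \S\ref{subsec:algebraic-functor}(4) together with the inductive weight formula of \S\ref{subsec:graded-exponential} imply that each weight piece $\talgapprox^{\circ k}\langle m\rangle(M)$ is finite free. To apply \eqref{prop:linearization-composite} with $G=\talgapprox$ I need the splittings it prescribes: the graded exponential decomposition $\algapprox(X\oplus X)\approx \algapprox(X)\otimes_{E_*}\algapprox(X)$ yields $\eet\talgapprox(X)\approx \talgapprox(X)\otimes_{E_*}\talgapprox(X)$ with $\gamma_\talgapprox$ the commutative algebra multiplication, and the surjection $\talgapprox X\twoheadrightarrow \Delta\otimes_{E_0} X$ from \eqref{prop:linearization-of-algapprox} splits because $\Delta\otimes_{E_0} X$ is free. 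The image of $\gamma_\talgapprox$ is then a direct summand of $\talgapprox X$, hence projective, giving the required section from that image back into $\talgapprox X\otimes_{E_*} \talgapprox X$. Inductively taking $F=\talgapprox^{\circ(n-1)}$ at each stage then gives $\lin_{\talgapprox^{\circ n}}(M)\approx \lin_\talgapprox^{\circ n}(M)\approx \Delta^{\otimes_{E_0} n}\otimes_{E_0} M$.

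\textbf{Simplicial structure, grading, and main obstacle.} The face maps of $\B(\talgapprox)$ are built from $\mu\colon \talgapprox\circ\talgapprox\ra\talgapprox$ and the unit; by naturality of linearization these descend to maps $\Delta\otimes_{E_0}\Delta\ra\Delta$ and $E_0\ra\Delta$, which must be the ring multiplication and unit of $\Delta$, since the ring structure $\Delta=\End(V^0)$ of \S\ref{sec:rings-of-power-ops} is by construction the one induced from the monad structure acting on indecomposables. This recovers the face maps of $\B(\Delta)$, and the degeneracies work similarly. For the grading, \eqref{prop:linearization-of-algapprox} shows $\lin_{\algapprox_m}=0$ unless $m=p^k$, so in the weight decomposition of $\talgapprox^{\circ(q+2)}$ from \S\ref{subsec:graded-exponential} only the pure-weight summands $\algapprox_{p^{k_0}}\circ\cdots\circ\algapprox_{p^{k_{q+1}}}$ contribute under linearization, each yielding $\Delta[k_0]\otimes_{E_0}\cdots\otimes_{E_0}\Delta[k_{q+1}]\otimes_{E_0} M$ and summing, when $m=p^k$, to $\B_q(\Delta)[k]\otimes_{E_0} M$. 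The main obstacle is the iteration: after one application of the chain rule the input becomes $\Delta\otimes_{E_0} M$, which is free but no longer finitely generated, and $\talgapprox$ applied to a non-finitely-generated free module is only $E_0$-flat in general. The remedy is to work weight-by-weight throughout, since each $\talgapprox^{\circ n}\langle m\rangle$ is assembled from $\algapprox_{m_j}$ applied to finite-free inputs, so the splittings can be built piecewise using only \S\ref{subsec:algebraic-functor}(4); the global chain-rule isomorphism then follows by assembling these weight-by-weight identifications.
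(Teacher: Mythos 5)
Your overall strategy is the same as the paper's: base case \eqref{prop:linearization-of-algapprox}, the chain rule of \S\ref{sec:linearization} to identify $\lin_{\talgapprox^{\circ q}}$ with $\lin_{\talgapprox}^{\circ q}$, and then matching the simplicial and weight structure with $\B(\Delta)$.  The one place you diverge is the treatment of the iteration, and there your write-up rests on a false premise and ends vaguely.  The claim that ``$\talgapprox$ applied to a non-finitely-generated free module is only $E_0$-flat in general'' is incorrect: for any free module $M=\bigoplus_{b\in B}E_*$, the graded exponential structure (\S\ref{subsec:graded-exponential}) together with preservation of filtered colimits (\S\ref{subsec:algebraic-functor}(6)) gives a decomposition of $\algapprox_m(M)$ as a direct sum, over finitely supported functions $b\mapsto m_b$ with $\sum_b m_b=m$, of tensor products $\bigotimes_b\algapprox_{m_b}(E_*)$, each finite free by \S\ref{subsec:algebraic-functor}(3)--(4); hence $\talgapprox(M)$ is free for \emph{every} free $M$.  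Combined with the freeness of $\lin_{\talgapprox}(M)\approx\Delta\otimes_{E_0}M$ (from \eqref{prop:linearization-of-algapprox} and \eqref{cor:gamma-delta-iso-free}), this is exactly the hypothesis of \eqref{cor:linearization-projective}, which therefore applies at every stage of the iteration, including at the input $\Delta\otimes_{E_0}M$.  This is precisely how the paper argues, and it dissolves your ``main obstacle''; the preliminary reduction to finite free $M$ is then also unnecessary.

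By contrast, your stated remedy --- build the splittings ``piecewise'' and assemble ``weight-by-weight identifications'' --- is not yet an argument: the hypotheses of \eqref{prop:linearization-composite} concern the whole objects $G(X)$, $\eet G(X)$ and $\lin_G(X)$ at the given input, not their weight summands, and the weight decomposition of $\talgapprox^{\circ n}$ is not a direct sum of composites $\algapprox_{m_1}\circ\cdots\circ\algapprox_{m_n}$ (it contains genuinely mixed tensor-product summands), so there is nothing to assemble until you either (i) prove the freeness statement above, making \eqref{cor:linearization-projective} directly applicable as in the paper, or (ii) reorganize the induction so the chain rule is only invoked at finite free inputs, e.g.\ by peeling off the \emph{outer} factor, taking $G=\talgapprox^{\circ(n-1)}$ and $F=\talgapprox$: then $G(M)$ and $\lin_G(M)\approx\Delta^{\otimes_{E_0}(n-1)}\otimes_{E_0}M$ are free for finite free even $M$, and the infinitely generated free module enters only through \eqref{prop:linearization-of-algapprox}, whose statement and proof already cover arbitrary free even modules via filtered colimits.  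With either repair your argument coincides with the paper's; your identification of the faces and degeneracies with those of $\B(\Delta)$, and the pure-weight analysis of the grading, agree with what the paper records (the latter separately as \eqref{prop:linearization-of-t-bar-pure-part}).
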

\begin{proof}
If $M$ is a free module, then $\talgapprox(M)$ and
$\lin_{\talgapprox(M)}$ are free.  Thus, the chain rule
\eqref{cor:linearization-projective} applies to show that
$\lin_{\talgapprox^{\circ q}}(M) 
\approx \lin_{\talgapprox}^{\circ q}(M)$ is an isomorphism for
$q\geq0$.  The result follows using \eqref{prop:linearization-of-algapprox}.
\end{proof}

We can make a more refined statement about this isomorphism: all
contributions to $\lin_{\B(\talgapprox)}$ come from the pure weight
part (\S\ref{subsec:graded-exponential}) of 
$\talgapprox^{\circ q}\langle m\rangle$.  
\begin{prop}
\label{prop:linearization-of-t-bar-pure-part}
For all $m=m_1\cdots m_q$ with $m_i=p^{k_i}$ and $k_i\geq0$, the diagram
\[\xymatrix{
{\lin_{\algapprox \langle m_1\rangle \circ\cdots \algapprox\langle
    m_q\rangle} M} \ar[r] \ar[d]
& {\Delta[k_1]\otimes_{E_0}\cdots\otimes_{E_0}\Delta[k_q]
  \otimes_{E_0}M} \ar[d]
\\
{\lin_{\talgapprox^{\circ q}}(M)} \ar[r]
& {\Delta\otimes_{E_0}\cdots \otimes_{E_0}\Delta\otimes_{E_0}M}
}\]
commutes.
\end{prop}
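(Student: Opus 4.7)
The plan is to identify the top horizontal map as the composite of an iterated chain rule and repeated application of Proposition~\ref{prop:linearization-of-algapprox}, identify the bottom map as the analogous composite for $\talgapprox$ appearing in the proof of Proposition~\ref{prop:linearization-of-t-bar-is-delta-bar}, and then derive commutativity of the square from naturality of these two constructions with respect to the inclusion of the pure-weight summand $\algapprox\langle m_1\rangle \circ \cdots \circ \algapprox\langle m_q\rangle \hookrightarrow \talgapprox^{\circ q}$.

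Assuming $M$ is free over $E_*$ (which suffices, as this is the case that feeds into the bar construction), property~(4) of \S\ref{subsec:algebraic-functor} ensures each $\algapprox\langle m_i\rangle$ preserves free $E_*$-modules, and Proposition~\ref{prop:linearization-of-algapprox} together with \eqref{cor:gamma-delta-iso-free} ensures the same for each $\lin_{\algapprox\langle m_i\rangle}$. Hence Corollary~\ref{cor:linearization-projective} applies iteratively to yield the chain-rule isomorphism
\[
\lin_{\algapprox\langle m_1\rangle \circ \cdots \circ \algapprox\langle m_q\rangle}(M) \xra[\sim]{c} \lin_{\algapprox\langle m_1\rangle}\cdots\lin_{\algapprox\langle m_q\rangle}(M),
\]
and Proposition~\ref{prop:linearization-of-algapprox} identifies the target with $\Delta[k_1]\otimes_{E_0}\cdots\otimes_{E_0}\Delta[k_q]\otimes_{E_0} M$. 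The bottom arrow is built from the same ingredients applied to $\talgapprox$ in place of each $\algapprox\langle m_i\rangle$, exactly as in the proof of Proposition~\ref{prop:linearization-of-t-bar-is-delta-bar}. (When some $m_i$ is not a power of $p$, both sides of the square vanish, so this case is trivial.)

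Commutativity of the square then reduces to naturality. The chain rule map $c$ is characterized by the universal property $c\epsilon_{F\circ G} = \epsilon_F \circ \epsilon_G$, and is therefore natural in $F$ and in $G$ with respect to natural transformations of reduced functors. Applied to the inclusion of direct summands $\algapprox\langle m_i\rangle \hookrightarrow \talgapprox$, this reduces the whole square, level by level, to the single-level statement that the isomorphism $\lin_{\algapprox\langle m\rangle}(M) \cong \Delta[k]\otimes_{E_0} M$ of Proposition~\ref{prop:linearization-of-algapprox} is compatible with the direct-sum decompositions $\talgapprox = \bigoplus_{n\geq 1}\algapprox\langle n\rangle$ and $\Delta = \bigoplus_{k\geq 0}\Delta[k]$. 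The main obstacle is this single-level compatibility: one must verify that the coequalizer defining $\lin_{\talgapprox}(M)$ is the direct sum of the coequalizers defining each $\lin_{\algapprox\langle n\rangle}(M)$. This in turn amounts to the observation that the kernel $\eet\talgapprox(M)$ breaks up by weight, which is immediate from the exponential-monad decomposition $\algapprox\langle n\rangle(M\oplus M) \cong \bigoplus_{i+j=n}\algapprox\langle i\rangle(M)\otimes\algapprox\langle j\rangle(M)$ of \S\ref{subsec:graded-exponential}, and is exactly what is recovered in the explicit calculation of $\eet\algapprox\langle n\rangle(E_*)$ given in the proof of Proposition~\ref{prop:linearization-of-algapprox}.
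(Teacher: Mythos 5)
Your proposal is correct and follows essentially the same route as the paper, whose entire proof is the one-line remark that the statement is ``straightforward from the naturality of the linearization construction.'' Your expansion --- naturality of the chain-rule map $c$ in both functor variables applied to the summand inclusions $\algapprox\langle m_i\rangle \hookrightarrow \talgapprox$, together with the observation that the linearization coequalizer, and hence the identification of \eqref{prop:linearization-of-algapprox}, splits along the weight decomposition --- is exactly the intended content of that remark, so there is nothing to add.
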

\begin{proof}
This is straightforward from the naturality of the linearization
construction. 
\end{proof}

\subsection{Proof of the chain rule}

We give here a tedious elementary proof of
\eqref{prop:linearization-composite}; afterwards, we indicate how a
somewhat 
more conceptual proof may be constructed, using the results of
\cite{johnson-mccarthy-calculus-cotriples}.

\begin{proof}
First we claim that $c\colon \lin_{F\circ G}(X)\ra \lin_F(\lin_G(X))$ is an
epimorphism.   
The commutative diagram
\[\xymatrix{
{F(A)} \ar[r]^{F(i_A)} \ar[dr]^{\sim}_{F(\epsilon_Gi_A)}
& {F(G(X))} \ar[r]^{\epsilon_{F\circ G}} \ar[d]^{F(\epsilon_G)}
& {\lin_{F\circ G}(X)} \ar[d]^{c}
\\
& {F(\lin_G(X))} \ar[r]_{\epsilon_F}
& {\lin_F(\lin_G(X))}
}\]
shows that $c$ factors through an epimorphism $F(A)\ra \lin_F(\lin_G(X))$,
and thus is an epimorphism.

Let $D = \Ker\left[ F(A\oplus B) \xra{(F(p_1), F(p_2))} F(A)\oplus
  F(B) \right]$, and write $\alpha\colon D\ra F(A\oplus B)$ for the
inclusion.  Let 
$i_D\colon D\ra F(G(X))$ be defined by $F((i_A,i_B))\circ \alpha$.
The commutative diagram
\[\xymatrix{
{F(A)\oplus F(B) \oplus D} \ar[rr]^-{(Fi_A, Fi_B, i_D)}_-{\sim}
\ar[d]_{p_1}
&& {F(G(X))} \ar[r]^{\epsilon_{F\circ G}} \ar[d]_{F\epsilon_G}
& {\lin_{F\circ G}(X)} \ar[d]^{c}
\\
{F(A)} \ar[rr]_{F(\epsilon_Gi_A)}^{\sim}
&& {F(\lin_G(X))} \ar[r]_{\epsilon_F}
& {\lin_F(\lin_G(X))}
}\]
shows that we can identify the kernel of the projection $F(G(X))\ra
\lin_F(\lin_G(X))$ with the image of the bottom horizontal map in 
\[\xymatrix{
&& 
{F(G(X\oplus X))} \ar[d]^{F(G(p_1+p_2))-F(G(p_1))-F(G(p_2))}
\\
{\eet F(A)\oplus F(B)\oplus D} 
\ar[rr]_-{((Fi_A)\gamma_F, Fi_B, i_D)} \ar@{.>}[urr]^{(g_1,g_2,g_3)}
&& {F(G(X))}
}\]
We will construct a section $(g_1,g_2,g_3)$ making the above diagram
commute, and thus prove that the projection $F(G(X))\ra
\lin_F(\lin_G(X))$ factors through an isomorphism $c\colon \lin_{F\circ G}(X)
\ra \lin_F(\lin_G(X))$, as desired.

Let $g_1\colon \eet F(A)\ra F(G(X\oplus X))$ be defined by the composite
\[ 
\eet F(A) \xra{\beta_F} F(A\oplus A) \xra{F(i_A\oplus i_A)}  F(G(X)\oplus
G(X)) \xra{F((G(i_1),G(i_2)))} F(G(X\oplus X)). 
\]
It is straightforward to check that 
\begin{align*}
  F(G(p_\alpha))\circ F((G(i_1),G(i_2))) \circ F(i_A\oplus i_A) &= F(i_A)
  \circ F(p_\alpha),\qquad (\alpha=1,2)
\\ 
  F(G(p_1+p_2)) \circ F((G(i_1),G(i_2))) \circ F(i_A\oplus i_A) &=
  F(i_A) \circ F(p_1+p_2).
\end{align*}
Thus
\[
(F(G(p_1+p_2))-F(G(p_1))-F(G(p_2))) \circ g_1 = F(i_A) \circ \gamma_F
\]
as desired.

Let $g_2\colon F(B)\ra F(G(X\oplus X))$ be defined by the composite
\[
F(B) \xra{F(i_B')} F(\eet G(X)) \xra{F\beta_G} F(G(X\oplus X)).
\]
Since
\[
F(G(p_\alpha))\circ F(\beta_G)=0,\quad (\alpha=1,2),\qquad
F(G(p_1+p_2))\circ F(\beta_G) = F(\gamma_G),
\]
we see that 
\[(F(G(p_1+p_2)-F(G(p_1))-F(G(p_2)))\circ g_2 =
F(\gamma_G)\circ F(i_B')=F(i_B),\]
 as desired.

Let $g_3\colon D\ra F(G(X\oplus X))$ be defined by the composite
\[
D\xra{\alpha} F(A\oplus B) \xra{F(i_A\oplus i_B)} F(G(X)\oplus G(X))
\xra{F((G(i_1), G(i_2)))} F(G(X\oplus X)).
\]
It is straightforward to check that
\begin{align*}
  F(G(p_1))\circ F((G(i_1),G(i_2))) \circ F(i_A\oplus i_B) & =
  F(i_A)\circ F(p_1),
\\ 
  F(G(p_2)) \circ F((G(i_1), G(i_2))) \circ F(i_A\oplus i_B) &=
  F(i_B)\circ F(p_2),
\\
  F(G(p_1+p_2))\circ F((G(i_1),G(i_2))) \circ F(i_A\oplus i_B) &= F((i_A,i_B)).
\end{align*}
Thus
\[
(F(G(p_1+p_2))-F(G(p_1))-F(G(p_2)))\circ g_3 = F(i_D),
\]
as desired.
\end{proof}

\subsection{Proof of the chain rule, using Johnson-McCarthy}

We briefly describe how one may produce a proof of the chain-rule using the
work of \cite{johnson-mccarthy-calculus-cotriples}.  In that paper,
the authors describe a ``derived linearization''
construction which, given a functor $F\colon\mathcal{A}\ra \mathcal{B}$
from an additive category to an abelian category such that $F0=0$,
produces a 
functor $D_1F\colon \mathcal{A}\ra \mathrm{Ch}\mathcal{B}$ to the
category of chain complexes in $\mathcal{B}$.  In degrees $1$ and $0$,
the chain complex $D_1F$ has the form
\[
\cdots \ra\perp F\xra{\gamma_F} F,
\]
and thus $\lin_F=H_0D_1F$.  

According to 
\cite{johnson-mccarthy-calculus-cotriples}*{Lemma 5.7}, there is a
quasi-isomorphism $D_1F\circ D_1G\approx D_1(F\circ G)$, where the
left-hand side is the total complex of the bicomplex obtained by
applying $D_1F$ degreewise to $D_1G$.  Thus, to recover
\eqref{prop:linearization-composite} from their result, it is
necessary to show that $H_0(D_1F\circ D_1G)\approx H_0D_1F\circ
H_0D_1G$.  This is where the hypotheses on $G(X)$ come into play.  
The functor $D_1F$ is additive up to quasi-isomorphism (i.e.,
$D_1F(X)\oplus D_1F(Y)\ra D_1F(X\oplus Y)$ is always a
quasi-isomorphism).  Thus, under the hypotheses of
\eqref{prop:linearization-composite}, applying $D_1F$ to the sequence $\perp
G(X)\ra G(X)$ gives a map quasi-isomorphic to
\[
D_1F(B)\oplus D_1F(C) \xra{\begin{pmatrix} 0 & 0 \\ 1 &
    0 \end{pmatrix}} D_1F(A)\oplus D_1F(B),
\]
and thus $H_0(D_1F\circ D_1G(X))\approx H_0D_1F(A)\approx H_0D_1F\circ
H_0D_1G(X)$, as desired.

\section{Posets and the partition complex}
\label{sec:posets-and-partition-complex}

In this section we describe the \emph{partition complex} of a finite
set together with some other related objects.  These will play a
crucial role in the argument.

\subsection{Posets and their nerves}

First we describe some general notation for simplicial sets associated
to a poset.

Let $X$ be a poset.  The simplicial nerve functor gives a fully faithful
embedding of the category of posets into the category of simplicial
sets.  Thus, it is convenient to regard a poset as
merely a certain kind of simplicial set, and so we use the same
notation of a poset and its simplicial nerve; an \emph{element} of a
poset is exactly a \emph{vertex} of the simplicial set.
If $x_0\leq x_1\leq \cdots \leq x_q$ is a
finite increasing sequence of elements of $X$, we write $[x]=[x_0\leq
x_1\leq \cdots \leq x_q]$ for the corresponding $q$-simplex of the
nerve.
Non-degenerate simplices of the nerve correspond to chains in which
each inequality is strict, which we notate as $[x_0<x_1<\cdots<x_q]$ 

We now suppose that the poset $X$ contains upper and lower bounds, which we
denote $\maxel$ and $\minel$ respectively.  In what follows we will
always assume 
$\maxel\neq \minel$.
We introduce the following
notation:
\begin{enumerate}
\item Let $\hat{X}$ denote the maximal subposet of $X$ which does not include
  $\minel$; we also use this notation to denote the nerve of this poset.
\item Let $\check{X}$ denote the maximal subposet of $X$ which does not
  include $\maxel$; we also use this notation to denote the nerve of
  this poset.
\item Let $\ddot{X}= \hat{X}\cap \check{X}$ as posets; the nerve of
  $\ddot{X}$ is also an intersection of nerves.
\item Let $X^\diamond$ denote the sub-simplicial set of $X$ defined by
  $\hat{X}\cup \check{X}$.
\item Let $\overline{X}$ denote the pointed simplicial set defined by
  $X/X^\diamond$. 
\end{enumerate}
Note that neither $X^\diamond$ nor $\overline{X}$ are generally nerves of
posets.   

Observe that  the simplicial sets $\hat{X}$ and $\check{X}$ are
contractible (to the upper or lower bound) we have that $X^\diamond$
is weakly equivalent to the 
unreduced suspension of $\ddot{X}$, and $\overline{X}$ is weakly equivalent
to the reduced suspension of $X^\diamond$.   

The \dfn{order complex} of a poset $X$, as
studied in algebraic combinatorics, is the simplicial
complex whose 
$q$-simplices are strictly ordered chains $\ol{0}<x_0< x_1< \cdots <
x_{q}<\ol{1}$ of 
elements in $X$ which are not upper or lower bounds.  It corresponds
precisely to the simplicial set 
$\ddot{X}$.

\subsection{Partition complex}
\label{subsec:partition-complex}

Fix $m\geq0$.  Let $P=P_m$ denote the poset of equivalence relations
on the set $\underline{m}=\{1,\dots,m\}$, ordered by refinement.
Thus, a $q$-simplex of $P$ is a chain $[E_0\leq \cdots \leq
E_q]$ of equivalence relations, where we write $E\leq E'$ if $E$ is
``finer'' than $E'$, i.e., if $x\sim_E
y$ implies $x\sim_{E'}y$ for all $x$ and $y$ in the set.

There is an evident action of the symmetric group $\Sigma_m$ on $P_m$,
obtained from the action of $\Sigma_m$ on $\underline{m}$.

We obtain subcomplexes $\hat{P}_m$, $\check{P}_m$, $P_m^\diamond$ of
$P_m$ and a quotient complex $\ol{P}_m=P_m/P_m^\diamond$, all of which
inherit the $\Sigma_m$-action.

\subsection{Pure partitions}
\label{subsec:pure-partition}

Let $m=p^k$ for some $k\geq0$.  A \dfn{pure partition} of a set is
an equivalence relation $E$ such that all the equivalence classes of
$E$ have the same size.  Thus, a partition $E$ of $\underline{m}$ is
pure if and only if all the equivalence classes have order $p^j$,
for some fixed $j\in \{0,\dots,k\}$.  We say that the \dfn{mesh} of such a
pure partition is $j$; we write $\mesh(E)=j$.

The following is elementary but crucial.
\begin{prop}
Let $E$ be a partition of $\underline{m}$, and let $H\leq \Sigma_m$ be
the subgroup consisting of all permutations which fix the partition $E$.
Then $E$ is a pure partition if and only if $H$ acts transitively on
$\ul{m}=\{1,\dots,m\}$. 
\end{prop}

\begin{cor}\label{cor:transitive-fixed-simplex-is-pure}
If $H\leq \Sigma_m$ is a subgroup which acts transitively on
$\ul{m}=\{1,\dots,m\}$, then any $q$-simplex $[E_0\leq \cdots \leq
E_q]$ of $P_m$ which is fixed by $H$ consists of a chain of pure
partitions.  
\end{cor}

\section{The relation between standard resolutions}
\label{sec:relation-standard-res}

This section contains the key observation of this paper: that the bar
resolution of $\Delta$, which we need to understand in order to show
that $\Delta$ is Koszul, is a linearization of the bar resolution of
$\algapprox$, which can be expressed in terms of partition complexes.

\subsection{Transitive $E$-homology}
\label{subsec:transitive-e-homology}

Let $S\subset \underline{2}^{\underline{m}}$ denote the set of
\emph{surjective} functions $\underline{m}\ra \underline{2}$, equipped with
the evident $\Sigma_m$-action; it is a set of order $2^m-2$.

For a spectrum $Z$ equipped with a $\Sigma_m$-action, define 
\[
Q(Z) \defeq \Cok[ \cE{*}(Z\sm \Sip S)_{h\Sigma_m} \ra \cE{*}Z_{h\Sigma_m}],
\]
where the map is the one induced by projection $\pi\colon S\ra *$.
For lack of a better name, we will call this the \dfn{transitive
  $E$-homology} of the $\Sigma_m$-spectrum $Z$.  We extend this
notation to spaces: for a $\Sigma_m$-space $X$, we write $Q(X):=
Q(\Sip X)$, while for a based $\Sigma_m$-space $Y$ we write
$\wt{Q}(Y):= Q(\Si Y)$.  

Note that the functor $Q$ is not a
homology theory; however, it does take
finite coproducts of $\Sigma_m$-spaces to direct sums.
The name comes from the fact that $Q$ only sees
$\Sigma_m$-orbits with transitive isotropy, according to the following
proposition.

\begin{lemma}\label{lemma:transitive-homology-vanishing}
Suppose that $X$ is a finite $\Sigma_m$-set such that for all $x\in
X$, the isotropy group $H\subseteq \Sigma_m$ of $x$ does not act
transitively on $\underline{m}$.  Then $Q(X)=0$.
\end{lemma}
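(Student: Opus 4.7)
The plan is to reduce to orbits and exhibit a splitting of the projection at the level of homotopy orbits.

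First I would observe that any finite $\Sigma_m$-set $X$ decomposes as a finite disjoint union of orbits $\coprod_i \Sigma_m/H_i$, where each $H_i$ is (by hypothesis) a non-transitive subgroup of $\Sigma_m$. Since $\cE{*}$ sends finite coproducts of spaces to direct sums, the functor $Q$ sends finite coproducts of $\Sigma_m$-sets to direct sums (this is already noted in the paragraph before the lemma). Hence it suffices to prove $Q(\Sigma_m/H) = 0$ for a single subgroup $H \subseteq \Sigma_m$ which does not act transitively on $\underline{m}$.

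For such an $H$, the standard identifications give $(\Sigma_m/H)_{h\Sigma_m} \simeq BH$ and $(\Sigma_m/H \times S)_{h\Sigma_m} \simeq S_{hH}$ (using the diagonal action, together with the freeness of the $\Sigma_m$-action on $\Sigma_m/H$). Under these identifications, the projection $\pi\colon X \times S \to X$ becomes the map $S_{hH} \to BH$ induced by $S \to \mathrm{pt}$. It therefore suffices to show that $\cE{*}(S_{hH}) \to \cE{*}BH$ is a split surjection.

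The key point is that non-transitivity of $H$ on $\underline{m}$ means $\underline{m}$ decomposes into at least two $H$-orbits, so we can pick any nonempty proper union of $H$-orbits and send it to $1$ and the rest to $2$, obtaining an $H$-fixed surjection $f \in S^H$. Such a fixed point determines an $H$-equivariant map $\mathrm{pt} \to S$ whose composition with $S \to \mathrm{pt}$ is the identity; taking homotopy orbits yields a map $BH \to S_{hH}$ which splits the projection $S_{hH} \to BH$. Applying $\cE{*}$ then gives a splitting of $\cE{*}(S_{hH}) \to \cE{*}BH$, so this map is (split) surjective and its cokernel $Q(\Sigma_m/H)$ vanishes.

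I do not anticipate any real obstacle: the only content is constructing the $H$-fixed surjection, which is immediate from non-transitivity. One should just be careful to note that the splitting is constructed at the space level before applying $\cE{*}$, so functoriality of $\cE{*}$ automatically yields the splitting of $E_*$-modules; no subtleties about $K(n)$-localization or the failure of the wedge axiom enter, since everything here involves only finite sets and their $BH$'s.
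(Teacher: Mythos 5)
Your proof is correct and is essentially the paper's argument: both reduce to a single orbit $\Sigma_m/H$ using that $Q$ preserves finite coproducts, use non-transitivity of $H$ to produce an $H$-invariant surjection $\underline{m}\ra\underline{2}$, and use it to split the projection at the space level before applying $\cE{*}$. The only cosmetic difference is that you phrase the splitting $H$-equivariantly after identifying the homotopy orbits with $BH$ and $S_{hH}$, whereas the paper writes the same section $\Sigma_m$-equivariantly as $\sigma H\mapsto(\sigma H, f\sigma^{-1})$ on $\Sigma_m/H\times S$.
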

\begin{proof}
Since $Q$ preserves finite coproducts, it suffices to consider
$X=\Sigma_m/H$.  If $H$ does not act transitively on $\underline{m}$,
there exists a surjective function $f\colon \underline{m}\ra
\underline{2}$ which is invariant under the $H$-action, and thus
$\sigma H\mapsto (\sigma H,f\sigma^{-1})\colon X\ra X\times S$ is a
$\Sigma_m$-equivariant section of the projection $X\times S\ra X$.
\end{proof}

We will typically apply $Q$ and 
$\widetilde{Q}$ 
to \emph{discrete} $\Sigma_m$-spaces, obtaining functors
\[
Q\colon G\Set\ra \Mod{E_*},\qquad \wt{Q}\colon G\Set_*\ra
\Mod{E_*}. 
\]

Typically, we will have a
$\Sigma_m$-equivariant simplicial set $X$, and we will apply $Q$ to
each simplicial degree of $X$ separately; thus, if $X$ is a pointed
$\Sigma_m$-equivariant simplicial set, $\widetilde{Q}(X)$ denotes
the simplicial abelian group with
\[
\widetilde{Q}(X)_q \defeq \widetilde{Q}(X_q).
\]

Because the functor $Q$ is actually defined on $\Sigma_m$-spectra, it
is functorial not merely with respect to maps of $\Sigma_m$-sets, but
also with respect to transfers.  That is, 
we
have the following observation. 
\begin{prop}\label{prop:transitive-homology-mackey}
The functor $Q$ is equipped with  the structure of a $\Sigma_m$-Mackey functor.  
\end{prop}
Furthermore, because $Q$ is really defined on $p$-local spectra, we
have the following ``transfer splitting'' result.

\begin{cor}\label{cor:transfer-splitting}
Let $Z$ be a finite $\Sigma_m$-set with order prime to $p$.
Then there exist maps
\[
Q(X) \xra{i} Q(X\times \Sigma_m/H) \xra{j} Q(X),
\]
natural in the $\Sigma_m$-set $X$, such that $ji$ is an isomorphism.
\end{cor}
\begin{proof}
If $Z$ is a finite $G$-set with order prime to $p$, it is standard 
that the composite 
\[
\Sip(*)_{hG} \ra \Sip(Z)_{hG} \ra \Sip(*)_{hG}
\]
of the transfer along $Z\ra *$ followed by the projection along $Z\ra
*$ 
is a $p$-local equivalence.  Applying $Q$ to this sequence gives the
desired result.
\end{proof}

\subsection{Transitive $E$-homology as a linearization}

Fix a space $W$ equipped with a $\Sigma_m$-action.
Let $F\colon \hSpectra\ra \Mod{E_*}$ be the functor defined by
\[
F(Y) \defeq \cE{*}(\Sip W\sm Y^{\sm m})_{h\Sigma_m}.
\]
Let $\lin_F\colon \hSpectra \ra \Mod{E_*}$ be the linearization of
$F$, as described in \S\ref{sec:linearization}.
\begin{prop}\label{prop:transitive-homology-linearization}
For all spaces $X$ there is a natural isomorphism
\[
\lin_F(\Sip X) \ra Q(W\times X^m).
\]
\end{prop}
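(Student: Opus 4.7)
The plan is to unwind both sides of the claimed isomorphism and verify they compute the same cokernel. Write $Y = \Sip X$. By distributivity of smash over wedge,
\[
(\Sip X \vee \Sip X)^{\sm m} \approx \bigvee_{f \colon \underline{m}\to \underline{2}} \Sip(X^m),
\]
where $\Sigma_m$ permutes wedge summands by $\sigma\cdot f = f\circ \sigma^{-1}$ and acts on each $X^m$ by coordinate permutation. Smashing with $\Sip W$ and applying $\cE{*}(-)_{h\Sigma_m}$ then converts this wedge decomposition into a direct sum decomposition indexed by the $\Sigma_m$-set $\underline{2}^{\underline{m}}$.

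Next, I would split $\underline{2}^{\underline{m}} = \{c_1\}\sqcup \{c_2\}\sqcup S$ as $\Sigma_m$-sets, where $c_1,c_2$ are the two constant (hence $\Sigma_m$-fixed) functions and $S$ is the set of surjections. This yields
\[
F(\Sip X \vee \Sip X) \;\approx\; F(\Sip X)\oplus F(\Sip X)\oplus \cE{*}(W\times X^m\times S)_{h\Sigma_m},
\]
with the first two summands coming from $c_1$ and $c_2$ (each contributing a copy of $\cE{*}(W\times X^m)_{h\Sigma_m} = F(\Sip X)$) and the third from $S$.

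I would then read off the three maps in question. The iterated smash $p_1^{\sm m}$ is zero on every wedge summand except the one indexed by $c_1$, on which it is the identity; symmetrically for $p_2^{\sm m}$ and $c_2$. The fold $(p_1+p_2)^{\sm m}$ restricts to the identity on every wedge summand. Under the decomposition above, this means $F(p_1)$ and $F(p_2)$ are projections onto the first and second summands, while $F(p_1+p_2)$ sends $(u,v,w)$ to $u + v + \pi_\ast(w)$, where $\pi \colon W\times X^m\times S \to W\times X^m$ is the projection.

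Consequently $F(p_1+p_2) - F(p_1) - F(p_2)$ annihilates the first two summands and equals $\pi_\ast$ on the third, so taking its cokernel produces precisely
\[
\Cok\!\left[\cE{*}(W\times X^m\times S)_{h\Sigma_m} \xra{\pi_\ast} \cE{*}(W\times X^m)_{h\Sigma_m}\right] = Q(W\times X^m),
\]
matching the definition of transitive $E$-homology from \S\ref{subsec:transitive-e-homology}. Naturality in $X$ is immediate from the construction. The only potentially delicate point is the bookkeeping of basepoints and the identification of the $\Sigma_m$-action on the set $\underline{2}^{\underline{m}}$ of wedge summands; once that is straight, the rest is a direct unwinding of the definition of $\lin_F$.
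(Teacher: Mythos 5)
Your proof is correct and follows essentially the same route as the paper: the paper packages your decomposition of $(\Sip X\vee\Sip X)^{\sm m}$ by functions $\underline{m}\ra\underline{2}$ as the split cofibration sequence $Y^{\sm m}\sm\Sip S\ra (Y\vee Y)^{\sm m}\ra Y^{\sm m}\vee Y^{\sm m}$ of $\Sigma_m$-spectra, and then identifies $\gamma_F$ with the map induced by $S\ra *$, which is exactly your identification of $F(p_1+p_2)-F(p_1)-F(p_2)$ with $\pi_*$ on the surjection summand. The only differences are presentational (explicit three-summand bookkeeping versus the split cofibration sequence and the exact sequence $\eet F\ra F\ra \lin_F\ra 0$).
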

\begin{proof}
The sequence
\[
Y^{\sm m}\sm \Sip S \ra (Y\vee Y)^{\sm m} \xra{(p_1,p_2)} Y^{\sm
  m}\vee Y^{\sm m}
\]
is a split cofibration sequence in the category of $\Sigma_m$-spectra,
from which it follows that the map $\gamma_F\colon \perp F(Y)\ra F(Y)$ is
isomorphic to the map $\cE{*}(\Sip W\sm Y^{\sm m}\sm \Sip
S)_{h\Sigma_m}\ra 
\cE{*}(\Sip W\sm Y^{\sm m})_{h\Sigma_m}$ induced by projection $S\ra *$.
\end{proof}

\subsection{Transitive $E$-homology of the partition complex}

We have the following.
\begin{prop}\label{prop:transfer-partition}
Let $P=P_m$ be the nerve of the partition poset.  Then the sequence of
simplicial abelian groups
\[
0\ra Q(\ddot{P}) \xra{(\text{incl.},-\text{incl.})} Q(\hat{P})\oplus
Q(\check{P}) \xra{(\text{incl.},\text{incl.})} Q(P) \ra 
\widetilde{Q}(\overline{P}) \ra 0
\]
is exact.
\end{prop}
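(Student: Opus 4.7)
My plan is to reduce to a direct-sum decomposition of $Q$ over $\Sigma_m$-orbits, and to check exactness of the resulting simplicial abelian groups level by level.  First, I would observe that additivity of $\cE{*}((-)_+)_{h\Sigma_m}$ and $\cE{*}((-\times S)_+)_{h\Sigma_m}$ on disjoint unions of finite $\Sigma_m$-sets gives
\[
Q(Y) \approx \bigoplus_{[y] \in Y/\Sigma_m} Q(\Sigma_m/\operatorname{Stab}(y)),
\]
and that the preceding lemma implies only orbits with transitive stabilizer contribute.  Consequently, for any inclusion $Y'\subseteq Y$ of $\Sigma_m$-sets, the $\Sigma_m$-orbits of $Y'$ form a subset of those of $Y$, so the induced map $Q(Y')\ra Q(Y)$ realizes $Q(Y')$ as a direct summand of $Q(Y)$; in particular, at each simplicial degree $Q$ carries inclusions of $\Sigma_m$-sets to split injections.

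Next, I would fix a simplicial degree $q$ and decompose the $\Sigma_m$-set $P_q$ into the four $\Sigma_m$-invariant pieces $\ddot{P}_q$, $\hat{P}_q\setminus\ddot{P}_q$, $\check{P}_q\setminus\ddot{P}_q$, and $P_q\setminus P^\diamond_q$, consisting respectively of $q$-chains avoiding both $\minel$ and $\maxel$, those avoiding $\minel$ only, those avoiding $\maxel$ only, and essential chains containing both.  Applying $Q$ yields
\begin{align*}
Q(\hat{P}_q) &\approx Q(\ddot{P}_q) \oplus Q(\hat{P}_q\setminus \ddot{P}_q), \\
Q(\check{P}_q) &\approx Q(\ddot{P}_q) \oplus Q(\check{P}_q\setminus \ddot{P}_q), \\
Q(P_q) &\approx Q(\ddot{P}_q) \oplus Q(\hat{P}_q\setminus \ddot{P}_q) \oplus Q(\check{P}_q\setminus \ddot{P}_q) \oplus Q(P_q\setminus P^\diamond_q),
\end{align*}
and, since the basepoint of $\overline{P}_q$ is the image of $P^\diamond_q$, the pointed $\Sigma_m$-set $\overline{P}_q$ equals $(P_q\setminus P^\diamond_q)_+$, so $\widetilde{Q}(\overline{P}_q) \approx Q(P_q\setminus P^\diamond_q)$.

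With these decompositions in hand, each of the four maps in the proposition becomes an explicit inclusion or projection of summands, and exactness reduces to a routine bookkeeping check: $(\text{incl.},-\text{incl.})$ is the antidiagonal embedding into $Q(\ddot{P}_q)^{\oplus 2}$, hence injective; the fold map $(\text{incl.},\text{incl.})$ has kernel precisely that antidiagonal image and image $Q(\ddot{P}_q)\oplus Q(\hat{P}_q\setminus\ddot{P}_q)\oplus Q(\check{P}_q\setminus\ddot{P}_q) \approx Q(P^\diamond_q)$, which coincides with the kernel of the projection $Q(P_q)\to\widetilde{Q}(\overline{P}_q)$; and that projection is manifestly surjective.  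The one subtle point I would verify carefully at the outset is that $Q$ really does preserve the direct-sum decomposition attached to a disjoint union of $\Sigma_m$-sets; this follows from additivity of completed $E$-homology on finite coproducts combined with the cokernel definition of $Q$, and is the only place the argument uses anything beyond pure combinatorics.
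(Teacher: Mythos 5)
Your proposal is correct and follows essentially the same route as the paper: decompose each $P_q$ as the $\Sigma_m$-set coproduct $\ddot{P}_q\amalg(\hat{P}_q\setminus\ddot{P}_q)\amalg(\check{P}_q\setminus\ddot{P}_q)\amalg(P_q\setminus P_q^\diamond)$ and use that $Q$ takes finite coproducts to direct sums, after which exactness is levelwise bookkeeping. The paper's proof is just a terser version of exactly this argument.
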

\begin{proof}
For $q\geq0$, let $P_q$ denote the $q$-simplices of $P$.  Then
\[
P_q \approx (P_q-P_q^\diamond)\amalg (\hat{P}_q-\ddot{P}_q)\amalg
(\check{P}_q-\ddot{P}_q) 
\amalg \ddot{P}_q
\]
as $\Sigma_m$-sets, and the result follows because $Q$ preserves
coproducts.
\end{proof}

\subsection{The partition complex and $E_\infty$-operads}

A \dfn{symmetric sequence} is a functor $A\colon \Sigma\ra \Spaces$,
where, $\Sigma$ denotes the groupoid of finite sets and isomorphisms.
A symmetric sequence $A$ determines a functor 
\[
\mathcal{C}_A\colon \Spaces\ra \Spaces\quad\text{by}\quad
\mathcal{C}_A(X) \defeq \coprod_{m\geq0} (A(\underline{m})\times
X^m)_{\Sigma_m},
\]
and the assignment $A\mapsto \mathcal{C}_A$ is functorial.
There is a monoidal product $A,B\mapsto A\circ B$, which has the
property that $\mathcal{C}_{A\circ B}\approx
\mathcal{C}_A\mathcal{C}_B$.
This monoidal product satisfies the formula
\[
(A\circ B)(S) \approx \coprod_{n,\; f\colon S\ra \underline{n}}
A(\underline{n})\times \prod_{s\in S} B(f^{-1}(s))
\]
where the coproduct runs over integers $n\geq0$ and functions $f\colon
S\ra \underline{n}$.   An operad is a monoid with respect to this
monoidal product.

If $O$ is an operad, then
$\B(O)=\B(O,O,O)$ is a simplicial object in
symmetric sequences.  
\begin{prop}\label{prop:iso-bar-c-with-partition-general}
Let $O$ be the non-unital $E_\infty$-operad in spaces. 
For each $m\geq0$ there is a map $\B(O)(\ul{m})\ra P_m$ of simplicial
$\Sigma_m$-spaces, which is a weak equivalence of spaces in each
simplicial degree.  That is, $O^{\circ (q+2)}(\ul{m}) \ra
(P_m)_q$ is a weak 
equivalence. 
\end{prop}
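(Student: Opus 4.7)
The plan is to show that in each simplicial degree $q$, the space $O^{\circ(q+2)}(\underline{m})$ decomposes as a disjoint union of contractible pieces naturally indexed by $(P_m)_q$, and then to define the required map by collapsing each piece to its label.

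First, I would iterate the composition formula
\[
(A\circ B)(S) \approx \coprod_{n,\,f\colon S\to \underline{n}} A(\underline{n})\times \prod_{i\in \underline{n}} B(f^{-1}(i))
\]
(understood with appropriate $\Sigma_n$-identifications, and noting that the non-unital hypothesis $O(\emptyset)=\emptyset$ forces each $f$ to be surjective). Applying this $q+1$ times to $A=B=O$ presents $O^{\circ(q+2)}(\underline{m})$ as a coproduct whose summands are indexed (up to isomorphism of intermediate sets) by composable sequences of surjections
\[
\underline{m}\twoheadrightarrow T_0\twoheadrightarrow T_1\twoheadrightarrow \cdots \twoheadrightarrow T_q,
\]
each summand being a finite product of factors of the form $O(T)$, one for $T_q$ and one for each non-empty fiber of every intermediate surjection.

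Second, I would re-index by $(P_m)_q$. Up to isomorphism of the $T_i$, a composable sequence as above is equivalent data to a chain of equivalence relations $[E_0\leq E_1\leq \cdots \leq E_q]$ on $\underline{m}$, namely $x\sim_{E_i}y$ iff $x$ and $y$ are sent to the same element of $T_i$; conversely any $q$-simplex of $P_m$ arises this way by taking $T_i\defeq \underline{m}/E_i$. This gives a $\Sigma_m$-equivariant bijection between indexing data and $(P_m)_q$. Because $O$ is $E_\infty$, every $O(T)$ with $|T|\geq 1$ is contractible, so each summand is contractible. Sending each summand to its labeling simplex then defines a map $O^{\circ(q+2)}\langle m\rangle \to (P_m)_q$ which is a weak equivalence in each simplicial degree.

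Third, I would check that these degreewise maps assemble into a map of simplicial objects. The $i$-th face operator on $\B(O)=\B(O,O,O)$ is the operad multiplication $O\circ O\to O$ applied to the $i$-th adjacent pair of factors. Under the identification above, this amounts to composing two adjacent surjections in the chain---equivalently, deleting the $i$-th equivalence relation from the chain---which is exactly the $i$-th face operator on the nerve of the poset $P_m$.

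The main obstacle will be the bookkeeping in the first two steps: the composition formula absorbs the $\Sigma_n$-action on each intermediate labeled set $\underline{n}$ via a coend-style identification, and some care is needed to confirm that, after iterated composition, the orbit set of indexing data coincides precisely with the set of chains of equivalence relations on $\underline{m}$ rather than a larger set of ``labeled'' chains. A clean way to organize this is to pick the canonical representative $T_i\defeq \underline{m}/E_i$ once and for all, breaking the residual $\Sigma_n$-symmetries and trading them for the natural $\Sigma_m$-action permuting the elements of $\underline{m}$.
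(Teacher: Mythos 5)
Your proposal is correct and is essentially the argument the paper has in mind: the paper's proof simply cites this "standard combinatorial argument" and observes that for the non-unital commutative operad the construction yields an isomorphism $\B(O)\langle m\rangle\approx P_m$, so that for a general non-unital $E_\infty$-operad the evident collapse (equivalently, the map induced by the augmentation to the commutative operad) is a degreewise weak equivalence because each summand is a product of contractible spaces $O(T)$ — exactly your decomposition indexed by chains of surjections/equivalence relations. Your closing concern about the $\Sigma_n$-identifications is resolved as you suggest (canonical representatives $T_i=\underline{m}/E_i$), the key point being that $\Sigma_n$ acts freely on the set of \emph{surjections} $S\to\underline{n}$, so the coend identifications only relabel summands and never quotient within one.
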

\begin{proof}
  A standard and well-known combinatorial argument.  In fact, taking
  $O$ to be the non-unital commutative operad, with $O(S)$ a one-point
  space for all non-empty $S$, gives an isomorphism
  $\B(O)(\ul{m})\approx P_m$ of simplicial $\Sigma_m$-spaces. 
\end{proof}

We may consider the monad $\wt{C}=C_O$ associated to
$O$, together 
with its associated monadic bar construction
$\B(\wt{C})=\B(\wt{C},\wt{C},\wt{C})$.
Evaluating at a space $X$ gives a simplicial
space $\B(\wt{C})(X)=
\B(\wt{C},\wt{C},\wt{C}(X))$, and applying
\eqref{prop:iso-bar-c-with-partition-general} leads to the following.
\begin{prop}\label{prop:iso-bar-c-with-partition}
There is an isomorphism
\[
\B(\wt{C})(X) \approx \coprod_{m\geq 0} \left(P_m\times X^m\right)_{h\Sigma_m},
\]
natural in CW-complexes $X$, 
in the category of simplicial objects in the homotopy category of
spaces.  In particular, for each $q\geq 0$ there are natural weak
equivalences 
\[
\wt{C}^{\circ (q+2)}(X) \approx \coprod_{m\geq0}\left((P_m)_q \times
  X^m\right)_{h\Sigma_m}. 
\]
\end{prop}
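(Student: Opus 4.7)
The plan is to deduce this from Proposition \ref{prop:iso-bar-c-with-partition-general}, using the standard compatibility between operadic composition of symmetric sequences and composition of their associated Schur functors $A\mapsto \mathcal{C}_A$. For any two symmetric sequences $A$ and $B$ there is a natural isomorphism $\mathcal{C}_A\circ \mathcal{C}_B\approx \mathcal{C}_{A\circ B}$, obtained directly by unpacking the formula
\[
(A\circ B)(S)\approx \coprod_{n,\; f\colon S\to \underline{n}} A(\underline{n})\times \prod_{s\in S} B(f^{-1}(s))
\]
given earlier in the excerpt and rearranging coproducts. This identification is compatible with the associativity and unit constraints, so it intertwines the monad structure on $\wt{C}=\mathcal{C}_O$ with the operad structure on $O$.

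First I would iterate this isomorphism. For each $q\geq 0$ we obtain a natural equivalence
\[
\wt{C}^{\circ(q+2)}(X)\approx \coprod_{m\geq 0}\bigl(O^{\circ(q+2)}\langle m\rangle \times X^m\bigr)_{h\Sigma_m},
\]
and compatibility with the monad structure implies that the face and degeneracy maps of the bar construction $\B(\wt{C},\wt{C},\wt{C})(X)$ correspond, under these isomorphisms, to the face and degeneracy maps of the simplicial $\Sigma_m$-space $\B(O)\langle m\rangle$ (crossed with $\mathrm{id}_{X^m}$ and passed to $h\Sigma_m$). Consequently, as simplicial objects in the homotopy category of spaces,
\[
\B(\wt{C})(X)\approx \coprod_{m\geq 0}\bigl(\B(O)\langle m\rangle \times X^m\bigr)_{h\Sigma_m}.
\]

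Second, I apply the $\Sigma_m$-equivariant simplicial weak equivalence $\B(O)\langle m\rangle \to P_m$ furnished by Proposition \ref{prop:iso-bar-c-with-partition-general}. Crossing with $X^m$ and forming $h\Sigma_m$ preserves this weak equivalence in each simplicial degree when $X$ is a CW complex (since then $X^m$ is $\Sigma_m$-equivariantly CW and $O^{\circ(q+2)}\langle m\rangle$ is $\Sigma_m$-free up to homotopy, as $O$ is $E_\infty$). This yields the desired natural weak equivalence of simplicial objects, and the degreewise statement $\wt{C}^{\circ(q+2)}(X)\approx \coprod_{m\geq 0}((P_m)_q\times X^m)_{h\Sigma_m}$ is its value at each simplicial level.

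The step requiring the most care is verifying that the degreewise weak equivalences $O^{\circ(q+2)}\langle m\rangle \to (P_m)_q$ of the previous proposition assemble into an honest map of simplicial spaces $\B(O)\langle m\rangle \to P_m$, not merely a collection of unrelated degreewise equivalences. I would handle this via the model indicated in the proof of Proposition \ref{prop:iso-bar-c-with-partition-general}: write $K$ for the non-unital commutative operad, with $K(S)$ a one-point space for nonempty $S$. Then there is a literal isomorphism of simplicial $\Sigma_m$-spaces $\B(K)\langle m\rangle \approx P_m$, and the unique map of operads $O\to K$ then induces a genuine simplicial map $\B(O)\langle m\rangle \to \B(K)\langle m\rangle \approx P_m$, which is a degreewise weak equivalence by the previous proposition. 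Naturality in the CW-complex $X$ is automatic from the construction.
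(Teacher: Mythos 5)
Your proposal is correct and follows essentially the same route the paper intends: the paper gives no separate proof, simply asserting that applying \eqref{prop:iso-bar-c-with-partition-general} to the monadic bar construction (via the identification $\mathcal{C}_{A\circ B}\approx \mathcal{C}_A\mathcal{C}_B$) yields the statement, and your use of the map of operads $O\to K$ to the non-unital commutative operad to realize $\B(O)\langle m\rangle\to P_m$ as an honest simplicial map is exactly the mechanism suggested in the paper's proof of that proposition.
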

Observe that there is a coproduct decomposition
\[
\B(\wt{C})\approx \coprod_{m\geq1} \B(\wt{C})\langle m\rangle
\]
in the category of simplicial functors, so that there are natural weak
equivalences 
\[
\B(\wt{C})\langle m\rangle(X) \approx (P_m\times X^m)_{h\Sigma_m}.
\]

\subsection{A fundamental observation}
\label{subsec:fundamental-obs}

Our approach to proving that $\Delta$ is a Koszul ring comes from the
following observation: if we apply $E$-homology to $\B(\wt{C})(X)$ in each
simplicial degree, and then ``linearize''  with respect
to $X$, and 
set $X=*$, then what we obtain is the bar complex $\B(\Delta)$ for the
ring $\Delta$.  This linearization, in turn, turns out to be exactly
the transitive homology of the partition complex.

\begin{prop}\label{prop:fundamental-observation-monad-koszul}
For $m=p^k$, there is an isomorphism of simplicial abelian groups
\[
\B(\Delta)[k] \ra Q(P_m).
\]
Furthermore, this isomorphism carries the subobjects
$\hat{\B}(\Delta)[k]$, $\check{\B}(\Delta)[k]$, and
$\ddot{\B}(\Delta)[k]$ isomorphically to the subobjects
$Q(\hat{P}_m)$, $Q(\check{P}_m)$, and $Q(\ddot{P}_m)$.
\end{prop}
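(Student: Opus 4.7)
The plan is to assemble several identifications already established. By Proposition~\ref{prop:linearization-of-t-bar-is-delta-bar} applied with $M = E_*$, there is a simplicial isomorphism $\B(\Delta)[k]\approx \lin_{\B(\talgapprox)\langle m\rangle}(E_*)$ for $m=p^k$. So in each simplicial degree $q$, the task reduces to identifying $\lin_{\talgapprox^{\circ(q+2)}\langle m\rangle}(E_*)$ with $Q((P_m)_q)$, compatibly with the simplicial structure inherited from the monad $\talgapprox$.

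The bridge to the partition complex proceeds in two steps. Since $E_*$ and $E_*\oplus E_*$ are finite and free, property~(4) of \S\ref{subsec:algebraic-functor} makes the approximation map into an isomorphism
\[
\talgapprox^{\circ(q+2)}\langle m\rangle((E_*)^{\oplus r}) \xra{\sim} \cE{*}\wt{\Pfree}^{\circ(q+2)}\langle m\rangle(E^{\vee r})
\]
for $r\in\{1,2\}$, compatible with the projections $p_i$ and codiagonal $p_1+p_2$ used in the linearization construction. Iterating the association $\wt{\Pfree}(E\sm\Sip X)\approx E\sm\Sip\wt{C}(X)$ of \S\ref{subsec:associations} and applying Proposition~\ref{prop:iso-bar-c-with-partition} gives
\[
\cE{*}\wt{\Pfree}^{\circ(q+2)}\langle m\rangle(E\sm\Sip X) \approx \cE{*}((P_m)_q \times X^m)_{h\Sigma_m}
\]
for any CW-complex $X$. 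Setting $X$ to be the one-point or two-point space and taking the cokernel of the linearization map, the result matches the hypothesis of Proposition~\ref{prop:transitive-homology-linearization} with $W=(P_m)_q$ and $Y=S^0$, yielding $\lin_{\talgapprox^{\circ(q+2)}\langle m\rangle}(E_*) \approx Q((P_m)_q)$. All identifications are natural with respect to the monad/operad structure producing face and degeneracy operators, so the result assembles into a simplicial isomorphism $\B(\Delta)[k]\approx Q(P_m)$.

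For the subobject clause, I would argue via the pure-weight decomposition. By Proposition~\ref{prop:linearization-of-t-bar-pure-part}, the pure-weight summand $\Delta[k_1]\otimes\cdots\otimes\Delta[k_{q+2}]$ of $\B_q(\Delta)[k]$ corresponds to the $\Sigma_m$-orbit in $(P_m)_q$ of uniform chains $[E_0\leq\cdots\leq E_q]$ whose block structure at each nesting depth is dictated by the exponents $m_i=p^{k_i}$. Non-uniform chains have non-transitive $\Sigma_m$-isotropy and hence vanish under $Q$, matching the vanishing of non-pure-weight summands in the linearization. Tracking the ``inside-out'' convention relating the iterates of $\wt{C}^{\circ(q+2)}$ to the positions in the chain, the outermost iterate (weight $m_1$) controls the number of blocks of the coarsest partition $E_q$, while the innermost iterate (weight $m_{q+2}$) controls the block-size of the finest partition $E_0$. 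Hence $k_{q+2}>0$ (the defining condition of $\hat{\B}(\Delta)$) matches $E_0\neq\minel$, i.e.\ the chain lies in $\hat{P}_m$; similarly $k_1>0$ corresponds to $E_q\neq\maxel$ and $\check{P}_m$, and both together to $\ddot{P}_m$. The main bookkeeping subtlety --- and essentially the only thing to watch --- is keeping this end-to-end correspondence straight between the two ends of the bar construction and the two endpoints of the partition chain.
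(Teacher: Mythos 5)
Your proposal is correct and follows essentially the same route as the paper's own proof: identify $\B(\Delta)[k]$ with $\lin_{\B(\talgapprox)\langle m\rangle}$ via \eqref{prop:linearization-of-t-bar-is-delta-bar}, identify $\B(\talgapprox)\langle m\rangle$ with $\cE{*}(P_m\times X^m)_{h\Sigma_m}$ (the paper cites \eqref{prop:algapprox-bar-is-c-bar} and \eqref{prop:iso-bar-c-with-partition}, whereas you rederive the former from the approximation isomorphism and the association, which is exactly how the paper proves it), apply \eqref{prop:transitive-homology-linearization}, and settle the subobject clause by tracing pure-weight summands through \eqref{prop:linearization-of-t-bar-pure-part}. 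Your bookkeeping of the inner/outer iterates against the endpoints $E_0$, $E_q$ of the chain matches the paper's, so nothing further is needed.
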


Note that if $m$ is not a power of $p$, then
$\widetilde{Q}(\overline{P}_m)=0$.  We give the proof of
\eqref{prop:fundamental-observation-monad-koszul} below.

\begin{cor}\label{cor:fundamental-observation-cor}
For $m=p^k$, $k\geq0$, there is an isomorphism of simplicial abelian groups
\[
\rB(\Delta)[k] \approx \widetilde{Q}(\overline{P}_{m}).
\]
\end{cor}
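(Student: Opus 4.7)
The plan is to deduce this corollary directly from the preceding proposition \eqref{prop:fundamental-observation-monad-koszul} by comparing two four-term exact sequences, one algebraic and one topological, whose cokernels are exactly the two sides of the asserted isomorphism.

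First, recall from \S\ref{subsec:bar-construction-rings} the exact sequence
\[
0 \ra \ddot{\B}(\Delta)[k] \ra \hat{\B}(\Delta)[k] \oplus \check{\B}(\Delta)[k] \ra \B(\Delta)[k] \ra \rB(\Delta)[k] \ra 0,
\]
which identifies $\rB(\Delta)[k]$ as the cokernel of the inclusion $\hat{\B}(\Delta)[k] \oplus \check{\B}(\Delta)[k] \ra \B(\Delta)[k]$. On the topological side, \eqref{prop:transfer-partition} provides the analogous exact sequence
\[
0 \ra Q(\ddot{P}_m) \ra Q(\hat{P}_m) \oplus Q(\check{P}_m) \ra Q(P_m) \ra \widetilde{Q}(\overline{P}_m) \ra 0,
\]
so that $\widetilde{Q}(\overline{P}_m)$ is the cokernel of $Q(\hat{P}_m) \oplus Q(\check{P}_m) \ra Q(P_m)$.

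Now apply \eqref{prop:fundamental-observation-monad-koszul}: the isomorphism $\B(\Delta)[k] \to Q(P_m)$ of simplicial abelian groups carries the subobjects $\hat{\B}(\Delta)[k]$, $\check{\B}(\Delta)[k]$, and $\ddot{\B}(\Delta)[k]$ onto $Q(\hat{P}_m)$, $Q(\check{P}_m)$, and $Q(\ddot{P}_m)$ respectively. Hence the middle three nontrivial terms of the two sequences are identified compatibly, and passing to the common cokernels yields the desired isomorphism $\rB(\Delta)[k] \approx \widetilde{Q}(\overline{P}_m)$.

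There is no real obstacle here beyond verifying that the comparison map is strictly compatible with the inclusions of the hat-check-ddot subcomplexes; but that compatibility is precisely what \eqref{prop:fundamental-observation-monad-koszul} records, so the corollary follows formally. The substantive content lies entirely in the preceding proposition; this corollary is just the statement about cokernels one gets for free.
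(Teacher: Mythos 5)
Your proposal is correct and is essentially the paper's own proof, which simply cites \eqref{prop:fundamental-observation-monad-koszul} together with \eqref{prop:transfer-partition}: one compares the four-term exact sequence $0\ra \ddot{\B}(\Delta)[k]\ra \hat{\B}(\Delta)[k]\oplus\check{\B}(\Delta)[k]\ra \B(\Delta)[k]\ra\rB(\Delta)[k]\ra 0$ with the sequence of \eqref{prop:transfer-partition} and passes to cokernels. Your write-up just makes explicit the formal cokernel comparison that the paper leaves implicit.
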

\begin{proof}
Immediate using \eqref{prop:fundamental-observation-monad-koszul} to
compare the exact sequences of \eqref{prop:exact-bar-complexes} and 
\eqref{prop:transfer-partition}. 
\end{proof}

Now we relate the simplicial object $\B(\wt{C})(X)$ with the
non-unital version $\talgapprox$ of the algebraic approximation
functor of \S\ref{subsec:augmented-rings}.

\begin{prop}\label{prop:algapprox-bar-is-c-bar}
If $X$ is a finite $\Sigma_m$-set, $m=p^k$, then there is an isomorphism
\[
\B(\talgapprox)\langle m\rangle(E_*X) \xra{\alpha}
\cE{*}\B(\wt{C})\langle m\rangle(X)
\]
of simplicial $E_*$-modules.
\end{prop}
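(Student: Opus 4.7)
The plan is to identify the map $\alpha$ in simplicial degree $q$ as a two-step composite
\[
\algapprox^{\circ(q+2)}\langle m\rangle(E_*X) \xra{\alpha_1} \pi_*L_{K(n)}\Pfree^{\circ(q+2)}\langle m\rangle(E\sm\Sip X) \xra{\alpha_2} \cE{*}\mathcal{C}^{\circ(q+2)}\langle m\rangle(X),
\]
show each arrow is an isomorphism, and then check that these assemble as $q$ varies into a morphism of simplicial $E_*$-modules.

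For $\alpha_1$, I would invoke property (4) of \S\ref{subsec:algebraic-functor} applied to $M=E\sm\Sip X$. Since $X$ is a finite set, $M$ is a finite wedge of copies of $E$, and hence $\pi_*M=E_*X$ is a finite free $E_*$-module; property (4) gives directly that the weight-$m$ approximation map is an isomorphism for every $q\geq 0$. No further work is needed for this step.

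For $\alpha_2$, I would iterate the association $E\sm\Sip$ between $\mathcal{C}$ and $\Pfree$ from \S\ref{subsec:associations}, which at a single application is a grading-preserving weak equivalence $\Pfree(E\sm\Sip Y)\xra{\sim} E\sm\Sip\mathcal{C}(Y)$. Induction on $q$ then yields a grading-preserving weak equivalence $\Pfree^{\circ(q+2)}(E\sm\Sip X)\xra{\sim} E\sm\Sip\mathcal{C}^{\circ(q+2)}(X)$ of $E$-modules: at each inductive step one applies $\Pfree$ (a well-defined functor on $h\Mod{E}$, hence preserving equivalences) to the previously constructed equivalence and then composes with a fresh single-step association. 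Applying $\pi_*L_{K(n)}$ and restricting to weight $m$ yields $\alpha_2$.

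To verify that $\alpha=\alpha_2\circ\alpha_1$ is a morphism of simplicial objects, I would use that the face and degeneracy operators of both bar constructions are built from the units and multiplications of the monads $\algapprox$ and $\mathcal{C}$ together with the projections onto weight-$m$ summands, and that both the approximation map $\alpha$ (as an association between $\algapprox$ and $\Pfree$) and the topological map $E\sm\Sip$ (as an association between $\mathcal{C}$ and $\Pfree$) are by definition compatible with all of this structure. The main obstacle I anticipate is in the inductive step for $\alpha_2$: one needs to match the decomposition of $\Pfree^{\circ(q+2)}\langle m\rangle$ into pure-weight summands described in \S\ref{subsec:graded-exponential} term-for-term with the corresponding decomposition of $\mathcal{C}^{\circ(q+2)}\langle m\rangle$, and confirm that each summand is an equivalence after $\pi_*L_{K(n)}$. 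This is a formal consequence of $E\sm\Sip$ being a morphism of graded exponential monads, but the weight bookkeeping is where the care is required.
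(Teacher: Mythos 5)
Your proposal is correct and is essentially the paper's argument spelled out: the paper's one-line proof rests on the identification $(\algapprox^{\circ q})\langle m\rangle(E_*X)\approx \cE{*}(O^{\circ q}(\underline{m})\times X^m)_{h\Sigma_m}$, which is exactly your two steps — the iterated approximation isomorphism of \S\ref{subsec:algebraic-functor}(4) for the finite free module $E_*X$, plus the iterated, grading-compatible association $E\sm\Sip$ between $\mathcal{C}$ and $\Pfree$ — with simpliciality following from compatibility of both associations with the monad structures.
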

\begin{proof}
This amounts to the fact that
\[
(\talgapprox^{\circ q})\langle m\rangle (E_*X) \approx \cE{*} (O^{\circ q}(\underline{m}) \times
X^m)_{h\Sigma_m},
\]
where $O$ is a non-unital $E_\infty$-operad, using the fact that the
algebraic approximation functor is an isomorphism on finitely
generated free $E_*$-modules such as $E_*X$
\eqref{subsec:algebraic-functor}(3). 
\end{proof}

\begin{proof}[Proof of
  \eqref{prop:fundamental-observation-monad-koszul}]
By \eqref{prop:algapprox-bar-is-c-bar} and
\eqref{prop:iso-bar-c-with-partition}, there are isomorphisms
\[
\B(\talgapprox)\langle m\rangle(E_*X)\approx
\cE{*}\B(\wt{C})\langle m\rangle(X)\approx
\cE{*}(P_m\times X^m)_{h\Sigma_m}
\]
of simplicial $E_*$-modules, natural in $\Sigma_m$-sets $X$.
Applying \eqref{prop:transitive-homology-linearization}, we see that
\[
\lin_{\B(\talgapprox)\langle m\rangle}(\Sip X) \approx Q(P_m\times X^m).
\]
The isomorphism $\B(\Delta)[k]\approx Q(P_m)$ follows from the isomorphism 
\[
\lin_{\B(\algapprox)}(M)\approx \B(\Delta)\otimes_{E_0}M
\]
of \eqref{prop:linearization-of-t-bar-is-delta-bar}.

To see that each of these isomorphisms induces isomorphisms
of the relevant subobjects
\[
\hatB(\Delta)[k]\approx Q(\hat{P}_m),\quad \chB(\Delta)[k]\approx
Q(\check{P}_m),\quad \ddB(\Delta)[k]\approx Q(\ddot{P}_m),
\]
we recall from \eqref{prop:linearization-of-t-bar-pure-part} that the
summands $\Delta[k_{q+1}]\otimes_{E_0} \cdots
\otimes_{E_0}\Delta[k_{0}]$ of 
$\Delta^{\otimes (q+2)}$ come from the linearization of the pure part
$\algapprox \langle m_{q+1}\rangle\circ\cdots\circ \algapprox \langle
m_{0}\rangle$ of $\talgapprox^{\circ (q+2)}$, where $m_i=p^{k_i}$.

On the other hand, since $Q$ vanishes on $\Sigma_m$-orbits whose
isotropy does not act transitively on $\ul{m}$
\eqref{lemma:transitive-homology-vanishing},  we see from
\eqref{cor:transitive-fixed-simplex-is-pure} that
$Q(P_m)$ depends only on the the pure part of the
partition complex; i.e., $Q(P_{m,q})=Q(P_{m,q}^{\mr{pure}}$) where
$P_{m,q}$ is the set of $q$-simplices of $P_m$ and
$P_{m,q}^{\mr{pure}}\subseteq P_{m,q}$ is the subset consisting of
chains of pure partitions.

Thus  $\B_q(\Delta)[k]$
  is actually isomorphic to $Q$ of the subset of $P_{m,q}$ consisting of chains
  $[E_0\leq\cdots \leq E_q]$ consisting of pure partitions with
  $\mesh(E_k)=m_0+\cdots+ m_k$.  Tracing through the isomorphism, we see
  that the summand
$\hatB_q(\Delta)[k]$ corresponds to the subset of chains of pure
partitions with $m_0>0$, while  $\chB_q(\Delta)[k]$ corresponds to the
subset of chains of pure partitions with $m_q>0$.  The claim follows.
\end{proof}

\section{Bredon homology of the partition complex}

In this section we complete the proof that the ring $\Gamma$ of power
operations for Morava $E$-theory is Koszul, by
applying a result of Arone, Dwyer, and Lesh
\cite{arone-dwyer-lesh-bredon-partition}, by proving that the
simplicial abelian group $\wt{Q}(\ol{P}_m)$ has homology only in
degree $m$.

\begin{rem}
The original version of this paper contained a extensive proof of this
fact, which can be viewed as an elaborate generalization of the
argument of Solomon and Tits \cite{solomon-steinberg-character} on the
homotopy type of the Tits building of a BN-pair, as realized in the
case of the group $G=GL(k,\mathbb{F}_p)$ with its usual BN-structure.
The author is rather fond of the original proof, but there is no
reason to reproduce it here in light of the Arone-Dwyer-Lesh result.
It can be found in the first (version 1) arXiv posting of this paper.  
\end{rem}

\subsection{Bredon homology of partition complexes}

We first identify a special case of the theorem of
Arone-Dywer-Lesh, which we will apply to our calculation. 
\begin{prop}[Special case of
  \cite{arone-dwyer-lesh-bredon-partition}*{Cor.\ 1.2}]
\label{prop:adl-special-case}
Let $m=p^k$ for some $k\geq 1$, and let  $M$  be a Mackey functor for
$G=\Sigma_m$ taking values 
in  $\Z_{(p)}$-modules, 
which satisfies the following hypotheses.
\begin{enumerate}
\item [(i)] For all finite $G$-sets $X$ and $Z$ such that the order of
  $Z$ is prime to $p$, the composite
\[
M(X) \ra M(X\times Z)  \ra M(X)
\] 
is an isomorphism, where  the maps in question are respectively the
contravariant and covariant maps induced by the projection $X\times
Z\ra X$ of finite $G$-sets.
\item [(ii)] If $H\leq G$ is a subgroup which acts non-transitively on
  $\ul{m}=\{1,\dots, m\}$, then $M(G/H)\approx 0$.
\end{enumerate}
Then there are isomorphisms
\[
\wt{H}^{\Sigma_m}_j(P_m^\diamond; M) \approx 
\begin{cases}
  0 & \text{if $j\neq k-1$,}
\\
  M(\Sigma_m/V) \otimes_R \mr{St}_k & \text{if $j=k-1$,}
\end{cases}
\]
where $V\leq \Sigma_m$ is a subgroup isomorphic to $(\Z/p)^k$,
$R=\Z_{(p)}[\operatorname{Aut}(V)]=\Z_{(p)}[GL_k(\mathbb{F}_p)]$, and
$\mr{St}_k$ is the Steinberg representation of $GL_k(\mathbb{F}_p)$.  
\end{prop}

Recall the transitive $E$-homology functor $Q$ of the previous
section.  Restricted to finite $G$-sets, it defines a Mackey functor,
which we also denote by $Q$.  
\begin{prop}\label{prop:iso-to-bredon}
For a based simplicial  set $X$ with $G=\Sigma_m$ action, the homology
of the simplicial 
abelian group $\wt{Q}(X)$ is precisely
\emph{(reduced) Bredon homology} of $X$ with coefficients in $Q$:
\[
H_*\wt{Q}(X) \approx \wt{H}^{\Sigma_m}_*(X;Q).
\]
\end{prop}
\begin{proof}
This is simply the definition of Bredon homology of a $G$-simplicial set.
\end{proof}

We obtain the result we need as a corollary of
\eqref{prop:adl-special-case}.
\begin{cor}\label{cor:bredon-vanishing}
For $m=p^k$ and $k\geq1$ we have that
\[
H_j\wt{Q}(X)= \wt{H}^{\Sigma_m}_j(\ol{P}_m;Q) =0\qquad \text{if $j\neq k$.}
\]
\end{cor}
\begin{proof}
As $\ol{P}_m=P_m/P_m^\diamond$ and $P_m$ is equivariant contractible,
we see that $\ol{P}_m$ is equivalent to the reduced suspension of
$P_m^\diamond$, so $H_j^{\Sigma_m}(\ol{P}_m; Q)=
H_{j-1}^{\Sigma_m}(P_m^\diamond; Q)$.  The claim then follows from
\eqref{prop:adl-special-case} once we see that $Q$ satisfies (i) and
(ii).  

For condition (i),  recall that $Q$ is defined as a quotient of  the
Mackey functor $X\mapsto \cE{*}(X_{\Sigma_m})$, which also satisfies
property (i), since the composite $X_{h\Sigma_m}\ra (X\times
Z)_{h\Sigma_m}\ra X_{h\Sigma_m}$ of transfer followed by projection is
a $p$-local equivalence when the order of $Z$ is prime to $p$.  

Condition (ii) is satisfied by $Q$ by construction.
\end{proof}

Next we derive our special case from the theorem stated in
\cite{arone-dwyer-lesh-bredon-partition}.  
\begin{proof}[Proof of \eqref{prop:adl-special-case}]
This is a special case of
\cite{arone-dwyer-lesh-bredon-partition}*{Cor.\ 1.2}, whose conclusion
is precisely that stated in \eqref{prop:adl-special-case}, depending on
hypotheses on a $Z_{(p)}$-Mackey functor $M$ stated as (1)--(3) of
\cite{arone-dwyer-lesh-bredon-partition}*{Cor.\ 1.2}.  They prove that
our condition
(i) implies their condition (1); this is stated as
\cite{arone-dwyer-lesh-bredon-partition}*{Lemma 3.8}.  Their
 (2) and (3) are conditions on the values of $M(\Sigma_m/D)$ where
$D\leq \Sigma_m$ is an elementary abelian subgroup which acts freely
and non-transitively on $\ul{m}=\{1,\dots,m\}$.  These conditions (2)
and (3) are trivially implied by our (ii), since that implies
$M(\Sigma_m/D)=0$.  
\end{proof}

\begin{rem}
Kathryn Lesh has pointed out  that the special case
\eqref{prop:adl-special-case} really 
is the ``easy case'' of the machinery of
\cite{arone-dwyer-lesh-bredon-partition}.  In particular, the proof
of the special case we use can be extracted entirely from sections 1--6 and
10 of their paper (the proof of the main theorem being completed in
section 10).  The key point is the hypothesis (in
\cite{arone-dwyer-lesh-bredon-partition}*{Prop.\ 5.4}) that certain
collections of subgroups are ``homologically $\mu(G/D)$-ample'', which
is a vanishing condition on the homology of a certain space with
$G$-action, with twisted coefficients $\mu(G/D)$.  

In the case relevant to the proof of their theorem, $\mu(G/D)=M(G/D)$
with $D\leq G=\Sigma_m$ a non-transitive elementary $p$ group.
Our condition (ii) ensures that $M(G/D)=0$ and thus this hypothesis of
their Prop.\ 5.4 is trivially satisfied.  Sections 7--9 of their paper
do the hard work of showing that their sharper conditions (2) and (3)
given rise to the needed hypothesis; these aren't needed if we only
need the special case we use.

\end{rem}

\subsection{Proof of the main theorem}
\label{subsec:proof-of-main-thm}

Now we can give the proof of our main theorem.
\begin{proof}[Proof of \eqref{main-theorem}]
Recall that the graded ring $\Gamma$ is isomorphic to the graded ring
$\Delta$ \eqref{cor:gamma-delta-iso-free}, so it suffices to show that
$\Delta$ is Koszul.  Recall
\eqref{subsec:koszul-def} that $\Delta$ is Koszul  if
$H_q(\rB(\Delta)[m])=0$ for $q<m$.  By
\eqref{cor:fundamental-observation-cor} there is are isomorphisms of
simplicial abelian groups $\rB(\Delta)[k]\approx \wt{Q}(\ol{P}_m)$.
By \eqref{prop:iso-to-bredon} the homology of this simplicial abelian
group is the Bredon homology $\wt{H}_*^{\Sigma_m}(\ol{P}_m; Q)$ with
coefficients in transitive homology Mackey functor $Q$, and the
desired vanishing is \eqref{cor:bredon-vanishing}. 
\end{proof}

Finally, we prove that the Koszul resolution
\eqref{subsec:koszul-resolutions}  associated to $\Gamma$ (or
$\Delta$) has
length $n+1$, where $n$ is the height of the formal group associated
to $E$.
\begin{prop}\label{prop:koszul-vanishing}
For a height $n$-Morava $E$-theory, we have that $C[k]:=
H_k\rB(\Delta)[k] \approx H_k^{\Sigma_{p^k}}(\ol{P}_{p^k};Q)\approx
0$, and thus any $\Gamma$-module $N$ which is flat (resp.\ projective)
over $E_0$ admits a flat (resp.\ projective) $\Gamma$-module
resolution of length $n+1$.
\end{prop}
\begin{proof}
Recall \eqref{prop:koszul-ranks} that since $\Delta$ is Koszul,  each
$C[k]$ is finitely generated and projective as a left $E_0$-module
(and thus is free since $E_0$ is a local ring).  We have
\begin{align*}
  \sum_{k=0}^\infty \rank C[k]\cdot T^k 
&= \bigl( \sum_{k=0}^\infty (-1)^k (\rank \Delta[k])\cdot
  T^k\bigr)^{-1} 
& \text{by \eqref{prop:koszul-ranks}}
\\
&= \bigl(\sum_{k=0}^\infty \rank \Gamma[k] \cdot (-T)^k \bigr)^{-1}
& \text{using $\Gamma[k]\approx \Delta[k]$ \eqref{cor:gamma-delta-iso-free}}
\\
&= (1+T)(1+pT)\cdots (1+p^{n-1}T)
& \text{by \eqref{prop:gamma-ranks},}
\end{align*}
whence $\rank C[k]=0$ if $k>n$.  
\end{proof}

\begin{rem}
This actually  shows the ranks of the $C[k]$ are ``Gaussian binomial
coefficients'': 
\[
\rank C[k] = \len{\{\text{subspaces $V\subset \F_p^n$ with $\dim V =
    k$}\}}. 
\]
\end{rem}

\begin{bibdiv}
\begin{biblist}
\bib{ando-isogenies-and-power-ops}{article}{
  author={Ando, Matthew},
  title={Isogenies of formal group laws and power operations in the cohomology theories {$E\sb n$}},
  journal={Duke Math. J.},
  volume={79},
  date={1995},
  number={2},
  pages={423--485},
  issn={0012-7094},
}

\bib{ando-hopkins-strickland-h-infinity}{article}{
  author={Ando, Matthew},
  author={Hopkins, Michael J.},
  author={Strickland, Neil P.},
  title={The sigma orientation is an $H\sb \infty $ map},
  journal={Amer. J. Math.},
  volume={126},
  date={2004},
  number={2},
  pages={247--334},
  issn={0002-9327},
}

\bib{arone-dwyer-lesh-bredon-partition}{article}{
  author={Arone, G. Z.},
  author={Dwyer, W. G.},
  author={Lesh, K.},
  title={Bredon homology of partition complexes},
  journal={Doc. Math.},
  volume={21},
  date={2016},
  pages={1227--1268},
  issn={1431-0635},
}

\bib{arone-mahowald-identity-functor}{article}{
  author={Arone, Greg},
  author={Mahowald, Mark},
  title={The Goodwillie tower of the identity functor and the unstable periodic homotopy of spheres},
  journal={Invent. Math.},
  volume={135},
  date={1999},
  number={3},
  pages={743--788},
  issn={0020-9910},
}

\bib{barthel-frankland-completed-approximation}{article}{
  author={Barthel, Tobias},
  author={Frankland, Martin},
  title={Completed power operations for Morava $E$-theory},
  journal={Algebr. Geom. Topol.},
  volume={15},
  date={2015},
  number={4},
  pages={2065--2131},
  issn={1472-2747},
}

\bib{bmms-h-infinity-ring-spectra}{book}{
  author={Bruner, R. R.},
  author={May, J. P.},
  author={McClure, J. E.},
  author={Steinberger, M.},
  title={$H\sb \infty $ ring spectra and their applications},
  series={Lecture Notes in Mathematics},
  volume={1176},
  publisher={Springer-Verlag},
  place={Berlin},
  date={1986},
  pages={viii+388},
  isbn={3-540-16434-0},
}

\bib{goerss-hopkins-moduli-spaces}{article}{
  author={Goerss, P. G.},
  author={Hopkins, M. J.},
  title={Moduli spaces of commutative ring spectra},
  conference={ title={Structured ring spectra}, },
  book={ series={London Math. Soc. Lecture Note Ser.}, volume={315}, publisher={Cambridge Univ. Press}, place={Cambridge}, },
  date={2004},
  pages={151--200},
}

\bib{johnson-mccarthy-calculus-cotriples}{article}{
  author={Johnson, B.},
  author={McCarthy, R.},
  title={Deriving calculus with cotriples},
  journal={Trans. Amer. Math. Soc.},
  volume={356},
  date={2004},
  number={2},
  pages={757--803 (electronic)},
  issn={0002-9947},
}

\bib{kashiwabara-k2-homology}{article}{
  author={Kashiwabara, Takuji},
  title={$K(2)$-homology of some infinite loop spaces},
  journal={Math. Z.},
  volume={218},
  date={1995},
  number={4},
  pages={503--518},
  issn={0025-5874},
}

\bib{may-general-algebraic-approach}{article}{
  author={May, J. Peter},
  title={A general algebraic approach to Steenrod operations},
  conference={ title={The Steenrod Algebra and its Applications (Proc. Conf. to Celebrate N. E. Steenrod's Sixtieth Birthday, Battelle Memorial Inst., Columbus, Ohio, 1970)}, },
  book={ series={Lecture Notes in Mathematics, Vol. 168}, publisher={Springer, Berlin}, },
  date={1970},
  pages={153--231},
}

\bib{mcclure-dyer-lashof-k-theory}{article}{
  author={McClure, James E.},
  title={Dyer-Lashof operations in $K$-theory},
  journal={Bull. Amer. Math. Soc. (N.S.)},
  volume={8},
  date={1983},
  number={1},
  pages={67--72},
  issn={0273-0979},
}

\bib{polishchuk-positselski-quadratic-algebras}{book}{
  author={Polishchuk, Alexander},
  author={Positselski, Leonid},
  title={Quadratic algebras},
  series={University Lecture Series},
  volume={37},
  publisher={American Mathematical Society},
  place={Providence, RI},
  date={2005},
  pages={xii+159},
  isbn={0-8218-3834-2},
}

\bib{priddy-koszul-resolutions}{article}{
  author={Priddy, Stewart B.},
  title={Koszul resolutions},
  journal={Trans. Amer. Math. Soc.},
  volume={152},
  date={1970},
  pages={39--60},
  issn={0002-9947},
}

\bib{rezk-dyer-lashof-example}{article}{
  author={Rezk, Charles},
  title={Power operations for Morava $E$-theory of height $2$ at the prime $2$},
  date={2008},
  eprint={arXiv:0812.1320 (math.AT)},
}

\bib{rezk-congruence-condition}{article}{
  author={Rezk, Charles},
  title={The congrugence criterion for power operations in Morava $E$-theory},
  journal={Homology, Homotopy Appl.},
  volume={11},
  date={2009},
  number={2},
  pages={327--379},
  issn={1532-0073},
  eprint={arXiv:0902.2499},
}

\bib{rezk-modular-complexes}{article}{
  author={Rezk, Charles},
  title={Modular isogeny complexes},
  journal={Algebraic \& Geometric Topology},
  volume={12},
  date={2012},
  number={3},
  pages={1373--1403},
  eprint={:arXiv:1102.5022},
}

\bib{rezk-power-ops-structure-calculation}{article}{
  author={Rezk, Charles},
  title={Power operations in Morava $E$-theory: structure and calculations},
  date={2013},
  eprint={http://www.math.uiuc.edu/~rezk/power-ops-ht-2.pdf},
}

\bib{solomon-steinberg-character}{article}{
  author={Solomon, Louis},
  title={The Steinberg character of a finite group with $BN$-pair},
  conference={ title={Theory of Finite Groups (Symposium, Harvard Univ., Cambridge, Mass., 1968)}, },
  book={ publisher={Benjamin, New York}, },
  date={1969},
  pages={213--221},
}

\bib{strickland-finite-subgroups-of-formal-groups}{article}{
  author={Strickland, Neil P.},
  title={Finite subgroups of formal groups},
  journal={J. Pure Appl. Algebra},
  volume={121},
  date={1997},
  number={2},
  pages={161--208},
  issn={0022-4049},
}

\bib{strickland-morava-e-theory-of-symmetric}{article}{
  author={Strickland, N. P.},
  title={Morava $E$-theory of symmetric groups},
  journal={Topology},
  volume={37},
  date={1998},
  number={4},
  pages={757--779},
  issn={0040-9383},
  eprint={arXiv:math/9801125},
}

\bib{zhu-power-ops-at-3}{article}{
  author={Zhu, Yifei},
  title={The power operation structure on Morava $E$-theory of height 2 at the prime 3},
  journal={Algebr. Geom. Topol.},
  volume={14},
  date={2014},
  number={2},
  pages={953--977},
  issn={1472-2747},
}

\bib{zhu-modular-equations-lubin-tate}{article}{
  author={Zhu, Yifei},
  title={Modular equations for Lubin-Tate formal groups at chromatic level 2},
  date={2015},
  eprint={arXiv:1508.03358},
}

\end{biblist}
\end{bibdiv}

\end{document}